\theoremstyle{plain}
\newtheorem{theorem}{Theorem}
\newtheorem{lemma}[theorem]{Lemma}
\newtheorem{claim}[theorem]{Claim}
\newtheorem{corollary}[theorem]{Corollary}
\theoremstyle{definition}
\newtheorem{definition}[theorem]{Definition}
\newtheorem{remark}[theorem]{Remark}
\newtheorem{conjecture}{Conjecture}
\definecolor{lightblue}{rgb}{0.5,0.5,1.0}
\definecolor{darkred}{rgb}{0.5,0,0}
\definecolor{darkgreen}{rgb}{0,0.5,0}
\definecolor{darkblue}{rgb}{0,0,0.5}
\definecolor{gray}{gray}{0.3}
\providecommand*{\toclevel@algorithm}{0}
\newcommand{\leftmset}{\{\!\!\{}
\newcommand{\rightmset}{\}\!\!\}}
\DeclareMathOperator{\maxdeg}{\Delta}
\DeclareMathOperator{\bulk}{bulk}
\DeclareMathOperator{\Span}{span}
\DeclareMathOperator{\nucl}{coat}
\DeclareMathOperator{\partition}{part}
\DeclareMathOperator{\Vmaxdeg}{\mathit{V}_{\Delta}}
\newcommand{\NTildeZero}[2]{\Span_{#1}{(#2)}}
\newcommand{\NTildeOne}[2]{{N}_{#1}(\Span{(#2)})}
\newcommand{\NTildeTwo}[2]{\overline{N_{#1}\left[\Span{(#2)}\right]}}
\newcommand{\NTwo}[2]{\overline{N_{#1}\left[{#2}\right]}}
\title{Interval Graphs are Reconstructible}
\author{Irene Heinrich \\
{\tt heinrich@mathematik.tu-darmstadt.de}
 \and 
Masashi Kiyomi\\
{\tt kiyomi@st.seikei.ac.jp}
 \and Yota Otachi, \\
{\tt otachi@nagoya-u.jp}
\and 
Pascal Schweitzer \\ 
{\tt schweitzer@mathematik.tu-darmstadt.de}
}
\begin{document}

\maketitle

\begin{abstract}
A graph is reconstructible if it is determined up to isomorphism by the multiset of its proper induced subgraphs. The reconstruction conjecture postulates that every graph of order at least~3 is reconstructible.

We show that interval graphs with at least three vertices are reconstructible.
For this purpose, we develop a technique to handle separations in the context of reconstruction. This resolves a major roadblock to using graph structure theory in the context of reconstruction.
To apply our novel technique, we also develop a resilient combinatorial structure theory for interval graphs. 

A consequence of our result is that interval graphs can be reconstructed in polynomial time.

\end{abstract}
\section{Introduction}

Due to its fundamental nature, the reconstruction conjecture is one of the most prominent open problems in graph theory~\cite{Lauri2016}. 
First posed by Kelly and Ulam in 1942~\cite{Kelly57}, it inquires to what extent the structure of a graph is determined by its parts. Intuitively it asks whether the~$(n-1)$-vertex induced subgraphs of a graph~$G$ of order~$n$ determine the graph~$G$ up to isomorphism.

More formally, for a vertex~$v\in V(G)$ of a finite simple undirected graph~$G$, we define the \emph{card}~$G_v$ to be the class $[G-v]_{\cong}$ of graphs isomorphic to $G-v$. Thereby, a graph of order~$n$ gives rise to a multiset of~$n$ cards~$\leftmset G_v\colon v\in V(G)\rightmset$, which
is
called the \emph{deck} of~$G$.
The central question is now whether non-isomorphic graphs can have the same deck. A small example consists of
the complete graph on two vertices and its complement.
The deck of each of these graphs consists of two 1-vertex graphs.

The reconstruction conjecture postulates that this is the only counterexample in the sense that every graph of order at least 3 is determined (up to isomorphism) by its deck.

The conjecture succinctly captures
the fundamental question of whether the global structure of a graph is determined by its induced substructures. 

Since it is readily stated, many graph theorists have approached the conjecture over time. Yet, despite 70 years of research, the reconstruction conjecture remains wide open and one of the most tenacious unresolved problems in structural graph theory. 
Overall this highlights gaps in our understanding of graph structure theory. Indeed, the conjecture remains open even for many basic graph classes that are generally considered well understood (see related work below).

The fundamental question has long attracted significant interest (we refer to surveys and books~\cite{MR360368,MR0480189,bondy_1991,BabaiHandbook,LauriInHandbook,DBLP:journals/arscom/AsciakFLM10,Lauri2016}). It also has numerous applications on a broad range of domains. These include finite model theory~\cite{DBLP:journals/jgt/EgrotH22,tuprints26387}, machine learning~\cite{DBLP:journals/tnn/Shawe-Taylor93,DBLP:journals/pami/BouritsasFZB23,DBLP:conf/nips/CottaMR21}, isomorphism problems~\cite{Zemlyachenko1985}, and complexity questions surrounding them~\cite{DBLP:journals/mst/KratschH94,HEMASPAANDRA2007103,Kiyomi2010,DBLP:journals/jct/Huber11,DBLP:journals/cc/HemaspaandraHSW20}. 
Related forms of reconstruction problems arise for other combinatorial objects~\cite{DBLP:journals/jct/KrasikovR97,e85c79e0-d93c-36bd-9f89-07652a5d63a3,DBLP:journals/jct/AlonCKR89}, and also in related topics such as constraint satisfaction problems~\cite{DBLP:journals/siamdm/MontanariRT11} and topology~\cite{PITZ_SUABEDISSEN_2017}. Non-discrete variants are related to computed tomography (CT) (see~\cite{doi:10.1179/1743280413Y.0000000023}).

At first sight, the conjecture may seem obviously true. However, Stockmeyer found an infinite family of tournament pairs which show that for directed graphs the conjecture is false~\cite{https://doi.org/10.1002/jgt.3190010108}. This rules out various approaches to reconstruction that do not take the undirected nature of simple graphs into account.

This may explain why, even today, there seems to be no clear consensus on whether to believe in the conjecture. Another contributing factor could be that, in general, progress towards resolving the conjecture has been slow. Recent research has focused on variants and related aspects of reconstruction (see related work below), but regarding the original problem only limited insights have been reported. 

Let us discuss which graph classes are known to be reconstructible.
All graphs of order at least~3 in the following natural classes are reconstructible:
trees~\cite{Kelly57},
separable graphs without degree-1 vertices~\cite{Bondy69},
outerplanar graphs~\cite{Giles1974},
regular graphs (see~\cite{MR0480189}),
unit interval graphs~\cite{DBLP:journals/dm/Rimscha83},
non-trivial Cartesian products, and
bipartite permutation graphs \cite{kiyomi2012}. 
We should highlight the fact that, apart from the 2012 result on bipartite permutation graphs, all of the other results are
 more than 40 years old.

Recently it has also been shown computationally that 
graphs with up to 13 vertices are reconstructible~\cite{MR4446370}. 

In this paper, we take a novel approach to the reconstruction conjecture using isomorphism-related concepts and structure theory. Indeed, we are interested in showing larger graph classes to be reconstructible. For this, we develop techniques to handle separations in the context of reconstruction. We then apply our techniques 
to the class of interval graphs.
These are known to be recognizable~\cite{DBLP:journals/dm/Rimscha83} (i.e., we can infer from the deck whether the graph is an interval graph). However, they were only known to be reconstructible in the special case of unit length intervals. 
Using our new techniques, we prove that interval graphs are reconstructible.

\begin{theorem}\label{main:thm}
Every interval graph on at least three vertices is reconstructible.
\end{theorem}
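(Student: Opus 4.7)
My plan is to first reduce the theorem to a hard case using classical invariants, then exploit the clique-path structure of interval graphs to reconstruct from a well-chosen card, and finally resolve the pendant-heavy case by developing a new separation argument adapted to interval graphs.

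First, I would extract standard deck invariants. By Rimscha's recognition theorem we may assume the graph $G$ is an interval graph, and Kelly-style counting arguments recover connectedness, the degree sequence, the multiset of connected components, and the number of induced copies of each proper subgraph. Disconnected interval graphs can be handled componentwise, combined with multiplicity reconstruction via card counting. For a connected graph with a cut vertex and \emph{no} pendant, Bondy's theorem for separable graphs without degree-1 vertices settles the case. Thus the residual task is to reconstruct connected interval graphs that are either $2$-connected or contain at least one pendant vertex.

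Second, I would exploit the fact that every connected interval graph admits an essentially unique clique-path $C_1,\ldots,C_k$ in which the cliques containing any fixed vertex form a contiguous subsequence. The isomorphism type of $G$ is then determined by this sequence together with the distribution of twin classes and simplicial vertices across the cliques. The idea is to single out a distinguished \emph{extremal} vertex $v^{\ast}$ — typically a simplicial vertex in the first or last clique of the clique-path — such that (i)~the card $G_{v^{\ast}}$ can be identified in the deck by a combinatorial signature (extremal cliques in interval graphs are recognizable), and (ii)~there is a unique way to reinsert $v^{\ast}$ into $G - v^{\ast}$ producing an interval graph whose full deck matches the given one. For $2$-connected interval graphs, this together with Kelly's lemma applied to small rooted substructures should suffice.

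The main obstacle is the pendant-heavy case, and it is where I expect the novel separation technique announced in the introduction to be essential. A single pendant contributes almost no local information to its card, and deleting one of several pendants attached to the same vertex yields a graph indistinguishable from deleting any of its siblings; so the deck only reveals aggregate pendant multiplicities, not their precise placement. To overcome this, I would select a separating maximal clique $S$ on the clique-path and argue that each \say{side} of $G$ relative to $S$, viewed as a labeled structure rooted at $S$, is determined by the deck. Twin-class sizes, pendant counts, and the assignment of these to individual cliques would then be pinned down by an inclusion–exclusion count over cards, using the resilient combinatorial invariants of interval graphs that the paper develops (e.g.\ the clique-path skeleton together with auxiliary data at each clique). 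The technical heart of the argument, and the step where the classical Bondy strategy fails, is to formulate a notion of \say{side of a separation at $S$} that is itself robust under the deletion of an arbitrary single vertex — precisely the structural-graph-theory obstacle that has historically blocked attacks on the reconstruction conjecture.
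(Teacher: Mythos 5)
There is a genuine gap: your proposal correctly identifies \emph{where} the difficulty lies (locating a separation in the cards and reinserting a deleted vertex), but the two claims on which your whole plan rests are precisely the content of the paper and are not supplied. First, you assert that the card $G_{v^{\ast}}$ of an extremal simplicial vertex ``can be identified in the deck by a combinatorial signature'' because ``extremal cliques in interval graphs are recognizable.'' The clique-path and its end cliques are not robust under deleting one vertex: removing a vertex can merge cliques, lower the maximum degree, change which cliques are extremal, and --- because of the left--right symmetry of interval representations --- even when you find a card that came from an extremal vertex you generally cannot tell from which end it was deleted, nor match the ends of $G_v$ to the ends of $G$. The paper's entire structure theory (bulk, flanks, outsiders, the sets $\mathcal{E}$, $Q^{1}$, $Q^{\geq 2}$, and the reconstruction of the flank sizes) exists to manufacture a notion of ``extremal'' that is combinatorially defined, isomorphism-invariant, and resilient to one deletion; this machinery cannot be waved in via the clique path. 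Second, you assert that ``there is a unique way to reinsert $v^{\ast}$'' --- but that is essentially a restatement of the reconstruction problem. The paper's resolution is specific: the separator is a clique whose neighborhoods into each side are \emph{linearly ordered by inclusion} (a clean clique separation, Lemma~\ref{lem: clean implies linearly ordered nbhds}), so that two separator vertices with the same degree annotation and the same degree in a part are twins, which is exactly what makes the reassembly in Lemma~\ref{lem:reconst:clique:sep:implies:reconst} and the reattachment in Lemma~\ref{lemma:farthest:away:and:at:least:two:reconstruct:other:side} forced up to isomorphism. Without isolating that linear-ordering property, uniqueness of reinsertion does not follow.

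Two smaller points. The reduction via Bondy's theorem (separable graphs without degree-$1$ vertices) does not simplify the problem in the way you hope: $2$-connected interval graphs are not an easy subcase --- the paper's hardest configurations (no flanks, singleton outsider classes, a unique maximum-degree vertex) occur for $2$-connected graphs --- and ``Kelly's lemma applied to small rooted substructures'' counts subgraphs but does not locate them, so it cannot by itself pin down how the extremal vertex attaches. And the pendant-heavy discussion conflates the easy part (pendant multiplicities, which are indeed countable) with the hard part (which the paper handles not by inclusion--exclusion but by choosing the deleted vertex from the \emph{smaller} side, or from a vertex ``farthest'' from the separator, and by cross-checking several cards). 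As written, the proposal is a reasonable research plan but not a proof.
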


We should remark that when it comes to which graphs are challenging, the reconstruction conjecture behaves differently to many typical graph theoretical problems.
Indeed, note that reconstructibility of regular graphs is trivial. However, conversely, showing reconstructibility of trees required effort. Overall it is generally not clear how to apply graph structure theory to the problem.
In fact, when it comes to classes that do not have built-in regularity, we provide one of the largest classes of graphs for which reconstructibility is known.

As a direct consequence, by combining our theorem with previous work on problems related to decks~\cite{Kiyomi2010}, we conclude that reconstruction of interval graphs is efficiently possible.
\begin{corollary}
Interval graphs can be reconstructed in polynomial time.
\end{corollary}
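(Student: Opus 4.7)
The plan is to combine Theorem~\ref{main:thm} with known polynomial-time machinery for interval graphs. Given the deck $D$ of an interval graph $G$ on $n\geq 3$ vertices, the aim is to output some graph isomorphic to $G$. By Theorem~\ref{main:thm}, any graph realizing the deck $D$ is isomorphic to $G$, so producing \emph{any} graph with deck $D$ is enough; uniqueness will do the correctness work for us.

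First I would fix an arbitrary card $C\in D$. Since $C\cong G-v$ for some $v\in V(G)$ and interval graphs are closed under induced subgraphs, $C$ is itself an interval graph on $n-1$ vertices. Using a standard linear-time recognition algorithm I would compute an interval representation of $C$. The removed vertex $v$ of $G$ then corresponds to one additional interval to be reinserted into this representation.

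Next, I would enumerate all combinatorially distinct placements of the new interval. Its two endpoints can lie in any of the $O(n)$ gaps between the endpoints of intervals of $C$, so there are only $O(n^2)$ essentially different extensions, yielding a polynomial-size list of candidate interval graphs $H_1,\dots,H_k$ on $n$ vertices. For each $H_i$ I would compute its deck and test equality with $D$ as a multiset of isomorphism classes; this is feasible because isomorphism of interval graphs can be decided in polynomial (in fact linear) time, and each candidate deck contains $n$ cards of size $n-1$. By Theorem~\ref{main:thm}, exactly one $H_i$ passes the test, and that graph is the reconstruction.

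The main obstacle I anticipate is not algorithmic efficiency but completeness of the enumeration: one must argue that every interval graph $H$ with $H-v\cong C$ arises (up to isomorphism) from the candidate list, despite the potentially many interval representations $C$ admits. This is exactly the kind of issue handled by the algorithmic infrastructure for deck-related problems on interval graphs developed in~\cite{Kiyomi2010}, which I would invoke to legitimize the enumeration and comparison step. With that input, the overall procedure runs in polynomial time, yielding the corollary.
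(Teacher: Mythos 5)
Your argument is, in substance, the paper's own: Theorem~\ref{main:thm} reduces the task to producing \emph{any} graph with the given deck, and the polynomial-time deck machinery for interval graphs in~\cite{Kiyomi2010} (preimage construction, deck checking) supplies that graph, which is exactly the one-line justification the paper gives. The intermediate step of enumerating extensions of a single fixed interval representation of one card is both incomplete as stated (an interval representation of $H$ restricted to $C$ need not be the representation you computed, so candidates can be missed) and unnecessary, since you ultimately defer to~\cite{Kiyomi2010} anyway.
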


\paragraph{Techniques.} To prove the theorem, we develop three key techniques: (1) a reconstruction method based on separations, (2) a suitable method to annotate structural information in parts of the graph, and (3) a strategy to identify resilient parts of the graph that serve as anchors for reconstruction. 

We believe these techniques have applications beyond our main result. In particular, we believe that our techniques for reconstruction may be adapted to graphs of bounded treewidth and beyond. In any case, our technique is the first method for reconstruction that is compatible with traditional methods of decomposing a graph with a separator of more than one vertex. This resolves a major roadblock to using graph structure theory in the context of reconstruction.

We also develop a combinatorial structure theory for interval graphs that may have applications beyond reconstruction. The important property is that the structure is, to some extent, resilient to vertex deletion.

In addition to the three techniques, we also develop various tricks to handle edge cases. We describe all of these techniques in more detail next. 

\subparagraph{1. Reconstruction-by-separation.}
Our first new technique (called Reconstruction-by-Separation, Lemma~\ref{lem:reconst:clique:sep:implies:reconst}), which is
central to our approach, is to decompose the graph along a structurally simple separator.
The hope is that we can reassemble the graph (uniquely up to isomorphism) from sufficient information on the parts. To make this work, it is clear that we need to understand how vertices in the different parts are interconnected via the vertices in the separator.

This general technique is not specific to interval graphs. In the concrete case of interval graphs, we  separate the graph along a so-called ``clean clique separation'' into two parts. 
To 
reconstruct how
the separator is attached to the parts, we ensure that, within each of the parts, the neighborhoods of the separator-vertices
are linearly ordered by inclusion. 

\subparagraph{2. Annotated graphs and distant vertices.}
Our second technique consists of using annotated graphs and the Distant Vertex Lemma (Lemma~\ref{lemma:farthest:away:and:at:least:two:reconstruct:other:side}).
For this, since the neighborhoods of separator-vertices within the parts are linearly ordered,
in order to reassemble the graph, we only need to know how many neighbors a vertex has within a part. This is the case because, due to the linearly ordered neighborhoods, separator vertices with an equal number of neighbors within a part are adjacent to exactly the same vertices. 
We formalize this information by annotating the graphs that are induced by the parts
as to contain sufficient information on their connections in the rest of the graph. 

To determine the annotation of the graph, we choose cards of vertices that are ``farthest away'' from the separator. Our concept of ``farthest away'' must be defined in a combinatorial, isomorphism-invariant manner, which complicates the matter. 

(For generalizations of our technique to other graph classes, such as graph of bounded threewidth, the linearly ordered neighborhoods would be replaced by an upped bound on the separator.)

\begin{figure}
	\begin{center}
		
		\begin{tikzpicture}[>=stealth]
			
			\draw[thick, LimeGreen] (-0.5,0) -- (2,0);
			\draw[thick, LimeGreen] (-0.5,0.125) -- (2,0.125);
			\draw[thick, LimeGreen] (-0.5,0.25) -- (2,0.25);
			\draw[thick, LimeGreen] (1.5,0.5) -- (4,0.5);
			\draw (-0.5,-0.5) -- (1,-0.5);
			\draw (-0.5,-0.75) -- (0.5,-0.75);
			\draw [decorate,Plum, decoration={brace, amplitude=5pt}](6.75,-1.5) -- node[below=1ex](I0){flank}(5.75,-1.5);
			\draw (2.5,0) -- (4,0);
			\draw (4.5,0) -- (6,0);
			\draw [decorate,Plum, decoration={brace, amplitude=5pt}](1,-1.5) -- node[below=1ex](I0){outsiders}(-0.5,-1.5);
			
			\draw (3.75,-0.25) -- (4.75,-0.25);
			\draw (0.75,-0.25) -- (1.75,-0.25);
			\draw[thick, LimeGreen] (3.5,0.25) -- (5.25,0.25);
			\draw (1,-0.625) -- (0,-0.625);
			\node (v1) at (2.5,2.75) {bordering maximum degree classes};
			\draw[bend right,->] (v1) edge (0.75,0.425);
			\draw[bend left,->] (v1) edge (4.5,0.425);
			\draw (5.75,-0.25) -- (6.75,-0.25);
			\draw[decorate,Plum, decoration={brace, amplitude=5pt}] (-0.5,1.5) -- node[above=1ex](I0){bulk}(5.25,1.5);
			\draw[dashed] (5.5,1.75) -- (5.5,-2.25) node (v3) {};
			\draw[dashed] (1.25,1.75) -- (1.25,-2.25) node (v4) {};
			\node (v2) at (3.5,-3.5) {clean clique separations};
			\draw[->, bend right=10] (v2) edge (v3);
			\draw[->, bend left=10] (v2) edge (v4); 
			\draw (6.75,-0.5) -- (6.25,-0.5);
			\draw[dashed]  (-0.75,0.75) rectangle (7,-1);
			\draw (5,-0.25) -- (5.25,-0.25);
			\draw (4.5,-0.5) -- (5.125,-0.5);
			\draw (5,-0.75) -- (6,-0.75);
		\end{tikzpicture}
	\end{center}
	
	\caption{The figure illustrates our structure theorem on an interval representation shown inside the dashed box. The green thick intervals correspond to maximum degree vertices. While the figure helps to convey the intuition behind the definitions of outsiders, bulk, and flank, crucially, the actual definitions are combinatorial and independent of the interval representation.}\label{fig:generic:structure:int:graph}
\end{figure}
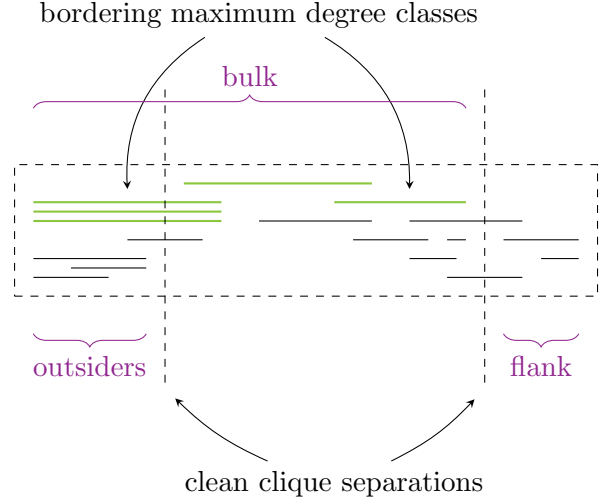

\subparagraph{3. Structure theory for interval graphs.}
The core problem that remains 
is that we need to locate the separator
in the cards. The challenge is that we do not know where a vertex has been deleted.
It seems very difficult to describe
such a
separator
of the graph in combinatorial terms that allow us to locate it in cards without fail. For example, a separator-vertex might have been removed in a card. To address this, we allow the separator under consideration to vary according to the structure of the graph.
Indeed, we identify for interval graphs certain resilient parts of the graph which we can often locate in cards as follows (see Figure~\ref{fig:generic:structure:int:graph}).

We define the \emph{bulk} of the graph as the graph spanned (in a well-defined combinatorial sense) by all the maximal degree vertices. 
If the bulk does not span the entire graph, there will be vertices that are not in the closed neighborhood of the bulk.
These vertices naturally partition into one or two combinatorially well-defined components (intuitively left and right) which we call \emph{flanks}. We can use the common neighbors of a flank and the bulk as the separator, which turns out to induce a clean clique separation.
However, it may be the case that there are no flanks. In this case, we define the so-called \emph{outsiders} which, again in a combinatorially well-defined sense, are certain leftmost or rightmost vertices. Here, we can use the open neighborhood of an outsider class as a separator.

\subparagraph{4. Edge cases.}
The three techniques described so far outline our general approach. However, in  the concrete applications there are several conceptual problems that repeatedly occur, especially in edge cases. Let us, at a high level, describe the crux of the recurring issues.

Intuitively, due to possible left-right symmetries of interval graphs, it is often challenging to determine whether a vertex was deleted on the left or the right ``side'' of the graph. This can happen for example when the side from which the vertex was deleted is exactly one vertex larger than the other side and, after the deletion, the sides look similar.
In this case, our solution is often to delete a vertex from the smaller side.

A similar issue appears when the sides have the same order (and look similar) in the original graph. When a vertex~$v$ is then deleted and we inspect the card~$G_v$, we can distinguish the sides in~$G_v$ due to their size. 
However, we may not be able to infer how the sides of~$G_v$ correspond to the sides of~$G$. To address this issue, we typically use several cards besides~$G_v$ to recover sufficient information by observing that some subgraph is simultaneously present in all the cards considered.

Finally, another issue may be that when a vertex is removed, no vertices are left on a side. It may then be difficult to locate the side within the card at all. This issue typically appears in edge cases and is resolved using case distinctions for concrete situations.

Overall, these issues make many parts of the proof somewhat technical. Many edge cases turn out to require careful attention to detail or different techniques.

\paragraph{Outline.} After some related work (Section~\ref{sec:related:work}) and preliminaries (Section~\ref{sec:prelims}), we develop a structure theory for interval graphs (Section~\ref{sec:structure:theory}). This structure theory discusses clean clique separations and linearly ordered neighborhoods. It also introduces the decomposition of a graph into the bulk, the (possibly empty) flanks, and the vertices connecting the bulk with the flanks. We then discuss annotated subgraphs (Section~\ref{sec:annotated:subgraphs}) and develop two techniques to reconstruct a graph decomposed along a clean clique separation. These techniques are Reconstruction-by-Separation (Lemma~\ref{lem:reconst:clique:sep:implies:reconst}) and the Distant Vertex Lemma (Lemma~\ref{lemma:farthest:away:and:at:least:two:reconstruct:other:side}).

We then argue first that the sizes of the flanks are reconstructible (Section~\ref{sec:flank:sizes}), which
allows us to treat the case with two flanks (Section~\ref{sec: two-flanks}).
We then introduce outsiders (Section~\ref{sec:outsiders}), which allow us to resolve the case with no flanks (Section~\ref{sec:no:flank}). The case with exactly one flank (Section~\ref{sec:one:flank}) is in some sense a combination of the techniques of the other two cases.

\section{Related Work}\label{sec:related:work}

Reconstruction problems have been widely studied in combinatorics and theoretical computer science.
We refer to surveys on the reconstruction conjecture~\cite{MR360368},~\cite{MR0480189},~\cite[Chapters~8--11]{Lauri2016} for a broader context.
In the following, we summarize key results in addition to those mentioned in the introduction.

Various graph invariants are known to be reconstructible such as planarity, connectivity, the characteristic polynomial, and the Tutte polynomial~\cite{MR538033}.

Regarding reductions, the reconstruction conjecture is true if all 2-connected graphs are reconstructible~\cite{Yang88}. A recent strengthening of this involves additional restrictions on the diameter, co-diameter, and triangle-freeness~\cite{MR4658425}.

Many variants of reconstructibility have been considered over the years. These include reconstructibility from smaller subgraphs, for example, cubic graphs are reconstructible from the deck of two-vertex-deleted cards \cite{Kostochka2021}. They also include edge reconstruction, which, for example, is possible for claw-free graphs~\cite{Ellingham1988}. Finally, there are several different versions of switching reconstruction~\cite{MR787322,MR2839371,MR3218273}.

The \emph{reconstruction number} is the minimum number of well-chosen cards that suffice to reconstruct the graph. The \emph{adversary reconstruction number} of a graph is the minimum number~$k$ of cards such that every $k$-multiset of cards uniquely determines the graph. It has been shown that almost every graph has adversary reconstruction number~3~\cite{Bollobas1990, Muller1976, Myrvold1988}.

Motivated by applications in graph machine learning, reconstruction based only on color refinement information (also known as the~1-dimensional Weisfeiler-Leman algorithm) of the cards was recently investigated~\cite{DBLP:journals/corr/abs-2406-09351}.
Finally, there are algebraic formulations of the conjecture~\cite{DBLP:journals/jgt/OliveiraT16}, as well as a formulation for the edge reconstruction conjecture in terms of combinatorial K-theory~\cite{calle2024combinatorialktheoryperspectiveedge}.

Regarding interval graphs, we should mention that there are many efficient algorithms to decide whether a given graph is an interval graph and also compute an interval representation. We refer to the introduction of~\cite{DBLP:journals/siamcomp/KoblerKLV11} for an overview.

\section{Preliminaries}\label{sec:prelims}
All graphs in this paper are finite and without loops or parallel edges.
Let $G$ be a graph. We denote the vertex set and the edge set of $G$ by $V(G)$ and $E(G)$, respectively.
The~\emph{order} of~$G$ is~$|V(G)|$.
By~$\maxdeg(G)$ we denote the maximum degree of~$G$.
We set $\Vmaxdeg(G)\coloneqq \{v\in V(G)\colon \deg_G(v) = \maxdeg(G) \}$.
For two disjoint vertex subsets~$U$ and~$W$ of~$G$, we denote the set of edges with one end in $U$ and the other end in~$W$ by $E(U,W)$.
We use~``$\leftmset$'' and~``$\rightmset$'' to indicate multisets.

\paragraph*{Neighborhoods and neighborhood partitions.} 
By~$N_G(v)\coloneqq \{w\in V(G)\colon \{v,w\} \in E(G) \}$ and~$N_G[v]\coloneqq N_G(v)\cup \{v\}$, we denote the \emph{open} and \emph{closed neighborhood} of~$v$, respectively.
For a set $S \subseteq V(G)$, the set
$N_G[S] \coloneqq \bigcup_{s \in S} N_G[s]$ is the \emph{closed neighborhood} of~$S$,
$N_G(S) \coloneqq N_G[S] \setminus S$ is the \emph{open neighborhood} of~$S$, and
we set~$\NTwo{G}{S}\coloneqq V(G) \setminus N_G[S]$ as the complement of the closed neighborhood.
We define
\[\partition_G(S) \coloneqq (S,N_G(S),\NTwo{G}{S} ).\] Note that $\partition_G(S)$ is an ordered partition of $V(G)$.
A vertex~$v$ of~$G$ is \emph{universal} if $N_G[v] = V(G)$.

\paragraph{Twin equivalence classes.}
Let~$G$ be a graph and let~$u$ and~$v$ be two vertices of~$G$.
If~$N_G(u) = N_G(v)$, then $u$ and $v$ are \emph{true twins}.
If $N_G[u] = N_G[v]$, then $u$ and $v$ are \emph{false twins}.
If $u$ and $v$ are true or false twins, we say that $u$ and $v$ are \emph{twins}.
Note that true twins are non-adjacent and false twins are adjacent.
The relation which consists of all vertex pairs that are twins in~$G$ is an equivalence relation and the equivalence classes are called \emph{twin equivalence classes}.

\paragraph*{Interval graphs.} A graph $G$ is an \emph{interval graph} if there exists a family $\{[\ell_v, r_v] \colon v \in V(G)\}$  of closed intervals over~$\mathbb{R}$, one interval for each vertex of $G$, such that two vertices $u$ and $v$ of~$G$ are adjacent precisely if $[\ell_u, r_u] \cap [\ell_v, r_v] \neq \emptyset$.
In this case, $\{[\ell_v, r_v] \colon v \in V(G) \}$ is an \emph{interval representation} of~$G$.
For a vertex subset $W \subseteq V(G)$ and a given interval representation of $G$, we set $\ell_W \coloneqq \min \{\ell_w\colon w \in W\}$ and $r_W \coloneqq \max \{r_w\colon w \in W \}$.
If $G$ has an interval representation for which all intervals are of the same length, then $G$ is a \emph{unit interval graph}.
If~$G$ has an interval representation for which no interval is properly contained in another interval, then~$G$ is a \emph{proper interval graph}.
A graph is a unit interval graph if and only if it is a proper interval graph (see~\cite{Bogart99, Gardi07, Roberts69}).

A graph is \emph{chordal} if it is free of induced cycles of length at least~4.
A 3-vertex subset $S$ of a graph $G$ is an \emph{asteroidal triple} if $S$ is an independent set of $G$ and for each $v \in S$, removing~$N_G[v]$ from $G$ does not disconnect the two other vertices of $S$.
\begin{lemma}[\cite{Lekkerkerker1962} and \cite{Fulkerson1965}] \label{lem: conditions of interval graphs}
	A graph is an interval graph if and only if it is chordal and has no asteroidal triple.
\end{lemma}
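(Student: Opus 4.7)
The plan is to prove both directions of the characterization. The forward direction I would handle by direct geometric arguments on the interval representation $\{[\ell_v, r_v] \colon v \in V(G)\}$. For chordality, assuming an induced cycle $v_1 v_2 \ldots v_k$ with $k \geq 4$, I would pick the index $i$ minimizing $r_{v_i}$ on the cycle; both cycle-neighbors $v_{i-1}$ and $v_{i+1}$ have intervals meeting $[\ell_{v_i}, r_{v_i}]$ and extending past $r_{v_i}$, so they overlap each other, contradicting that the cycle is induced. For the absence of an asteroidal triple $\{u, v, w\}$, the three intervals are pairwise disjoint and arranged linearly, say with $[\ell_v, r_v]$ in the middle. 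Any $u$-to-$w$ path in $G$ corresponds to a sequence of overlapping intervals whose union spans across $[\ell_v, r_v]$, so some path-vertex has interval meeting $[\ell_v, r_v]$ and hence lies in $N_G(v)$, contradicting connectivity of $u$ and $w$ in $G - N_G[v]$.

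For the backward direction I would invoke the \emph{clique tree} of a chordal graph: there exists a tree $T$ whose nodes are the maximal cliques of $G$ such that, for every $v \in V(G)$, the cliques containing $v$ induce a subtree of $T$. The key reduction is that $G$ is an interval graph if and only if some clique tree of $G$ is a path $C_1, \ldots, C_m$. Indeed, given such a path, assigning to every $v$ the interval $[\min\{i \colon v \in C_i\}, \max\{i \colon v \in C_i\}]$ produces a representation, since adjacency in $G$ corresponds precisely to sharing a maximal clique, which corresponds to overlapping index ranges, which corresponds to overlapping intervals.

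The central and most delicate step is therefore to show that the no-asteroidal-triple hypothesis forces some clique tree of $G$ to be a path. I would argue the contrapositive: suppose $T$ is a clique tree containing a branching node $C^*$ of degree at least three, producing subtrees $T_1, T_2, T_3$ attached at $C^*$. In each $T_i$ I would extract, starting from a leaf clique deep inside $T_i$ and using a maximality argument on adjacent cliques, a vertex $v_i$ whose containing cliques lie strictly inside $T_i$ and avoid $C^*$. The subtree property immediately makes $\{v_1, v_2, v_3\}$ independent. To verify the asteroidal-triple condition I would, for each $i \in \{1,2,3\}$, build a $v_j$-to-$v_k$ walk in $G - N_G[v_i]$ by traversing consecutive cliques along the unique tree path from a clique of $T_j$ through $C^*$ to a clique of $T_k$, picking at each step a vertex shared by the two consecutive cliques.

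The main obstacle is the transit through $C^*$: some vertices of $C^*$ may lie in $N_G[v_i]$, because they might also appear in a clique of $T_i$. The bulk of the technical work is therefore to show that the deep-inside-$T_i$ choice of $v_i$, combined with chordality, guarantees that at each crossing between consecutive cliques along the above tree path there remains at least one shared vertex outside $N_G[v_i]$. This is where one exploits the fact that every neighbor of $v_i$ shares a maximal clique with $v_i$ that sits inside $T_i$, so its subtree in $T$ stretches back into $T_i$ and cannot, if $v_i$ was chosen far enough from $C^*$, consume all separator vertices at the branching clique simultaneously.
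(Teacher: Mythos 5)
The paper does not prove this lemma at all: it is the classical Lekkerkerker--Boland / Fulkerson--Gross characterization, imported as a black box with citations. So your attempt can only be judged on its own merits. Your forward direction is fine: minimizing $r_{v_i}$ over an induced cycle forces the two cycle-neighbours' intervals to share the point $r_{v_i}$, and the connectivity-of-overlapping-intervals argument correctly kills asteroidal triples.

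The backward direction, however, has a genuine logical gap. You want to prove ``AT-free $\Rightarrow$ some clique tree is a path'' by contraposition, but you negate the wrong statement: you start from \emph{one} clique tree $T$ with a branching node $C^*$, whereas the correct hypothesis for the contrapositive is that \emph{no} clique tree of $G$ is a path. These are not equivalent, and the difference is fatal. Take $G=K_{1,4}$ with centre $c$ and leaves $x_1,\dots,x_4$: the maximal cliques are $C_i=\{c,x_i\}$, every tree on $\{C_1,\dots,C_4\}$ is a valid clique tree, and the star with centre $C_1$ is a branching one. Your recipe then produces the independent set $\{x_2,x_3,x_4\}$, yet $K_{1,4}$ is an interval graph and has no asteroidal triple: every $x_j$--$x_k$ path must use $c$, and $c\in N_G[x_i]$. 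So the ``technical work'' you defer --- showing that at each crossing of $C^*$ a shared vertex outside $N_G[v_i]$ survives --- is not merely delicate, it is false for an arbitrary branching clique tree. To repair the argument you would need an extremal choice of the clique tree itself (e.g., one minimizing the number of leaves, or maximizing its longest path) and then show that a branching node of \emph{that} tree yields an asteroidal triple; as written, the central step of your proof does not go through.
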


\paragraph*{Graph reconstruction.}
Let $G$ be a graph. We denote by~$ [G]_{\cong}$ the class of all graphs isomorphic to~$G$.
For each vertex $v$ in $V(G)$, we call~$G_v\coloneqq [G-v]_{\cong}$ a \emph{card} of~$G$.
We will frequently think of~$G_v$ as a canonical representative from~$[G-v]_{\cong}$, so we can talk about its vertices, edges, and similar.
The \emph{deck} of~$G$ is the multiset
\[\mathcal{D}(G) \coloneqq \leftmset G_v \colon v \in V(G) \rightmset.\]
We say that $G$ is \emph{reconstructible} if every graph $H$ with~$\mathcal{D}(G) = \mathcal{D}(H)$ is isomorphic to~$G$.

\begin{conjecture}[Reconstruction conjecture]
	Every graph of order at least~3 is reconstructible.
\end{conjecture}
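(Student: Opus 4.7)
The final statement is the Reconstruction Conjecture, an open problem since 1942; the paper at hand does not claim to resolve it, proving only the interval-graph case. A genuine proof is therefore beyond what can be sketched here, but I can outline the program suggested by the techniques of this paper.

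The plan is to push the separator-based framework (Reconstruction-by-Separation, the Distant Vertex Lemma, and the annotated-subgraph machinery) to progressively richer graph classes, using Yang's reduction that it suffices to reconstruct 2-connected graphs. First I would attack chordal graphs, which always admit a clique-tree decomposition and hence clean clique separations of exactly the kind the paper exploits; the adjustment needed is to replace the essentially linear bulk/flank/outsiders structure with a tree-shaped analogue that remains combinatorially identifiable after the removal of a single vertex. The next target would be graphs of bounded tree-width, where small separators still exist but are no longer cliques, so the annotation used to record cross-separator attachments must track richer patterns than the linearly-ordered neighborhoods that suffice for interval graphs. Further rungs of the ladder might be reached by combining such decompositions with a block-cut analysis and with forbidden-minor structure theorems.

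The hard part, and the reason the conjecture has resisted attack for eight decades, is the absence in an arbitrary graph of a canonical, isomorphism-invariant, deletion-resilient anchor corresponding to the \emph{bulk} of an interval graph. In the interval case, the maximum-degree vertices span such an anchor because the implicit left-to-right ordering of any interval representation makes notions like \emph{farthest from the separator} robust combinatorial objects; no comparable intrinsic geometry is available for general graphs, and Stockmeyer's tournament counterexamples warn that any candidate invariant ignoring the undirected nature of simple graphs is doomed. I therefore expect the main obstacle to be structural rather than merely combinatorial: one would need a new structure theorem identifying a resilient skeleton of an arbitrary graph, together with a strengthened reconstruction-by-separation result tolerating separators that are neither cliques nor of bounded size. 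Absent such breakthroughs, the most honest proposal is the ladder \textbf{chordal} $\to$ \textbf{bounded tree-width} $\to$ \textbf{perfect} $\to$ \textbf{all graphs}, with no expectation that its top is reachable by the methods introduced here.
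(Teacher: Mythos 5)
This statement is the Reconstruction Conjecture itself, which the paper states as an open conjecture and does not prove; there is no proof in the paper to compare your attempt against. You correctly recognize this and do not claim a proof, so there is no error to flag, but equally there is nothing here that establishes the statement: what you offer is a research program, not an argument. For the record, your program is consistent with what the authors themselves suggest (they explicitly mention adapting their separation technique to bounded tree-width, and they cite Yang's reduction to 2-connected graphs in their related-work section), and your identification of the central obstacle --- the lack of a canonical, deletion-resilient anchor playing the role of the bulk outside of classes with an intrinsic linear geometry --- matches the paper's own framing of why interval graphs are tractable. One small caution: clique-tree decompositions of chordal graphs do not automatically yield \emph{clean} clique separations in the paper's sense (that notion is tied to an interval representation), so even the first rung of your ladder would require reworking the definition of the separator rather than merely reusing it. As a verdict on the proof attempt: the statement remains unproven, as expected for an open problem, and your proposal is an honest acknowledgment of that rather than a gap in reasoning.
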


A graph invariant $f$ is \emph{reconstructible}	if $f(G) = f(H)$ for every pair of graphs $G$ and $H$ with~$\mathcal{D}(G) = \mathcal{D}(H)$.
For a given graph $G$, we say that we can \emph{reconstruct} $f(G)$ if $f(G) = f(H)$ for every graph~$H$ with~$\mathcal{D}(H)= \mathcal{D}(G)$. 

A graph class $\mathcal{G}$ is \emph{recognizable} if the invariant of being in the class is reconstructible. It is \emph{reconstructible} if every graph in $\mathcal{G}$ is reconstructible.
A \emph{vertex property} is a function~$f'$ that associates to every pair consisting of a graph~$G$ and a vertex~$v\in V(G)$ a mathematical object~$f'(G,v)$. An example would be the degree~$f'(G,v)\coloneqq \deg_G(v)$.
A vertex property $f'$ is \emph{reconstructible} if the multiset~$\leftmset [(G-v,f'(G,v))]_{\cong}\colon v\in V(G)  \rightmset$ is reconstructible. Here~$[(G-v,f'(G,v))]_{\cong}$ is the isomorphism class of the pair~$(G-v,f'(G,v))$, where
two pairs~$(G_1-v_1,f'(G_1,v_1))$ and~$(G_2-v_2,f'(G_2,v_2))$ are isomorphic 
if there is a bijection from $V(G_1)$ to $V(G_2)$ which restricts to an isomorphism between the first entries and to an isomorphism between the second entries.
For the degree, this means that the first entries are isomorphic graphs and~$\deg_{G_1}(v_1) = \deg_{G_2}(v_2)$.

We want to highlight that there is also a stronger notion that is sometimes useful in reconstruction.
We say that a vertex property $f'$ is \emph{strongly reconstructible} if  $f'(G,v) = f'(H,x)$ whenever $v \in V(G)$, $x \in V(H)$, $\mathcal{D}(G) = \mathcal{D}(H)$, and $G_v = H_x$.

Finally, a map associating with every vertex~$v$ of a graph~$G$ a subset~$f''(G,v)\subseteq V(G-v)$ of the vertices of~$G-v$ (e.g., associating with a vertex all vertices of distance 2) is \emph{reconstructible} if~$\leftmset [(G-v,f''(G,v))]_{\cong}\colon v\in V(G)  \rightmset$ is reconstructible. Intuitively this means that in the card~$G_v$, we can mark the set~$f''(G,	v)$.

We collect some known facts regarding reconstructibility and interval graphs.

\begin{theorem}[Von Rimscha 1983 \cite{DBLP:journals/dm/Rimscha83}]
	The class of interval graphs is recognizable.
\end{theorem}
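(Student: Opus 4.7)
The plan is to show that if $G$ and $H$ share a deck and have order at least $3$, then $G$ is an interval graph iff $H$ is. Since the interval property is hereditary, every card of an interval graph is itself an interval graph, and this is a property that can be directly checked on each card in isolation. Hence if some card in the common deck fails to be an interval graph, neither $G$ nor $H$ is, and we are done. From now on I assume every card is an interval graph; by Lemma~\ref{lem: conditions of interval graphs} the only way for $G$ itself to fail to be an interval graph is to contain either an induced chordless cycle $C_k$ with $k \geq 4$ or an asteroidal triple.

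For the chordless-cycle obstruction I would invoke Kelly's standard counting argument: for every graph $F$ on fewer than $n$ vertices, the number of induced copies of $F$ in $G$ is a reconstructible invariant. Applying this with $F = C_k$ for each $k$ with $4 \leq k \leq n-1$ reconstructs the count of chordless induced cycles of each such length. The one remaining case, an induced $C_n$ spanning all of $V(G)$, is equivalent to $G=C_n$, which is reconstructible because it is equivalent to every card being the path $P_{n-1}$. Thus whether $G$ is chordal is determined by $\mathcal{D}(G)$.

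For the asteroidal-triple obstruction my plan is to combine the same Kelly-type counting with the standing assumption that every card is an interval graph, hence chordal and AT-free. The multiset of independent $3$-vertex induced subgraphs is reconstructible, but whether a particular triple $\{a,b,c\}$ is asteroidal is a global property: it requires the existence of three avoiding paths (one between each pair, in the complement of the third closed neighborhood), and these paths can have length up to roughly $n-3$. I would therefore try to reconstruct the count of asteroidal triples by an inclusion--exclusion over the cards, expressing, for each candidate $T=\{a,b,c\}\subseteq V(G_v)$, the event \emph{``$T$ is asteroidal in $G$''} in terms of \emph{``$T$ is asteroidal in $G_v$''} together with a correction for the avoiding paths that use the deleted vertex $v$; the correction terms are extracted by cross-referencing neighboring cards. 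An alternative, more structural route is to reduce to the finite family of Lekkerkerker--Boland minimal forbidden induced subgraphs and to show case-by-case that none of them can share a deck with an interval graph of the same order.

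The main obstacle I expect is exactly this asteroidal-triple step. Unlike chordless cycles, asteroidal triples are a genuinely global feature whose witnesses need not sit inside any single $(n-1)$-vertex card, so the inclusion--exclusion must reconstruct information about arbitrarily long avoiding paths from purely local information in the deck. The situation is made more tractable by our standing assumption that all cards are already interval graphs, which severely restricts their structure (linearly ordered maximal cliques, Helly intersections), but correctly patching these per-card interval representations into a global representation of $G$, or ruling out any patching, is where the bulk of the technical work lies.
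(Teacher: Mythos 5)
The paper does not actually prove this statement; it imports it from von Rimscha's 1983 paper, so there is no internal proof to compare against. Judged on its own, your proposal has a genuine gap at exactly the point you yourself flag. The reductions you do carry out are fine: heredity lets you assume every card is an interval graph, and Kelly's counting argument (the number of induced copies of any $F$ with $|V(F)|<n$ equals $\frac{1}{\,n-|V(F)|\,}\sum_{v}(\text{number of induced copies of } F \text{ in } G_v)$) handles induced cycles of length at most $n-1$, with the spanning cycle $C_n$ handled separately. In fact, the heredity observation already gives you more than Kelly counting does: if $G$ is not an interval graph but every card is, then every minimal forbidden induced subgraph of $G$ must span all of $V(G)$, so $G$ is itself one of the Lekkerkerker--Boland minimal obstructions. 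This collapses the entire problem to showing that no minimal obstruction has the same deck as an interval graph of the same order.

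That last step is where the proof is missing. Your first route, an inclusion--exclusion that reconstructs the number of asteroidal triples by correcting per-card counts for avoiding paths through the deleted vertex, is not an argument: membership in an asteroidal triple is witnessed by paths of unbounded length, it is not a function of bounded-size induced-subgraph counts, and you give no actual scheme for the ``correction terms extracted by cross-referencing neighboring cards.'' Your second route is the right one but is both misstated and unfinished: the Lekkerkerker--Boland family is not finite (besides the cycles $C_k$, $k\geq 4$, it contains two further infinite families in addition to the sporadic graphs), so ``case-by-case'' must mean a uniform argument for each infinite family --- for instance, exhibiting a reconstructible invariant (edge count, degree sequence, counts of small induced subgraphs) separating each obstruction from every interval graph on the same number of vertices, or showing each obstruction is itself reconstructible. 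None of that verification appears. As it stands the proposal establishes recognizability of chordality but not of the interval property.
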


\begin{lemma}[See~\cite{MR0480189}] \label{lem: reconstruct-degree-sequence}
	If $G$ is a graph of order at least~3, then
	$\leftmset \deg_G(u)\colon u \in V(G) \rightmset$ is reconstructible. For every vertex $v$ of $G$, the degree~$\deg_G(v)$ and $\leftmset \deg_G(u)\colon u \in N_G(v) \rightmset$ are strongly reconstructible.
\end{lemma}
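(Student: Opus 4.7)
The plan is a two-stage argument, both driven by a standard Kelly-style double count. For the first stage, summing the edge counts $|E(G-v)|=|E(G)|-\deg_G(v)$ across the deck gives $(n-2)|E(G)|$, so $|E(G)|$ is a deck-invariant (this is where the hypothesis $n\ge 3$ enters, so that we may divide by $n-2$). Reading off $\deg_G(v)=|E(G)|-|E(G-v)|$ from each card then immediately shows that the multiset $\leftmset\deg_G(u)\colon u\in V(G)\rightmset$ is reconstructible, and since the formula uses only quantities determined by $\mathcal{D}(G)$ and by $G_v$, the degree $\deg_G(v)$ is in fact strongly reconstructible.

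For the multiset of neighbor degrees I would compare the vector of $(G-v)$-degrees with the vector of $G$-degrees: they differ in coordinate $u\neq v$ by exactly $[u\in N_G(v)]\in\{0,1\}$. Let $a_k$ denote the number of vertices of $G-v$ with $(G-v)$-degree $k$ (determined by the card), and let $b_k$ denote the number of vertices $u\neq v$ with $\deg_G(u)=k$ (obtained from the reconstructible global degree multiset by removing one copy of the already-known value $\deg_G(v)$). Writing $n_k$ and $m_k$ for the number of neighbors and non-neighbors of $v$, respectively, with $(G-v)$-degree equal to $k$, the two identities $a_k=n_k+m_k$ and $b_k=m_k+n_{k-1}$ telescope, starting from $n_{-1}=0$, to $n_k=\sum_{j=0}^{k}(a_j-b_j)$. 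Hence the multiset $\leftmset\deg_{G-v}(u)\colon u\in N_G(v)\rightmset$ is determined as a strong invariant of the pair $(G_v,\mathcal{D}(G))$, and shifting each entry by $+1$ yields the required $\leftmset\deg_G(u)\colon u\in N_G(v)\rightmset$.

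The main obstacle is purely bookkeeping rather than conceptual: I have to be careful that the removal of the known value $\deg_G(v)$ from the global degree multiset is unambiguous (which it is, as multiset subtraction of a specified value is well defined), and that the telescoping base case at $k=0$ is handled correctly. No graph structure beyond edge counting and degree arithmetic enters the argument, so the proof applies uniformly to every graph of order at least three and establishes both the plain reconstructibility of the degree multiset and the strong reconstructibility of $\deg_G(v)$ and $\leftmset\deg_G(u)\colon u\in N_G(v)\rightmset$ in one pass.
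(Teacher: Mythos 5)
Your proof is correct and is exactly the classical argument from the cited survey: Kelly-style edge counting gives $|E(G)|$ and hence strong reconstructibility of $\deg_G(v)$, and your telescoping identity $n_k=\sum_{j=0}^{k}(a_j-b_j)$ is the standard way to recover the neighbor-degree multiset from the card together with the (deck-determined) global degree sequence. The paper states this lemma without proof, citing the literature, and your argument fills that in correctly.
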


\begin{lemma}[cf.~\cite{MR0480189}]
	\label{lem: reconstructible-graph-classes}
	The following graph classes are reconstructible:
	\begin{enumerate}[(i)]
		\item disconnected graphs,
		\item graphs with a universal vertex.
	\end{enumerate}
\end{lemma}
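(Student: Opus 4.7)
The plan is to prove (i) via an iterative component-peeling argument powered by Kelly's counting lemma, and then derive (ii) from (i) by complementation. Kelly's lemma states that for every graph $H$ with $|V(H)|<|V(G)|$, the number of induced subgraphs of $G$ isomorphic to $H$ is reconstructible from $\mathcal{D}(G)$, since summing this count over all cards tallies each induced $H$-copy in $G$ exactly $|V(G)|-|V(H)|$ times.

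For (i), I would first show that disconnectedness is recognizable: a connected graph on at least three vertices has at least two non-cut vertices and hence at least two connected cards; a disconnected $G$ of order $n\ge 3$ has at most one connected card, since $G-v$ is connected only in the degenerate case where $G$ has exactly two components and $v$ is a singleton component. Assuming $G$ is disconnected, every component has order strictly less than $n$, so Kelly's lemma supplies the induced-subgraph count in $G$ for every connected test graph. I would then reconstruct the components in decreasing order of size: the largest component order $p_{\max}$ equals the largest $p<n$ for which some connected $p$-vertex graph occurs as an induced subgraph of $G$. For any connected $H$ on $p_{\max}$ vertices, every induced copy of $H$ in $G$ must fill an entire component, so the Kelly-count of $H$ is exactly the number of components of $G$ isomorphic to $H$. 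Subtracting the contributions of the identified components from the induced-subgraph counts of smaller connected graphs reduces the problem to the remaining components, and iteration determines the full multiset of components of $G$, and hence $G$ itself.

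For (ii), I would use that $u$ is universal in $G$ if and only if $u$ is isolated in the complement $\overline{G}$, which happens if and only if $\overline{G}$ is disconnected. Complementation commutes with vertex deletion, so $\mathcal{D}(\overline{G})$ is determined by $\mathcal{D}(G)$ (complement each card individually); applying (i) to $\overline{G}$ yields that $\overline{G}$, and therefore $G$, is reconstructible. The main obstacle lies in the bookkeeping of the peeling recursion in (i): one must carefully ensure that Kelly's lemma is invoked only for subgraphs strictly smaller than $G$ (which is guaranteed precisely by disconnectedness, since every component is a proper subgraph) and one must subtract the contributions of already-identified larger components without double-counting when descending to smaller component sizes.
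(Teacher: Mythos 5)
The paper states this lemma without proof, citing it as a classical result from the reconstruction literature. Your argument is precisely the standard proof found there: Kelly's counting lemma applied to connected induced subgraphs to peel off the components of a disconnected graph in decreasing order of size (after recognizing disconnectedness via the number of connected cards), and then complementation, using $\overline{G-v}=\overline{G}-v$ and the fact that a universal vertex of $G$ is an isolated vertex of $\overline{G}$, to deduce (ii) from (i). The argument is correct and complete for graphs of order at least~3, which is the only regime in which the lemma is used.
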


\section{The structure of interval graphs}\label{sec:structure:theory}
\subsection{Compact and tidy representations and bordering vertices}

	Let $G$ be an interval graph with an interval representation $\mathcal{I} \coloneqq \{[\ell_v, r_v ]\colon v\in V(G) \}$.
	The interval representation $\mathcal{I}$ is \emph{compact} if $|\{\ell_v \colon v\in V  \} \cup \{r_v \colon v \in V \}|$ is minimal among all interval representations of $G$. Note that every interval graph has a compact interval representation.
	We call~$\mathcal{I}$ \emph{tidy} if for every~$v\in V(G)$ and every~$M\subseteq V(G)$ with~$N_G[v]\subseteq N_G[M]$ we have~$[\ell_v, r_v ]\subseteq [\min\{\ell_m\colon m\in M\}, \max\{r_m\colon m\in M\} ]$.

\begin{lemma}\label{existence:tidy}
If $\mathcal{I} \coloneqq \{[\ell_v, r_v ]\colon v\in V(G) \}$ is an interval representation of a graph~$G$, then there is a tidy interval representation  $\mathcal{I'} \coloneqq \{[\ell'_v, r'_v ]\colon v\in V(G) \}$ of~$G$ for which~$[\ell'_v, r'_v ]\subseteq [\ell_v, r_v ]$ for all~$v\in V(G)$. Furthermore, every compact interval representation is tidy.
\end{lemma}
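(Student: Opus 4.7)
The plan is to prove both statements via an iterative shrinking construction. For the first, I define a local shrinking step: whenever $(v,M)$ witnesses non-tidiness with $\ell_v<\ell_M$ (the case $r_v>r_M$ is symmetric), I replace $[\ell_v,r_v]$ by $[\ell_v,r_v]\cap[\ell_M,r_M]$. The central verification is that this still represents $G$. Non-adjacencies are automatically preserved because intervals only shrink. For adjacencies, given $u\in N_G[v]$, I would apply Helly's theorem for intervals on the real line to the three intervals $[\ell_u,r_u]$, $[\ell_v,r_v]$, $[\ell_M,r_M]$: these are pairwise intersecting --- the first two because $u=v$ or $u\sim v$; the last two because $v\in N_G[v]\subseteq N_G[M]$, so $v$ equals or is adjacent to some $m\in M$; and the first and third because $u\in N_G[M]$. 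Their common point lies in the shrunken interval for $v$ and witnesses the preserved adjacency with~$u$. Iteration terminates because every newly introduced endpoint (namely $\ell_M$ or $r_M$) equals some $\ell_m$ or $r_{m'}$ and hence remains in the finite set $E$ of endpoints of~$\mathcal{I}$; combined with the fact that each shrink strictly reduces some interval, the process terminates in a tidy refinement $\mathcal{I}'$ with $[\ell_v',r_v']\subseteq[\ell_v,r_v]$ for every~$v$.

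For the second assertion, I would argue structurally. Let $p_1<\cdots<p_k$ be the distinct endpoints of a compact representation and set $C_t\coloneqq\{u\in V(G):p_t\in[\ell_u,r_u]\}$. A key structural claim is that compactness forces $C_t\not\subseteq C_{t+1}$ for every~$t$; indeed, if $C_t\subseteq C_{t+1}$ then every vertex $u$ with $\ell_u=p_t$ satisfies $r_u\geq p_{t+1}$, and resetting all such $\ell_u$ to $p_{t+1}$ preserves $G$ --- no vertex has $r_u=p_t$ because that would place it in $C_t\setminus C_{t+1}=\emptyset$ --- while eliminating the endpoint~$p_t$, contradicting minimality of~$|E|$. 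Now assume, for contradiction, that a compact representation is not tidy, witnessed by $(v,M)$ with $\ell_v<\ell_M$, and let $p_i=\ell_v$. Every $m\in M$ then satisfies $\ell_m\geq p_{i+1}$, so $C_i\cap M=\emptyset$ and each $m$'s leftmost clique-index is at least $i+1$. For any $u\in C_i\subseteq N_G[v]\subseteq N_G[M]$ there exists $m\in M$ adjacent to~$u$, and their common clique-index is at least~$i+1$; since the clique-indices of~$u$ form a consecutive range containing~$i$, they must also contain~$i+1$, giving $u\in C_{i+1}$. Hence $C_i\subseteq C_{i+1}$, contradicting the above.

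I expect the main technical obstacle to be the careful verification that the endpoint-merging step used in the compactness argument preserves both adjacencies and non-adjacencies --- a delicate check since endpoints may be shared among multiple intervals. However, this reduces precisely to the observation that $C_t\subseteq C_{t+1}$ forbids any vertex from having $r_u=p_t$, so the shift $\ell_u\mapsto p_{t+1}$ cannot disconnect $u$ from any vertex that previously met $u$ at exactly the point~$p_t$; combined with the fact that in a compact representation every endpoint value belongs to $\{p_1,\ldots,p_k\}$, this justifies the merging step and closes the argument.
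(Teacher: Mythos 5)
Your proof is correct, and it diverges from the paper's in a meaningful way. For the existence of a tidy refinement the paper also iterates a shrinking step, but its step is global at the offending endpoint: it first observes that no right endpoint can lie in $[\ell_v,\ell_M)$ (such a vertex would lie in $N_G[v]\setminus N_G[M]$), then moves \emph{every} interval with left endpoint $\ell_v$ up to $\ell_M$, so that the number of distinct endpoints strictly decreases --- that count is the paper's termination measure. Your step shrinks only $v$'s interval to $[\ell_v,r_v]\cap[\ell_M,r_M]$ and certifies preserved adjacencies by Helly for intervals; that is locally cleaner, but it forces you to use a different termination measure (all endpoints stay in the original finite set $E$ while intervals strictly shrink), which you correctly supply. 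The more substantial difference is the second claim: in the paper, ``compact implies tidy'' comes for free, since the paper's step strictly reduces the endpoint count and a compact representation minimizes it; your local step does not visibly reduce that count, so you rightly give a separate argument via the point-cliques $C_t$, showing that a non-tidiness witness forces $C_i\subseteq C_{i+1}$ while compactness forbids $C_t\subseteq C_{t+1}$ (because the merge $\ell_u=p_t\mapsto p_{t+1}$ would then be legal and would delete the endpoint $p_t$). Both halves of that argument check out --- in particular the facts that no vertex has $r_u=p_t$ under $C_t\subseteq C_{t+1}$, and that $C_i\subseteq N_G[v]\subseteq N_G[M]$ with $\ell_m\geq p_{i+1}$ for all $m\in M$ pushes every $u\in C_i$ into $C_{i+1}$ --- and your merging step is essentially the paper's shrinking step rediscovered in the compact setting. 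The net comparison: the paper's single endpoint-counting induction yields both conclusions at once, while your version trades that economy for a more transparent local step plus an independent, slightly more structural characterization of compactness.
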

\begin{proof}
	Let~$\mathcal{I} =\{[\ell_v, r_v ]\colon v\in V(G) \}$ be an interval representation of $G$. 
	For $M \subseteq V(G)$, we set~$\ell_M \coloneqq \min \{\ell_m\colon m\in M\}$ and $r_M \coloneqq \max \{r_m\colon m\in M\}$.
	Assume that there is a vertex~$v\in V(G)$ and a vertex set~$M\subseteq V(G)$ with~$N_G[v]\subseteq N_G[M]$ but $[\ell_v, r_v ]\nsubseteq [\ell_M, r_M]$.
	By symmetry, we can assume~$\ell_v < \ell_M$. 
	We obtain from $N_G[v]\subseteq N_G[M]$ that~$r_u\notin [\ell_v,\ell_M)$ for all~$u\in V(G)$.
	We alter the interval representation by replacing every interval of the form~$[\ell_w,r_w]$ for which~$\ell_w=\ell_v$ by~$[\ell_M,r_w]$.
	This gives us an interval representation of~$G$ with strictly fewer endpoints of intervals. The lemma now follows by induction.
	
	The same argument implies that compact interval representations are tidy.
\end{proof}

\begin{lemma}
	If two maximum degree vertices~$u$ and~$v$ of an interval graph $G$ are twins, then either $E(G) = \emptyset$ or $u$ and $v$ are false twins.
\end{lemma}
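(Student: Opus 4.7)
The plan is to unpack the hypothesis and reduce to the nontrivial case. Since $u$ and $v$ are twins but the conclusion allows either $E(G)=\emptyset$ or that they are false twins, I would assume that $u$ and $v$ are true twins (i.e.\ $N_G(u)=N_G(v)$ and $u,v$ are non-adjacent) and show that $E(G)=\emptyset$ must follow. So suppose for contradiction that $E(G)\neq\emptyset$. Then $\maxdeg(G)\geq 1$, and since $u$ has maximum degree, $N_G(u)=N_G(v)$ is nonempty.

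Next I would pass to an interval representation $\{[\ell_x,r_x]\colon x\in V(G)\}$ of $G$. Because $u$ and $v$ are non-adjacent, their intervals are disjoint, and by symmetry I may assume $r_u<\ell_v$. Any common neighbor $w\in N_G(u)=N_G(v)$ must intersect both $[\ell_u,r_u]$ and $[\ell_v,r_v]$, which forces $\ell_w\le r_u<\ell_v\le r_w$; hence the segment $[r_u,\ell_v]$ is contained in the interval of every $w\in N_G(u)$.

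From this geometric observation I extract a degree contradiction. Any two vertices $w,w'\in N_G(u)$ share the point $r_u$ (indeed the whole segment $[r_u,\ell_v]$), so $w$ and $w'$ are adjacent; that is, $N_G(u)$ is a clique. Pick any $w\in N_G(u)$ (which exists by the previous paragraph). Observing that $u,v\notin N_G(u)$ (since $u\notin N_G(u)$ and $v$ is non-adjacent to $u$), the set $(N_G(u)\setminus\{w\})\cup\{u,v\}$ lies in $N_G(w)$ and has size $(|N_G(u)|-1)+2=\deg_G(u)+1$. Therefore $\deg_G(w)\geq\deg_G(u)+1>\maxdeg(G)$, which is impossible.

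There is no substantial obstacle here; the only step that needs a little care is verifying that the common neighborhood $N_G(u)$ is really nonempty, which is exactly where the assumption $E(G)\neq\emptyset$ combined with $\deg_G(u)=\maxdeg(G)$ is used. The proof itself is short and purely geometric, exploiting that separated intervals force common neighbors to overlap on a shared segment.
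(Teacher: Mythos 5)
Your proof is correct and follows essentially the same argument as the paper: reduce to the case of non-adjacent twins with equal open neighborhoods, fix an interval representation with $r_u<\ell_v$, observe that every common neighbor's interval contains $[r_u,\ell_v]$ so that $N_G(u)$ is a clique, and derive $\deg_G(w)\geq \maxdeg(G)+1$ for any common neighbor $w$. The only difference is cosmetic (you assume $E(G)\neq\emptyset$ up front rather than concluding $N_G(u)=\emptyset$ at the end), and the degree count is the same.
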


\begin{proof}
	Let~$u$ and~$v$ be true twins of maximum degree in an interval graph $G$ with an interval representation $\{[l_x, r_x]\colon x \in V(G) \}$.
	By symmetry, we may assume that $r_u < \ell_v$.
	If there is a common neighbor $w$  of $u$ and $v$, then $[r_u, \ell_v] \subseteq [\ell_w, r_w]$.
	In particular, the common neighbors of $u$ and $v$ form a clique.
    Since $u$ and $v$ are true twins, we have $N_G(u) = N_G(u) \cap N_G(v) \subseteq (N_G(w) \cup \{w\})$ and $\{u,v\} \subseteq N_G(w)\setminus N_G(u)$.
    This yields~$|N_G(w)| \geq |N_G(u)|+1 = \maxdeg(G) + 1$, which is a contradiction. Hence, $N_G(u) = N_G(u) \cap N_G(v) = \emptyset$. Since $\deg_G(u) = \maxdeg(G)$, we obtain~$E(G) = \emptyset$.
\end{proof}

For two real intervals $[\ell,r]$ and $[\ell', r']$, we write  $[\ell,r] \preceq [\ell', r']$ if $\ell \leq \ell'$ and $r \leq r'$.
We say a vertex~$v$ of a graph $G$ is \emph{neighborhood-contained} if there is a vertex~$w \in V(G)$ such that~$N_G[v]\subsetneq N_G[w]$. Otherwise it is \emph{non-neighborhood-contained}.

\begin{lemma}\label{lem:order:of:max:deg:vertices}
	Let $G$ be a connected interval graph with an interval representation $\{I_v \colon v \in V(G) \}$. Let~$V'$ be a set of non-neighborhood-contained vertices.
	\begin{enumerate}[(i)]
		
		\item There is a linear order $v_1, v_2, \ldots, v_k$ on $V'$ such that for all $i \in [k-1]$:
  $I_{v_i} \preceq I_{v_{i+1}}$ or~$N_G[v_i] =N_G[v_{i+1}]$.  \label{item:are:ordered}

		\item Every linear order	obtained in this way from an interval representation of~$G$ is equivalent to~$v_1,v_2, \ldots,v_k$ or its reverse~$v_k,v_{k-1},\ldots,v_1$ up to permuting vertices from the same twin equivalence class.  \label{item:order:always same}
		
		 \item For every tidy interval representation $\{I'_v \colon v \in V(G) \}$ of~$G$, the linear order~$v_1,\dots,v_k$
		 satisfies $I'_{v_i} \preceq I'_{v_{i+1}}$ for each $i \in [k-1]$ or it satisfies~$I'_{v_i} \succeq I'_{v_{i+1}}$ for each $i \in [k-1]$.		  \label{item:tidy:is:oredered}
	\end{enumerate}
\end{lemma}
\begin{proof}
	For Part~\eqref{item:are:ordered}, we may assume without loss of generality that~$V'$ contains exactly one vertex from each twin equivalence class of non-neighborhood-contained vertices of~$G$.
	Then~$G[V']$ is connected. 
	By the choice of~$V'$, we know that $G[V']$ is a proper interval graph. 	Hence, the assumptions for Theorem~1 of~\cite{DBLP:journals/dam/Bang-JensenHI07} (originally stated in~\cite{Deng1996}) are satisfied and the result follows.
	
	For Part~\eqref{item:order:always same}, we may again assume without loss of generality that~$V'$ contains exactly one vertex from each twin equivalence class of non-neighborhood-contained vertices of~$G$. 	
	We will argue that if~$v_i,v_{i+1}, v_{i+2}$ are three consecutive vertices in the order from Part~\ref{item:are:ordered} then in every order obtained from an interval representation of~$G$, it is the case that~$v_{i+1}$ is between~$v_i$ and~$v_{i+2}$. This suffices to show Part~\eqref{item:order:always same}. 
	Note that by the choice of~$V'$ we know that~$\{v_i,v_{i+1}\}\in E(G)$ and~$\{v_{i+1},v_{i+2}\}\in E(G)$.
	We distinguish two cases. 
	\begin{itemize}
	\item 
	If~$\{v_i,v_{i+2}\}\notin E(G)$ then~$v_{i+1}$ has to be between~$v_i$ and~$v_{i+2}$, since these vertices are both adjacent to~$v_{i+1}$ but are represented by disjoint intervals.
	\item If~$\{v_i,v_{i+2}\}\in E(G)$ then there is a vertex~$x$ adjacent to~$v_i$ and~$v_{i+1}$ but not to~$v_{i+2}$ and there is a vertex~$y$ adjacent to~$v_{i+1}$ and~$v_{i+2}$ but not to~$v_{i}$. However, there is no vertex adjacent to~$v_{i}$ and~$v_{i+2}$ but not adjacent to~$v_i$, since~$N[v_{i+1}]\subseteq N[v_{i}]\cup N[v_{i+2}]$. 
	\end{itemize}
	Thus, being the vertex in the middle in the linear ordering is a combinatorial property and thus independent of the interval representation.
		
	For Part~\eqref{item:tidy:is:oredered}, we simply observe that for a tidy interval representation for every pair of twins~$v,v'$ we have $I_{v} = I_{v'}$.
\end{proof}
Note that the previous lemma in particular applies to the set of vertices of maximum degree.

\begin{definition}[Bordering vertex]
	Let $G$ be an interval graph. We call vertex $v \in \Vmaxdeg(G)$ \emph{bordering} if there is a linear order of $\Vmaxdeg(G)$ as in Lemma~\ref{lem:order:of:max:deg:vertices} such that~$v$ is extremal with respect to the ordering.
\end{definition}
Note that if a vertex is bordering then every false twin of the vertex is bordering. Also note that two maximum degree vertices that are twins are adjacent (i.e., they are false twins). We conclude that the set of bordering vertices consists of one or two equivalence classes of maximum degree vertices.

\subsection{Separations with linearly ordered neighborhoods}

We now introduce the concept of a clean clique separation along which we can reassemble the graph and which is crucial for our technique Reconstruction-by-Separation.

\begin{definition}[Clean clique separation] \label{def: clean-clique-separation}
	A \emph{separation} of a graph~$G$ is an ordered partition $(A,C,B)$ of $V(G)$ such that~$E_G(A,B) = \emptyset$.
	We say that the separation is a \emph{clean clique separation} if there is an interval representation 
	$\{[\ell_v, r_v ]\colon v\in V(G) \}$ of~$G$ such that 
		\[\left(\bigcap_{c \in C}[\ell_c, r_c]\right)\cap (\max_{a \in A} r_a, \min_{b \in B}\ell_b ) \neq \emptyset.\]
	
\end{definition}

We say that the separation $(A,C,B)$ has \emph{linearly ordered neighborhoods} if for every pair of vertices~$c_1$ and $c_2$ in $C$, we have~$N_G(c_1)\cap A \subseteq N_G(c_2)\cap A$ or~$N_G(c_2)\cap A \subseteq N_G(c_1)\cap A$ and, similarly, we also have that~$N_G(c_1)\cap B \subseteq N_G(c_2)\cap B$ or~$N_G(c_2)\cap B \subseteq N_G(c_1)\cap B$.

\begin{lemma}\label{lem: clean implies linearly ordered nbhds}
	If~$(A,C,B)$ is a clean clique separation of an interval graph,  
	then $(A,C,B)$ has linearly ordered neighborhoods.
\end{lemma}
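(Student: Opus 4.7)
The plan is to unfold the definition of a clean clique separation and derive an explicit, monotone description of $N_G(c)\cap A$ (resp.\ $N_G(c)\cap B$) in terms of the endpoints of $c$'s interval. Once that description is available, comparability of left endpoints (resp.\ right endpoints) of any two $c_1,c_2\in C$ immediately yields the required inclusion.

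Concretely, I would fix an interval representation $\{[\ell_v,r_v]\colon v\in V(G)\}$ witnessing the clean clique separation and pick an arbitrary point $x\in\bigcap_{c\in C}[\ell_c,r_c]$. By definition of a clean clique separation this yields the key inequalities $\max_{a\in A}r_a < x < \min_{b\in B}\ell_b$. In particular, for every $a\in A$ and every $c\in C$ we have $\ell_a\le r_a<x\le r_c$, so the intervals of $a$ and $c$ overlap if and only if $r_a\ge \ell_c$. Thus
\[
 N_G(c)\cap A \;=\; \{a\in A : r_a\ge \ell_c\}.
\]
By the symmetric argument, $N_G(c)\cap B = \{b\in B : \ell_b \le r_c\}$.

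The inclusion step is now trivial: given $c_1,c_2\in C$, assume without loss of generality that $\ell_{c_1}\le \ell_{c_2}$. Then $\{a\in A : r_a\ge \ell_{c_2}\}\subseteq\{a\in A : r_a\ge \ell_{c_1}\}$, i.e.\ $N_G(c_2)\cap A\subseteq N_G(c_1)\cap A$. Applying the symmetric argument to the right endpoints, after possibly renaming, gives one of $N_G(c_1)\cap B\subseteq N_G(c_2)\cap B$ or the reverse inclusion. Hence $(A,C,B)$ has linearly ordered neighborhoods.

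Honestly there is no real obstacle here: once the clean clique separation is translated into the endpoint inequalities involving the point $x$, the characterization of $N_G(c)\cap A$ by the single number $\ell_c$ (and of $N_G(c)\cap B$ by $r_c$) makes the nested-neighborhood property automatic. The only thing to be mildly careful about is that the two inclusions for $A$ and $B$ need not align in the same direction for a fixed pair $c_1,c_2$, but that is allowed by the definition of linearly ordered neighborhoods.
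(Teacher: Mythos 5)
Your proof is correct, and it takes a slightly different (and arguably cleaner) route than the paper's. The paper also fixes the witnessing representation and a point $c^{\star}$ in $\bigcap_{c\in C}[\ell_c,r_c]$, but then argues by contradiction: assuming two incomparable neighborhoods in $A$, it produces $a_1\in N_G(c_1)\setminus N_G(c_2)$ and $a_2\in N_G(c_2)\setminus N_G(c_1)$, invokes chordality of interval graphs to rule out the induced $4$-cycle and conclude $a_1a_2\notin E(G)$, and then derives a contradiction from the resulting endpoint inequalities. You instead prove the stronger, direct statement that $N_G(c)\cap A=\{a\in A\colon r_a\ge \ell_c\}$ (and symmetrically $N_G(c)\cap B=\{b\in B\colon \ell_b\le r_c\}$), from which the nesting is immediate by comparing the single thresholds $\ell_{c_1},\ell_{c_2}$ (resp.\ $r_{c_1},r_{c_2}$). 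Your key step is sound: since $\ell_a\le r_a<x\le r_c$ for all $a\in A$ and $c\in C$, the only nontrivial intersection condition is $\ell_c\le r_a$. What your version buys is that it avoids any appeal to chordality or the structure of induced cycles, works directly from the defining inequality of a clean clique separation, and yields an explicit monotone description of the separator's neighborhoods that could be reused elsewhere; the paper's version is marginally shorter on the page but logically needs the auxiliary fact that interval graphs are chordal. Your closing remark that the directions of the inclusions for $A$ and for $B$ need not agree is exactly right and matches the definition in the paper.
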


\begin{proof}
	Let $(A, C, B)$ be a clean clique separation of an interval graph	$G$ and let $\{[\ell_v, r_v] \colon v \in V(G)\}$ be an interval representation of $G$ as in the definition of clean separations.  
	By definition, there exists a~$c^{\star} \in (\max_{c \in C}\ell_c, \min_{c\in C}r_c)\cap (\max_{a \in A} r_a, \min_{b \in B}\ell_b )$.
	By symmetry it suffices to show that neighborhoods are linearly ordered with respect to~$A$.
	Let~$c_1$ and~$c_2$ be vertices in~$C$.
	
	Suppose towards a contradiction that neither $N_G(c_1)\cap A \subseteq N_G(c_2)\cap A$ nor~$N_G(c_2)\cap A \subseteq N_G(c_1)\cap A$.
	This implies that there are vertices~$a_1$ and $a_2$ in $A$ such that~$a_1\in N_G(c_1)\setminus N_G(c_2)$ and~$a_2\in N_G(c_2)\setminus N_G(c_1)$. The vertices are non-adjacent since otherwise~$G[\{a_1,a_2,c_1,c_2\}]$ is an induced 4-cycle, which is a contradiction since every interval graph is chordal (see~Lemma~\ref{lem: conditions of interval graphs}).
	In particular, the intervals~$[\ell_{a_1}, r_{a_1}]$ and $[\ell_{a_2}, r_{a_2}]$ are disjoint, say, $\ell_{a_1} \leq r_{a_1} < \ell_{a_2}\leq r_{a_2}$.
	Since $\ell_{c_1} \leq r_{a_1} < r_{a_2} < c^{\star} \leq r_{c_1}$ (the second-to-last inequality holds since $(A,C, B)$ is clean) it follows that $c_1$ is adjacent to~$a_2$, which is a contradiction.
\end{proof}

\subsection{The sides of a vertex in an interval graph}

Let~$G$ be an interval graph and let~$S\subseteq V(G)$ be a set of vertices. We define an equivalence relation~$\approx_{G,S}$ on the vertices in~$V(G)\setminus N[S]$ as follows:
Define~$u\sim_{G} u'$ if~$N_G[u]\cap N_G[u']$ is non-empty.
We let~$\approx_{G,S}$ be the equivalence relation on~$V(G)\setminus N[S]$ that is  the transitive closure of~$\sim_{G}$. 

\begin{lemma} \label{lem: flanks}
	Let~$G$ be a connected interval graph  and let $\{[\ell_v, r_v]\colon v \in V(G) \}$ be an interval representation of~$G$. Further let $S\subseteq V(G)$ be a non-empty connected set of vertices.
	Set \[L_G(S) \coloneqq \{u \in V(G)\colon r_u < \min_{s \in S}\ell_s \} ~\text{and}~
	R_G(S) \coloneqq \{w \in V(G)\colon \ell_w > \max_{s \in S}r_s \}.\]
	\begin{enumerate}[(i)]
		\item Each two vertices of $L_G(S)$ are equivalent and each two vertices of $R_G(S)$ are equivalent with respect to $\approx_{G,S}$.
		In particular, there are at most two equivalence classes with respect to~$\approx_{G,S}$ on $V(G)\setminus N_G[S]$.
		\item \label{itm: intuitive-sides} If there exists a vertex $s \in S$ with $\deg_G(s) \geq \maxdeg(G)-1$, then both sets~$L_G(S)$ and~$R_G(S)$ are equivalence classes with respect to $\approx_{G,S}$.
		In particular, $L_G(S) \cap R_G(S) = \emptyset$.
	\end{enumerate}
\end{lemma}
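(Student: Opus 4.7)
The plan rests on a single structural observation: because $G[S]$ is connected, the intervals of $S$ sweep out the single sub-interval $[\ell_S, r_S]$ with $\ell_S = \min_{s\in S}\ell_s$ and $r_S = \max_{s\in S}r_s$. Any vertex $v \in V(G) \setminus N_G[S]$ has $[\ell_v, r_v]$ disjoint from $[\ell_S, r_S]$ (else $v$ would be adjacent to some $s \in S$), so $r_v < \ell_S$ or $\ell_v > r_S$; hence $V(G) \setminus N_G[S] = L_G(S) \sqcup R_G(S)$. Thus (i) reduces to showing that each of the two halves sits inside a single $\approx_{G,S}$-class, and (ii) adds that under the degree hypothesis these two classes must be distinct.

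For (i) I focus on $L_G(S)$ and invoke symmetry for $R_G(S)$. I consider the connected components $A_1, \ldots, A_m$ of $G[L_G(S)]$; each $A_j$ is itself a connected interval graph, so its intervals union to a single sub-interval $[\beta_j, \alpha_j]$ of $(-\infty, \ell_S)$, and distinct components have disjoint such sub-intervals (else an edge would merge them), so after reindexing we may assume $\alpha_1 < \beta_2 \le \alpha_2 < \beta_3 \le \cdots$. Within each $A_j$ any two vertices are joined by a $G[L_G(S)]$-path whose consecutive edges are $\sim_G$-edges inside $L_G(S) \subseteq V(G)\setminus N_G[S]$, so $A_j$ forms a single $\approx_{G,S}$-class. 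To merge the components I use that $A_1 \subsetneq V(G)$ (because $S \neq \emptyset$) and $G$ is connected, so $A_1$ has an outside neighbour $w$; such $w$ can lie neither in $L_G(S) \setminus A_1$ (it would then be in the same $G[L_G(S)]$-component as $A_1$) nor in $R_G(S)$ (its interval is disjoint from $A_1$'s), forcing $w \in N_G(S)$. The decisive calculation is that adjacency of $w$ to some $a \in A_1$ yields $\ell_w \le r_a \le \alpha_1 < \ell_S$, while $w \in N_G(S)$ combined with $\ell_w < \ell_S$ yields $r_w \ge \ell_S$; hence for every $j \ge 2$ we have $\ell_w \le \alpha_1 < \beta_j < \ell_S \le r_w$, so $\beta_j \in [\ell_w, r_w]$ and $w$ is adjacent to the vertex $a_j \in A_j$ realising $\ell_{a_j} = \beta_j$. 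Thus the single vertex $w$ is a common neighbour of vertices in $A_1$ and in every other $A_j$, collapsing all components into one $\approx_{G,S}$-class.

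For (ii) I argue by contradiction: suppose $L_G(S)$ and $R_G(S)$ lie in the same $\approx_{G,S}$-class. Then some $\sim_G$-chain contains consecutive vertices $v_i \in L_G(S)$ and $v_{i+1} \in R_G(S)$; these are non-adjacent (their intervals lie on opposite sides of $[\ell_S, r_S]$), so the witness $w \in N_G[v_i] \cap N_G[v_{i+1}]$ is a common neighbour distinct from both. Adjacency to $v_i$ yields $\ell_w \le r_{v_i} < \ell_S$ and to $v_{i+1}$ yields $r_w \ge \ell_{v_{i+1}} > r_S$, so $[\ell_S, r_S] \subseteq [\ell_w, r_w]$; in particular $w$ is adjacent to every vertex of $S$ and $w \in N_G(S)$. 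Now let $s \in S$ satisfy $\deg_G(s) \ge \maxdeg(G) - 1$. From $[\ell_s, r_s] \subseteq [\ell_w, r_w]$ I conclude $N_G[s] \subseteq N_G[w]$, and the two vertices $v_i$ and $v_{i+1}$ belong to $N_G[w] \setminus N_G[s]$ (each differs from $s$ and has its interval disjoint from $[\ell_s, r_s]$). Consequently $|N_G[w]| \ge |N_G[s]| + 2$ and
\[
\deg_G(w) \ge \deg_G(s) + 2 \ge \maxdeg(G) + 1,
\]
contradicting $\deg_G(w) \le \maxdeg(G)$. Together with (i) this gives that $L_G(S)$ and $R_G(S)$ are distinct $\approx_{G,S}$-equivalence classes.

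The main obstacle is the cross-component bridging step in (i): a priori it is not at all obvious that a single vertex in $N_G(S)$ must reach every component of $G[L_G(S)]$, and making this work requires extracting precisely the right geometric span from the two pieces of information $w \in N_G(S)$ and $w$ adjacent to the leftmost component. Once that is in place, the remainder is a careful translation of $\sim_G$ into interval overlaps, with the degree-counting contradiction in (ii) exploiting the sharpness of the hypothesis $\deg_G(s) \ge \maxdeg(G) - 1$: it gives exactly the two units of slack supplied by $v_i$ and $v_{i+1}$.
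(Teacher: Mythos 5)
Your proof is correct. Part~(ii) is essentially the paper's argument: you locate a common neighbour $w$ of a vertex of $L_G(S)$ and a vertex of $R_G(S)$, observe that its interval must cover $[\ell_S,r_S]$ and hence $N_G[s]\subseteq N_G[w]$, and derive the contradiction $\deg_G(w)\geq \deg_G(s)+2>\maxdeg(G)$; the only cosmetic difference is that you extract the offending pair from a $\sim_G$-chain rather than assuming $u\sim_G w$ directly. Part~(i) is where you genuinely diverge. The paper takes two vertices $u_1,u_2\in L_G(S)$, a shortest $u_1$-$u_2$ path in $G$, and the vertex $x\in N_G[S]$ minimizing $\ell_x$, and argues that the path meets $N_G[S]$ at most once; this is short but leaves the reader to fill in why two visits to $N_G[S]$ are impossible. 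You instead decompose $G[L_G(S)]$ into components, order them by their interval spans, and show that any outside neighbour $w$ of the leftmost component necessarily lies in $N_G(S)$ and has an interval covering $[\alpha_1,\ell_S]$, so it is simultaneously adjacent to every other component. Your version is more explicit about the geometric mechanism and avoids the shortest-path bookkeeping, at the cost of introducing the component ordering; both establish the same fact, and your bridging vertex $w$ plays the same role as the paper's leftmost vertex $x$ of $N_G[S]$. (Like the paper, you do not address the degenerate reading of the phrase ``$L_G(S)$ and $R_G(S)$ are equivalence classes'' when one of the sets is empty, but that is an issue with the statement rather than with either proof.)
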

\begin{proof}
	Since~$G[S]$ is connected, $v\in N[S]$ if and only if $I_v \cap [\min_{s\in S} \ell_s ,\max_{s \in S}r_s] \neq \emptyset$ and, hence, $V(G)\setminus N[S] = L_G(S) \cup R_G(S)$.
	For the first part, by symmetry, it suffices to show that the vertices in~$L_G(S)$ are equivalent with respect to~$\approx_{G,S}$. 
	Let~$u_1$ and~$u_2$ be in $L_G(S)$ and consider a shortest $u_1$-$u_2$~path~$P$ in $G$.
	Choose a vertex $x$ from $N_G[S]$ which minimizes $\ell_x$. 
	Note that $[\ell_x, r_x]$ covers all points~$a\in \bigcup \{I_v\colon v\in N_G[S]\}$ satisfying~$a<\min_{s \in S} \ell_s$. This implies that $P$ contains at most one vertex from~$N_G[S]$ (because if there were two, the subpath between them could be replaced by~$x$ contradicting that~$P$ is shortest). Hence, $u_1 \approx_{G,S} u_2$.
	
	For the second part, suppose there is a vertex $s \in S$ with $\deg_G(s) \geq \maxdeg(G)-1$.
	Towards a contradiction suppose that there exist vertices $u \in L_G(S)$ and $w \in R_G(S)$ with $u \sim_{G} w$.
	Since $u$ and $w$ cannot be adjacent there exists a vertex $x \in N_G[S]\setminus S$ which is adjacent to both~$u$ and~$w$.
	In particular, the interval $[\ell_x, r_x]$ satisfies
	$\ell_x \leq r_u < \ell_s \leq r_s < \ell_w \leq r_x$.
	It follows that every neighbor of $s$ is also a neighbor of $x$.
	Since $x$ also has the two neighbors $u$ and $w$ which are not in the neighborhood of $s$, we obtain $\deg_G(x) \geq \deg_G(s) + 2 \geq \maxdeg(G) + 1$, which is a contradiction. Altogether, no vertex of $R_G(S)$ is related to a vertex of $L_G(S)$ with respect to $\sim_{G}$ and, hence, each of the sets $L_G(S)$ and $R_G(S)$ is an equivalence classes with respect to~$\approx_{G,S}$.
\end{proof}

\begin{definition}[Sides of a vertex]
Let $G$ be an interval graph, $s\in V(G)$ with~$\deg(s)\geq \maxdeg(G)-1$, and let $G'$ be the component of $G$ containing $s$. The \emph{sides of~$s$} are the equivalence classes of~$\approx_{G',\{s\}}$.
\end{definition}

\subsection{The coating and the span of a vertex set}
	
\begin{definition}[Coating of a vertex set]
Let $G$ be a graph. For $S \subseteq V(G)$, we define the \emph{coating} of $S$, denoted by $\nucl_G(S)$, to be the set of vertices~$x$ of~$G$ for which there exist~$p_1$ and~$p_t$ in $S$ and an induced $p_1$-$p_t$ path~$P$ in~$G$ (possibly~$t=1$) with~$x \in V(P)$.
	
\end{definition}

We observe that if~$G$ is connected then for every set~$S$ the \emph{coating} of $S$ induces a connected subgraph: indeed, all vertices of~$S$ are in the same connected component since there is an induced path between every pair of vertices and all other vertices of the coating must be in the same connected component by construction.

\begin{lemma} \label{lem: paths-in-interval-graphs}
	Let $G$ be an interval graph with an interval representation $\{[\ell_v, r_v]\colon v \in V(G) \}$.
	\begin{enumerate}[(i)]
		\item \label{itm: ordered induced paths} If $v_1v_2\cdots v_k$ is an induced path in $G$ and $r_{v_1} \leq r_{v_k}$, then
		$\ell_{v_{i+1}} \leq r_{v_i} < \ell_{v_{i+2}}$ for all $i \in [k-2]$.
		\item \label{itm: max induced path} If $u$ and $v$ are distinct vertices in $\Vmaxdeg(G)$ and $w$ is a vertex of an induced $u$-$v$ path in $G$, then
		$[\ell_w, r_w] \subseteq [\min \{\ell_u, \ell_v \}, \max \{r_u, r_v\}]$. \label{itm: induced-paths-of-int-graphs-2}
		\item If $G$ is connected, then there exist $x$ and $y$ in $\Vmaxdeg(G)$ such that $\bigcup_{v \in \nucl(\Vmaxdeg(G))}[\ell_v, r_v] = [\ell_x, r_y]$.\label{itm: induced-paths-of-int-graphs-3}
	\end{enumerate}
\end{lemma}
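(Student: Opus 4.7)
I would prove the three items in order, with (ii) and (iii) following quickly from the monotonicity established in (i). The core work is in (i); (ii) is then a matter of packaging, and (iii) uses (ii) together with an explicit induced path.

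\textbf{Part (i).} My plan is to show that every induced path in an interval graph has its intervals monotonically ordered along one of the two possible directions, with the hypothesis $r_{v_1}\le r_{v_k}$ fixing the left-to-right orientation. I would induct on $i$. For the base case $i=1$, non-adjacency of $v_1$ and $v_3$ makes the intervals $[\ell_{v_1},r_{v_1}]$ and $[\ell_{v_3},r_{v_3}]$ disjoint, so either $r_{v_1}<\ell_{v_3}$ or $r_{v_3}<\ell_{v_1}$. In the second case I would extend the analysis inductively to $v_j$ for $j\ge 3$ and conclude $r_{v_j}<\ell_{v_1}\le r_{v_1}$ for all such $j$, in particular $r_{v_k}<r_{v_1}$, contradicting the hypothesis. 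Hence $r_{v_1}<\ell_{v_3}$. The inequality $\ell_{v_2}\le r_{v_1}$ then follows from the adjacency of $v_1$ and $v_2$ combined with the adjacency of $v_2$ and $v_3$: if we had $\ell_{v_2}>r_{v_1}$, then intersection with $v_1$'s interval fails. For the inductive step, the analogous case analysis applied to the triple $v_i,v_{i+1},v_{i+2}$ uses the induction hypothesis (which places $v_{i+1}$ strictly to the right of $v_{i-1}$) to exclude the ``leftward'' configuration and yields $r_{v_i}<\ell_{v_{i+2}}$ together with $\ell_{v_{i+1}}\le r_{v_i}$.

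\textbf{Part (ii).} After possibly swapping $u$ and $v$ so that $r_u\le r_v$, I would apply (i) to an induced $u$-$v$ path $v_1=u,\dots,v_k=v$. This yields $\ell_{v_{i+1}}\le r_{v_i}<\ell_{v_{i+2}}$ for all valid $i$, and combining with adjacency of consecutive vertices one gets the strict monotonic chains $\ell_{v_1}<\ell_{v_2}<\dots<\ell_{v_k}$ and $r_{v_1}<r_{v_2}<\dots<r_{v_k}$. Since $w=v_i$ for some $i$, it follows that $\ell_u=\ell_{v_1}\le\ell_w$ and $r_w\le r_{v_k}=r_v$, which gives the required containment $[\ell_w,r_w]\subseteq[\min\{\ell_u,\ell_v\},\max\{r_u,r_v\}]$.

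\textbf{Part (iii).} I would pick $x\in\Vmaxdeg(G)$ minimising $\ell_x$ and $y\in\Vmaxdeg(G)$ maximising $r_y$. For the inclusion ``$\subseteq$'', any $w\in\nucl(\Vmaxdeg(G))$ sits on some induced path between two maximum degree vertices $p$ and $q$, and by (ii) its interval is contained in $[\min\{\ell_p,\ell_q\},\max\{r_p,r_q\}]\subseteq[\ell_x,r_y]$. For the reverse inclusion, connectivity of $G$ provides an induced $x$-$y$ path (trivial if $x=y$); by (i) the intervals along this path are monotonically ordered and consecutive ones overlap, so their union is the single interval $[\ell_x,r_y]$; all vertices of this path lie in $\nucl(\Vmaxdeg(G))$.

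\textbf{Main obstacle.} The difficulty is concentrated in (i): one has to rule out configurations where the path ``turns around'' partway through, and the only global information available for this is the mild hypothesis $r_{v_1}\le r_{v_k}$. The induction therefore needs to propagate a directional invariant through the entire path while only invoking the local non-adjacencies $v_iv_{i+2}\notin E(G)$ and the adjacencies of consecutive vertices. Once this monotonicity is in hand, parts (ii) and (iii) are essentially bookkeeping.
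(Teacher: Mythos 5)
Your part (i) is sound (and in fact argued more carefully than the paper's rather terse version of essentially the same induction), and your part (iii) would follow once (ii) is in place. The genuine gap is in part (ii): you claim that (i) together with adjacency of consecutive vertices yields the strict chains $\ell_{v_1}<\ell_{v_2}<\cdots<\ell_{v_k}$ and $r_{v_1}<r_{v_2}<\cdots<r_{v_k}$, and you never invoke the hypothesis that $u$ and $v$ have maximum degree. What (i) actually gives is $\ell_{v_2}<\ell_{v_3}<\cdots<\ell_{v_k}$ and $r_{v_1}<\cdots<r_{v_{k-1}}$; it says nothing about $\ell_{v_1}$ versus $\ell_{v_2}$, nor about $r_{v_{k-1}}$ versus $r_{v_k}$. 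The interval of $v_2$ may protrude to the left of $\ell_{v_1}$, and that of $v_{k-1}$ to the right of $r_{v_k}$. Without the maximum-degree hypothesis the conclusion of (ii) is simply false: take $u=[0,1]$, $w=[-1,2]$, $v=[1.5,3]$; then $uwv$ is an induced path with $r_u\le r_v$, part (i) holds, but $[\ell_w,r_w]=[-1,2]\not\subseteq[0,3]=[\min\{\ell_u,\ell_v\},\max\{r_u,r_v\}]$.

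The missing idea is exactly where $\deg_G(u)=\deg_G(v)=\maxdeg(G)$ must enter. Suppose $w=v_2$ is the path-neighbor of $u=v_1$ and $\ell_w<\ell_u$. By (i) and the adjacency of $v_2$ with $v_3$ one has $r_u=r_{v_1}<\ell_{v_3}\le r_{v_2}=r_w$, so $[\ell_u,r_u]\subsetneq[\ell_w,r_w]$ and hence $N_G[u]\subseteq N_G[w]$; moreover $v_3\in N_G(w)\setminus N_G[u]$, so $\deg_G(w)>\deg_G(u)=\maxdeg(G)$, a contradiction. The symmetric argument bounds $r_{v_{k-1}}$ at the other end. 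This is precisely how the paper's proof closes the two boundary cases ($w$ adjacent to $u$ or to $v$); only the interior case ($w\notin N_G(u)\cup N_G(v)$) genuinely follows from (i). As written, your proof of (ii) does not go through, and since (iii) leans on (ii), the repair is needed before the rest of your argument stands.
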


\begin{proof}
	We prove Part~\eqref{itm: ordered induced paths} of the lemma.
		We have~$\ell_{v_{i+1}} \leq r_{v_i}$ since~$v_{i+1}$ and~$v_i$ are adjacent.  Towards the second inequality, observe that~$r_{v_1}\leq r_{v_2}\leq \cdots \leq r_{v_k}$: indeed, otherwise there is a~$j$ such that~$r_{v_j}> r_{v_{j+1}}$ and~$r_{v_{j+1}}<r_{v_{j+2}}$, but then~$v_j$ and~$v_{j+2}$ are adjacent. We conclude as follows. If~$r_{v_i} \geq \ell_{v_{i+2}}$, then~$\bigcup_{i+2\leq j\leq k} [\ell_{v_j}, r_{v_j}]$
		is a connected interval containing a point~$p\leq r_{v_i}$ and a point~$p'\geq r_{v_i}$ (for example~$p'=r_k$). Thus some~$v_j$ with~$j\geq i+2$ is adjacent to~$v_i$.
		
		For Part~\eqref{itm: max induced path}, let $u$ and $v$ be two maximum-degree vertices of~$G$, and let $P$ an induced $u$-$v$ path in~$G$. Let $w \in V(P)$.
		If $w \in \{u,v\}$, the statement follows immediately.
		Hence, assume $|V(P)| \geq 3$ and $w \notin \{u,v\}$.
		Since $P$ is induced, the intervals corresponding to $u$ and $v$ do not intersect.
		By symmetry, we may assume that $\ell_u \leq r_u < \ell_v \leq r_v$.
		
		If $w \notin N_G(u)$, then it follows from the first part of this lemma that $\ell_u\leq r_u< \ell_w$.
		If otherwise $w \in N_G(u)$, then $\ell_w \leq r_u < r_w$.
		Suppose towards a contradiction that $\ell_w < \ell_u$.
		Then $N_G(u) \subseteq N_G(w)$ and since $P$ is induced, $w$ has at least one more neighbor in $V(G)\setminus N_G(u)$, which is a contradiction to $\deg_G(u) = \maxdeg(G)$. We conclude~$\ell_w\geq \ell_u$. Symmetrically, we conclude~$r_w\leq r_v$.

		For Part~\eqref{itm: induced-paths-of-int-graphs-3}, note that by Part~\eqref{itm: induced-paths-of-int-graphs-2} for each~$v \in \nucl(\Vmaxdeg(G))$ there are vertices~$x_v$ and~$y_v$ with~$[\ell_v, r_v]\subseteq [\ell_{x_v}, r_{y_v}]$. By Lemma~\ref{lem:order:of:max:deg:vertices}, there are vertices~$x$ and~$y$ in $\Vmaxdeg(G)$ such that~$[\ell_x,r_y]$ contains all intervals~$[\ell_{x_v}, r_{y_v}]$.
		This shows $\bigcup_{v \in \nucl(\Vmaxdeg(G))}[\ell_v, r_v] \subseteq [\ell_x, r_y]$. For the other inclusion direction it suffices to recall that~$\nucl(\Vmaxdeg(G))$ induces a connected subgraph.
\end{proof}

We define the span of a set of vertices~$S$ to be the set obtained by forming the coating and then including all vertices which are neighborhood-contained by~$\nucl(S)$ as follows. 

\begin{definition}[Span]
Let $G$ be a graph.
For a subset~$S$ of $V(G)$, we define the \emph{span}  of~$S$ to be the set
\[ \Span_G(S) \coloneqq \{v \in N_G[\nucl_G(S)] \colon N_G[v]\subseteq N_G[\nucl_G(S)]\}.\] 

\end{definition}

We will drop the index~$G$ in the notation for the span and the coating if it is apparent from the context.
\begin{lemma}\label{lem:nucleus:and:span}
	Let $G$ be an interval graph and $S \subseteq V(G)$.
	\begin{enumerate}[(i)]
		\item  $S \subseteq \nucl_G(S) \subseteq \Span_G(S)$. \label{itm: nucl-in-span}
		\item $N_G[\Span_G(S)]= N_G[\nucl_G(S)]$. \label{itm: n-of-span-is-n-of-nucl}
		\item  If~$G$ is connected, then~$\Span_G(S)$ induces a connected subgraph of~$G$. \label{itm: span-is-connected}
		\item Suppose~$G$ is connected. If $\{[\ell_v, r_v ]\colon v\in V(G) \}$ is a tidy interval representation of~$G$ and~$S \subseteq \Vmaxdeg(G)$, then
		\label{itm:span-is-intuitive-span}\begin{equation*} \label{eq: span-is-span}
			\Span_G(S) = \{v \in V(G)\colon [\ell_v, r_v] \subseteq [\min_{s \in S}\ell_s, \max_{s \in S}r_s]\}.
		\end{equation*}
	\end{enumerate}
\end{lemma}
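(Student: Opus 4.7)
The plan is to dispatch (i) and (ii) from the definitions, argue (iii) by a path-concatenation, and reduce (iv) to the geometric identity $\bigcup_{v \in \nucl_G(S)}[\ell_v, r_v] = [L, R]$, where $L \coloneqq \min_{s \in S}\ell_s$ and $R \coloneqq \max_{s \in S}r_s$, combined with the tidiness hypothesis. For (i), each $s \in S$ lies on the trivial single-vertex path (the case $t = 1$), so $S \subseteq \nucl_G(S)$; and any $v \in \nucl_G(S)$ trivially satisfies both defining conditions of $\Span_G(S)$. Part (ii) is then immediate: $\supseteq$ follows from (i), while $\subseteq$ is precisely the defining condition $N_G[v] \subseteq N_G[\nucl_G(S)]$ of $\Span_G(S)$.

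For (iii), I would first verify that $G[\nucl_G(S)]$ is connected. Given $x, y \in \nucl_G(S)$ on induced paths with endpoints $s_1, s_2$ and $s_3, s_4$ in $S$ respectively, any induced $s_2$-$s_3$ path in $G$ (which exists by connectivity of $G$) is itself contained in $\nucl_G(S)$, and concatenating the three paths yields an $x$-$y$ walk inside $\nucl_G(S)$. Now every vertex of $\Span_G(S) \setminus \nucl_G(S)$ lies in $N_G[\nucl_G(S)] \setminus \nucl_G(S)$ and hence is adjacent to some vertex of $\nucl_G(S)$, so $G[\Span_G(S)]$ is connected.

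For (iv), set $T \coloneqq \{v \in V(G) : [\ell_v, r_v] \subseteq [L, R]\}$. The key geometric step is $\bigcup_{v \in \nucl_G(S)}[\ell_v, r_v] = [L, R]$. The inclusion $\subseteq$ uses that for any induced path between two distinct vertices of $S \subseteq \Vmaxdeg(G)$, Lemma~\ref{lem: paths-in-interval-graphs}(ii) places every path interval inside $[L, R]$ (the case $|S|=1$ is trivial). For $\supseteq$, I would pick $s_1, s_2 \in S$ with $\ell_{s_1} = L$ and $r_{s_2} = R$ (so $I_{s_1} \preceq I_{s_2}$) and take any induced $s_1$-$s_2$ path: its vertex-intervals lie in $[L, R]$ by Lemma~\ref{lem: paths-in-interval-graphs}(ii), and consecutive ones overlap by Lemma~\ref{lem: paths-in-interval-graphs}(i), so their union is a single interval containing both $L$ and $R$, hence equals $[L, R]$. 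From this identity I immediately obtain $N_G[\nucl_G(S)] = \{w : [\ell_w, r_w] \cap [L, R] \neq \emptyset\}$, which gives $T \subseteq \Span_G(S)$: for $v \in T$, every $u \in N_G[v]$ has its interval meeting $[\ell_v, r_v] \subseteq [L, R]$, so $N_G[v] \subseteq N_G[\nucl_G(S)]$.

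The main obstacle is the reverse inclusion $\Span_G(S) \subseteq T$, which is where tidiness enters crucially. Applying tidiness with $M \coloneqq \nucl_G(S)$ to the containment $N_G[v] \subseteq N_G[\nucl_G(S)] = N_G[M]$ yields $[\ell_v, r_v] \subseteq [\min_{m \in M}\ell_m, \max_{m \in M}r_m] = [L, R]$, using the geometric identity. The hypothesis of tidiness cannot be dropped here: a non-tidy representation could draw an interval artificially long and place it in $\Span_G(S) \setminus T$. The real leverage, however, is the identity $[L, R] = \bigcup_{v \in \nucl_G(S)}[\ell_v, r_v]$, for which the assumption $S \subseteq \Vmaxdeg(G)$ is essential—only maximum-degree endpoints force the intermediate intervals of an induced path to stay inside the bounding interval of the endpoints via Lemma~\ref{lem: paths-in-interval-graphs}(ii).
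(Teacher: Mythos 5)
Your proposal is correct and follows essentially the same route as the paper: parts (i)--(iii) are handled directly from the definitions and via induced (shortest) paths, and part (iv) rests on tidiness together with the fact that the coating's intervals exactly cover $[\min_{s\in S}\ell_s,\max_{s\in S}r_s]$, which the paper obtains from Lemma~\ref{lem: paths-in-interval-graphs}\eqref{itm: max induced path} and connectedness of the coating. Your only real difference is stylistic---you isolate the covering identity $\bigcup_{v\in\nucl_G(S)}[\ell_v,r_v]=[L,R]$ and prove both inclusions explicitly, which makes the role of the hypothesis $S\subseteq\Vmaxdeg(G)$ slightly more transparent but does not change the argument.
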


\begin{proof}
	We prove Part~\eqref{itm: nucl-in-span}.
	 We have $S \subseteq \nucl_G(S)$ by definition. If $v \in \nucl_G(S)$, then $N_G[v] \subseteq N_G[\nucl_G(S)]$, which yields the second inclusion of the statement.
	
	For Part~\eqref{itm: n-of-span-is-n-of-nucl}, observe that Part~\eqref{itm: nucl-in-span} implies $N_G[\nucl_G(S)] \subseteq N_G[\Span_G(S)]$.
		For the other direction observe that every vertex $v$ in $\Span_G(S)$ satisfies $N_G[v] \subseteq N_G[\nucl_G(S)]$ by definition.
		
	Now we prove Part~\eqref{itm: span-is-connected}. 
		By Part~\eqref{itm: nucl-in-span}, it holds that $\nucl_G(S)\subseteq \Span_G(S)$.
		Recall that~$\nucl_G(S)$ induces a connected subgraph of~$G$.
		If $s'$ is a vertex in $\Span_G(S)\setminus \nucl_G(S)$, then there is a neighbor $t'$ of $s'$ in $\nucl_G(S)$. Thus~$\Span_G(S)$ induces a connected subgraph.
		
		Finally, for Part~\eqref{itm:span-is-intuitive-span}, let
		$\{[\ell_v, r_v ]\colon v\in V(G) \}$ be a  tidy interval representation of~$G$.
		If $u \in \Span_G(S)$, then $N_G[u] \subseteq N_G[\nucl_G(S)]$ by definition. 
		Since the interval representation is tidy, it follows that $[\ell_u, r_u] \subseteq [\min_{v \in \nucl_G(S)}\ell_v, \max_{v \in \nucl_G(S)}r_v ]$.
		By Lemma~\ref{lem: paths-in-interval-graphs} Part~\eqref{itm: max induced path}, each $u' \in \nucl_G(S)$ satisfies $[\ell_{u'}, r_{u'}] \subseteq [\min_{s \in S}\ell_s, \max_{s \in S}r_s]$.
		Altogether, we obtain that
		$\Span_G(S) \subseteq  \{v \in V(G)\colon [\ell_v, r_v] \subseteq [\min_{s \in S}\ell_s, \max_{s \in S}r_s]\}$.
		
		It remains to show the other inclusion. For this, note that~$\nucl_G(S)$ is connected and thus $\bigcup_{v \in \nucl_G(S)}[\ell_v, r_v]$ contains~$[\min_{s \in S}\ell_s, \max_{s \in S}r_s]$. The statement now follows from the definition of the span.
\end{proof}

\begin{lemma}\label{lem:tilde:sets:give:clean:clique:sep}
	Let~$G$ be a connected interval graph,~$S\subseteq V(G)$, and $\{[\ell_v, r_v ]\colon v\in V(G) \}$ a tidy interval representation of $G$ such that 
	\[\min_{s \in S}\ell_s = \min_{v \in V(G)}\ell_v~~\text{or}~~\max_{s \in S}r_s =\max_{v \in V(G)}r_v.\]
	
	\begin{enumerate}[(i)]
	\item If $N_G[S] \neq V(G)$ and~$S= \{v \in V(G)\colon [\ell_v, r_v] \subseteq [\min_{s \in S}\ell_s, \max_{s \in S}r_s]\}$, then~$\partition_G(S)$ is a clean clique separation.\label{lem:item:no:tilde}
	\item If $N_G[\Span_G(S)] \neq V(G)$ and~$S \subseteq \Vmaxdeg(G)$, then~$\partition_G(\Span(S))$ is a clean clique separation.\label{lem:item:with:tilde}
	\end{enumerate}
\end{lemma}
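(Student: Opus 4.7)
The plan is to prove Part~(i) directly by analyzing the three pieces $A = S$, $C = N_G(S)$, $B = V(G) \setminus N_G[S]$ inside the given tidy representation, and to then derive Part~(ii) by applying Part~(i) to $S' := \Span_G(S)$. By reflecting the interval representation if necessary (which preserves both the tidy property and the containment structure), I may assume the first disjunct $\min_{s \in S}\ell_s = \min_{v \in V(G)}\ell_v$; call this common value $L$ and set $R := \max_{s \in S}r_s$.

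The heart of Part~(i) is two structural observations. First, every $c \in N_G(S)$ satisfies $\ell_c \leq R < r_c$: since $c \notin S$, the defining identity of~$S$ gives $[\ell_c,r_c] \not\subseteq [L,R]$, and since $L$ is the global minimum left endpoint we have $\ell_c \geq L$, forcing $r_c > R$; on the other hand $c$ has some neighbor $s \in S$ with $[\ell_s,r_s] \subseteq [L,R]$, so non-empty $I_c \cap I_s$ gives $\ell_c \leq r_s \leq R$. Second, every $b \in V(G) \setminus N_G[S]$ satisfies $\ell_b > R$ strictly: the same argument gives $r_b > R$, and if $\ell_b \leq R$ then $R \in [\ell_b,r_b]$; picking any $s^\ast \in S$ with $r_{s^\ast} = R$ then gives $R \in [\ell_{s^\ast},r_{s^\ast}]$ as well, so $b \sim s^\ast$, contradicting $b \notin N_G[S]$.

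Combining these, the point $R$ lies in every $I_c$ (so $\bigcap_{c \in C}I_c \neq \emptyset$ and $C$ is a clique), while $\min_{c \in C}r_c > R$ and $\min_{b \in B}\ell_b > R$ strictly (minima over non-empty finite sets of values each strictly above~$R$). Hence the open interval $(R,\min(\min_{c}r_c,\min_{b}\ell_b))$ is non-empty, and any point~$p$ in it lies in $\bigcap_{c \in C}I_c$ (because $\max_{c}\ell_c \leq R < p \leq \min_{c}r_c$) and strictly between $\max_{a \in A}r_a = R$ and $\min_{b \in B}\ell_b$. This provides the required separating point in the given representation and certifies that $\partition_G(S)$ is a clean clique separation.

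For Part~(ii), set $S' := \Span_G(S)$ and apply Part~(i) to~$S'$. Lemma~\ref{lem:nucleus:and:span}\,(iv), using the tidy representation together with $S \subseteq \Vmaxdeg(G)$, gives $S' = \{v \in V(G) : [\ell_v,r_v] \subseteq [\min_{s \in S}\ell_s, \max_{s \in S}r_s]\}$, and Lemma~\ref{lem:nucleus:and:span}\,(i) gives $S \subseteq S'$. Together these imply $\min_{s' \in S'}\ell_{s'} = \min_{s \in S}\ell_s$ and $\max_{s' \in S'}r_{s'} = \max_{s \in S}r_s$, so both the extremality disjunct and the closure identity required by Part~(i) transfer from~$S$ to~$S'$; the hypothesis $N_G[S'] \neq V(G)$ is given. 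Hence Part~(i) applies to~$S'$ and yields that $\partition_G(\Span_G(S))$ is a clean clique separation. The only delicate step I anticipate is the strict inequality $\ell_b > R$ in the second observation: it uses both that $R$ is actually attained on~$S$ (supplying an $s^\ast$ whose interval contains the point~$R$) and the tight closure-under-containment definition of~$S$, and it is precisely what creates the positive-length gap to the right of~$R$ in which the separating point~$p$ must live.
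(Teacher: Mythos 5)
Your proof is correct and, for Part~(i), is essentially the paper's own argument run in mirror image: the paper normalizes $\max_{s\in S}r_s=\max_{v}r_v$ and observes that every vertex of $N_G(S)$ must contain the point $\min_{s\in S}\ell_s$, whereas you normalize to the left end and locate the common point just beyond $R=\max_{s\in S}r_s$; in both cases the engine is that every vertex outside $S$ pokes out of $[\min_S\ell_s,\max_S r_s]$ only on the non-extremal side. You are in fact more careful than the paper, whose proof of~(i) is one sentence and never verifies that the vertices of $\overline{N_G[S]}$ lie strictly beyond the clique point. For Part~(ii) you do deviate in a small but genuine way: the paper reruns the clique computation directly for $N_G(\Span_G(S))$, while you check via Lemma~\ref{lem:nucleus:and:span} Parts~(\ref{itm: nucl-in-span}) and~(\ref{itm:span-is-intuitive-span}) that $S'=\Span_G(S)$ inherits both the extremality and the closure-under-containment hypothesis and then invoke Part~(i); this reduction is cleaner and avoids duplicating the argument. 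One caveat applies equally to you and to the paper: Definition~\ref{def: clean-clique-separation} literally demands $\bigcap_{c\in C}[\ell_c,r_c]\subseteq(\max_{a\in A}r_a,\min_{b\in B}\ell_b)$, but since every $c\in N_G(S)$ meets some $a\in S$ we always have $\max_{c}\ell_c\le\max_{a}r_a$, so the closed intersection always contains a point outside that open interval and the literal inclusion can never hold; what you actually exhibit --- a point of $\bigcap_{c\in C}[\ell_c,r_c]$ lying strictly between all of $A$ and all of $B$ --- is precisely the property used downstream (e.g.\ in the proof of Lemma~\ref{lem: clean implies linearly ordered nbhds}) and is evidently the intended reading, so this is a defect of the definition rather than of your argument.
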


\begin{proof}
We may assume that $\max_{s \in S}r_s =\max_{v \in V(G)}r_v$.
For the first part, we observe that all vertices in~$N_G(S)$ contain~$\min_{v \in S}\ell_v$.
It follows that the vertices in $N_G(S)$ form a clique and~$\partition_G(S)$ is a clean clique separation.

For the second part, 
	by Lemma~\ref{lem:nucleus:and:span} Part~\eqref{itm:span-is-intuitive-span}, we know that
	$\Span_G(S)  = \{v \in V(G)\colon [\ell_v, r_v] \subseteq [\min_{s \in S}\ell_s, \max_{s \in S}r_s]\}$.
	By definition, $E(\Span_G(S) , \NTildeTwo{G}{S} ) = \emptyset$.
	For $i \in \{1,2\}$, let $c_i$ be a vertex in $\NTildeOne{G}{S} $.
	Since $c_i \in \NTildeOne{G}{S} $ and $\max_{s \in S}r_s =\max_{v \in V(G)}r_v$, we obtain that $\ell_{c_i} < \min_{s \in S}\ell_s \leq r_{c_i} \leq \max_{s \in S}r_s$.
	In particular, the intervals corresponding to $c_1$ and $c_2$ intersect in $\min_{s \in S}\ell_s$. It follows that $\NTildeOne{G}{S} $ is a clique.
 \end{proof}

\subsection{Decomposing an interval graph into a bulk and up to two flanks}

In this subsection, we assume that~$G$ is a connected interval graph.

\begin{definition}
	We define the \emph{bulk} of~$G$ by setting
	\[\bulk(G)\coloneqq \Span_G(\Vmaxdeg(G)).\]
	We call the equivalence classes of~$\approx_{G,\bulk(G)}$ the \emph{flanks} of~$G$
	and denote by
	\[F_G\coloneqq V(G) \setminus N_G[\bulk(G)]\] the set of vertices in the flanks.
\end{definition}

Note that an interval graph has at most two flanks by Lemma~\ref{lem: flanks}.

\begin{lemma}\label{lem: bordering-means-flank-is-side}
	Let $F_1$ and $F_2$ be the two (possibly empty) flanks of $G$.
	For each $b \in \Vmaxdeg(G)$ the (possibly empty) sides of $b$ can be labeled $S^b_1$ and $S^b_2$ such that $F_1 \subseteq S^b_1$ and $F_2 \subseteq S^b_2$.
	For this labeling it holds for $i \in [2]$ that $F_i = \bigcap_{b \in \Vmaxdeg(G)}S^b_i$.
	
	Moreover, a vertex $v \in \Vmaxdeg(G)$ is bordering in $G$ if and only if $S_1^v = F_1$ or $S_2^v = F_2$.
	Conversely, every flank is a side of some bordering vertex.
\end{lemma}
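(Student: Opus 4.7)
Fix a tidy interval representation $\{I_v = [\ell_v,r_v]\colon v \in V(G)\}$ (Lemma~\ref{existence:tidy}), and translate every object in the statement into an explicit interval-endpoint condition. Writing $\ell_\bulk\coloneqq \min_{b\in \Vmaxdeg(G)}\ell_b$ and $r_\bulk\coloneqq \max_{b\in \Vmaxdeg(G)}r_b$, Lemma~\ref{lem:nucleus:and:span}(\ref{itm:span-is-intuitive-span}) identifies $\bulk(G)$ with $\{v\colon I_v\subseteq[\ell_\bulk,r_\bulk]\}$. Since $\bulk(G)\supseteq \Vmaxdeg(G)$ contains a vertex of maximum degree, Lemma~\ref{lem: flanks}(\ref{itm: intuitive-sides}) applied with $S=\bulk(G)$ yields the two flanks $F_1=\{u\colon r_u<\ell_\bulk\}$ and $F_2=\{u\colon \ell_u>r_\bulk\}$. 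Applied with $S=\{b\}$ for each $b\in\Vmaxdeg(G)$, the same lemma provides the labelled sides $S^b_1=\{u\colon r_u<\ell_b\}$ and $S^b_2=\{u\colon \ell_u>r_b\}$.

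With these formulas in hand, the first two assertions are routine. Because $\ell_\bulk\le\ell_b$ and $r_b\le r_\bulk$ for every $b\in\Vmaxdeg(G)$, the inclusions $F_1\subseteq S^b_1$ and $F_2\subseteq S^b_2$ are immediate, and $\bigcap_{b\in\Vmaxdeg(G)}S^b_1=\{u\colon r_u<\min_b\ell_b\}=F_1$ (and symmetrically for $F_2$). For the moreover part, I would invoke Lemma~\ref{lem:order:of:max:deg:vertices}(\ref{item:tidy:is:oredered}) to pick a linear order $v_1,\dots,v_k$ of $\Vmaxdeg(G)$ with $I_{v_i}\preceq I_{v_{i+1}}$ in our tidy representation (after a possible global reversal), so that $\ell_{v_1}=\ell_\bulk$ and $r_{v_k}=r_\bulk$. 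By part (ii) of that lemma, the bordering vertices are exactly those in the twin equivalence class of $v_1$ or of $v_k$. A short tidiness argument (apply tidiness with $M=\{w\}$ to any pair $v,w$ with $N_G[v]=N_G[w]$) shows that twins share the same interval; hence every bordering vertex satisfies $\ell_v=\ell_\bulk$ or $r_v=r_\bulk$, which by the explicit formulas gives $S^v_1=F_1$ or $S^v_2=F_2$.

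The main obstacle is the converse: if $v\in\Vmaxdeg(G)$ has $\ell_v>\ell_\bulk$, I must exhibit a vertex in $S^v_1\setminus F_1$ (the case $r_v<r_\bulk$ being symmetric). Since $\deg_G(v)=\deg_G(v_1)=\maxdeg(G)$, we have $|N_G[v_1]\setminus N_G[v]|=|N_G[v]\setminus N_G[v_1]|$. If this common value were zero then $N_G[v]=N_G[v_1]$, and the tidiness argument above would force $I_v=I_{v_1}$, contradicting $\ell_v>\ell_{v_1}=\ell_\bulk$. Hence some $u\in N_G[v_1]\setminus N_G[v]$ exists; its interval $I_u$ overlaps $I_{v_1}$ but is disjoint from $I_v$. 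From $I_{v_1}\preceq I_v$ we get $r_{v_1}\le r_v$, so the alternative $\ell_u>r_v$ is impossible (it would yield $\ell_u>r_v\ge r_{v_1}\ge\ell_u$); thus $r_u<\ell_v$, while overlap of $I_u$ with $I_{v_1}$ gives $r_u\ge\ell_{v_1}=\ell_\bulk$. This places $u\in S^v_1\setminus F_1$, completing the argument.
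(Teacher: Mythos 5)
Your proposal is correct and follows essentially the same route as the paper: fix a tidy representation, identify the flanks and the sides of each $b\in\Vmaxdeg(G)$ with the explicit endpoint sets supplied by Lemma~\ref{lem: flanks}, and read off the containments and the intersection formula; the ``moreover'' part then reduces to interval extremality of bordering vertices. One substantive difference is worth recording: for the direction ``$v$ not bordering $\Rightarrow S_1^v\neq F_1$'' the paper exhibits as witness a vertex of $\Vmaxdeg(G)$ lying entirely to the left of $I_v$, which need not exist (all maximum-degree vertices preceding $v$ in the $\preceq$-order may still intersect $I_v$), whereas your witness $u\in N_G[v_1]\setminus N_G[v]$ always exists by the degree count and lands in $S_1^v\setminus F_1$ exactly as you argue; this step of yours is the more robust one. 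The only thing you should add is the one-line implication needed to close the biconditional, namely that ``$\ell_v=\min_{b\in\Vmaxdeg(G)}\ell_b$ or $r_v=\max_{b\in\Vmaxdeg(G)}r_b$'' forces $v$ to be bordering (equivalently, a non-bordering $v$ has a strictly interior interval): you prove the converse implication, but not this one. It follows immediately from what you already have, since $I_{v_1}\preceq I_v$ together with $\ell_v=\ell_{v_1}$ gives $I_{v_1}\subseteq I_v$, hence $N_G[v_1]\subseteq N_G[v]$, hence equality because both vertices have maximum degree, so $v$ is a twin of $v_1$ and therefore bordering.
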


\begin{proof}
	Consider a tidy interval representation $\{[\ell_v, r_v]\colon v \in V(G) \}$ of~$G$.
	For each~$b\in \Vmaxdeg(G)$, define~$S_1^b \coloneqq  \{u \in V(G)\colon r_u < \ell_b \}$ and
	$S_2^b \coloneqq \{w \in V(G)\colon \ell_w > r_b \}$. By Lemma~\ref{lem: flanks}, we know that~$S_1^b$ and~$S_2^b$ are the sides of~$b$.
	
	Furthermore, define~$F_1 \coloneqq \{u \in V(G)\colon r_u < \min_{s \in \Vmaxdeg(G)}\ell_s \}$ and
	$F_2\coloneqq \{w \in V(G)\colon \ell_w > \max_{s \in \Vmaxdeg(G)}r_s \}$. Again by Lemma~\ref{lem: flanks}, we know that~$F_1$ and~$F_2$ are the flanks of~$G$. We obtain that $F_i = \bigcap_{b \in \Vmaxdeg(G)}S^b_i$, showing the first part of the lemma.

	We also obtain that if~$b$ is bordering, then $S_1^b = F_1$ or $S_2^b = F_2$. On the other hand, if~$b$ is not bordering, then by Lemma~\ref{lem:order:of:max:deg:vertices}, there is a vertex~$v \in \Vmaxdeg(G)$ and a vertex~$x\in N(v)$ with~$r_x < \ell_b$. Thus~$x \in S_1^b \setminus S_1^v \subseteq S_1^b\setminus F_1$ (we use the first part of the statement to obtain the latter inclusion).
	Similarly there is a vertex~$x'\in  S_2^b\setminus F_2$, which shows that neither $S_1^b = F_1$ nor $S_2^b = F_2$.
	
	Finally, we obtain the last sentence of the lemma by considering vertices in~$\Vmaxdeg(G)$ that minimize~$\ell_v$ or maximize~$r_v$.
\end{proof}

\begin{lemma}\label{lem:side:versus:flank} 
	Let $G$ be a connected interval graph. Let~$v \in \Vmaxdeg(G)$ and let~$M$ be a side of $v$. The following are equivalent:
	\begin{enumerate}[(i)] 
		\item $M$ is a flank of $G$. \label{itm: one}
		\item $N_G[M] \cap \Vmaxdeg(G) = \emptyset$, and \label{itm: two}
		\item $v$ is bordering and no vertex in~$N_G[M]$ is bordering. \label{itm: three}
	\end{enumerate}
	
\end{lemma}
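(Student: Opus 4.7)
The plan is to fix a tidy interval representation $\{[\ell_w, r_w]\colon w \in V(G)\}$ of $G$ and exploit the concrete descriptions of sides and flanks it provides via Lemma~\ref{lem: flanks} and Lemma~\ref{lem: bordering-means-flank-is-side}. In this representation the two sides of any $b \in \Vmaxdeg(G)$ are the non-empty ones among $S_1^b = \{u\colon r_u < \ell_b\}$ and $S_2^b = \{w\colon \ell_w > r_b\}$, and the two flanks are the non-empty ones among $F_1 = \{u\colon r_u < \min_{s \in \Vmaxdeg(G)}\ell_s\}$ and $F_2 = \{w\colon \ell_w > \max_{s \in \Vmaxdeg(G)}r_s\}$. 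Lemma~\ref{lem: bordering-means-flank-is-side} supplies $F_i \subseteq S_i^b$ for every $b$, together with the characterization that $b$ is bordering if and only if $S_1^b = F_1$ or $S_2^b = F_2$.

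For (i) $\Rightarrow$ (ii), take without loss of generality $M = F_1$. Every $x \in N_G[F_1]$ either lies in $F_1$ or is adjacent to some $u \in F_1$, so there is a witness $u$ (possibly $u = x$) with $\ell_x \leq r_u < \min_{s \in \Vmaxdeg(G)}\ell_s$. If $x$ were in $\Vmaxdeg(G)$, then $\ell_x \geq \min_{s \in \Vmaxdeg(G)}\ell_s$, contradicting the above bound, so $x \notin \Vmaxdeg(G)$. For (ii) $\Rightarrow$ (i), assume $M = S_1^v$ (the case $M = S_2^v$ is symmetric). The inclusion $F_1 \subseteq M$ is immediate from Lemma~\ref{lem: bordering-means-flank-is-side}. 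For the reverse, fix $u \in M$ and any $s \in \Vmaxdeg(G)$: hypothesis (ii) forbids $s \in N_G[M]$, so $s$ is neither in $M$ nor adjacent to $u$ and the intervals $[\ell_s, r_s]$ and $[\ell_u, r_u]$ are disjoint; the case $r_s < \ell_u$ would give $r_s < r_u < \ell_v$ and thereby place $s \in M$, a contradiction, leaving $\ell_s > r_u$. Since $s$ was arbitrary, $u \in F_1$, and hence $M \subseteq F_1$.

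The direction (ii) $\Rightarrow$ (iii) is then short: bordering vertices lie in $\Vmaxdeg(G)$, so the second clause of (iii) follows from (ii), and since we have shown $M = F_1 = S_1^v$, Lemma~\ref{lem: bordering-means-flank-is-side} yields that $v$ is bordering. The delicate direction is (iii) $\Rightarrow$ (ii), which I intend to prove by contradiction: suppose there exists $x \in N_G[M] \cap \Vmaxdeg(G)$; by (iii), $x$ is not bordering. Taking $M = S_1^v$ (the other case being symmetric), the inclusion $x \in N_G[M]$ forces $\ell_x < \ell_v$. Let $y^*$ be the leftmost bordering vertex in the order of Lemma~\ref{lem:order:of:max:deg:vertices}. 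As $x$ is not bordering, $y^*$ is strictly before $x$ in this order, so $\ell_{y^*} \leq \ell_x$ and $r_{y^*} \leq r_x$. If $x \in M$ then $r_{y^*} \leq r_x < \ell_v$ places $y^*$ itself in $M \subseteq N_G[M]$. Otherwise $x$ is adjacent to some $u \in M$ with $\ell_x \leq r_u < \ell_v$, and either $r_{y^*} < \ell_v$ again gives $y^* \in M$, or $r_{y^*} \geq \ell_v > r_u \geq \ell_{y^*}$ forces $[\ell_{y^*}, r_{y^*}] \cap [\ell_u, r_u] \neq \emptyset$ so that $y^*$ is adjacent to $u$. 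Either way, the bordering vertex $y^*$ lies in $N_G[M]$, contradicting (iii).

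The closing converse claim follows immediately from Lemma~\ref{lem: bordering-means-flank-is-side}: a non-empty flank $F_1$ equals $S_1^{b^*}$ for the leftmost bordering vertex $b^*$, and symmetrically $F_2 = S_2^{b^{**}}$ for the rightmost. The main obstacle I anticipate is the bookkeeping in the (iii) $\Rightarrow$ (ii) direction, where the hypothesis only asserts that $v$ is bordering on \emph{some} side while we must produce a bordering vertex in $N_G[M]$ on the specific side of $M$; committing to a single tidy representation and systematically translating between the intrinsic notions and the concrete sets $S_i^b$, $F_i$ via Lemma~\ref{lem: bordering-means-flank-is-side} should keep this manageable.
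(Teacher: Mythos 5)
Your proof is correct and follows essentially the same route as the paper's: both arguments fix a tidy interval representation and reduce everything to the explicit endpoint descriptions of sides and flanks supplied by Lemma~\ref{lem: flanks}, Lemma~\ref{lem:order:of:max:deg:vertices}, and Lemma~\ref{lem: bordering-means-flank-is-side}, with the crucial step in both being that a maximum-degree vertex in $N_G[M]$ forces an extremal (bordering) maximum-degree vertex into $N_G[M]$. The only differences are organizational — you prove the two biconditionals (i)$\Leftrightarrow$(ii) and (ii)$\Leftrightarrow$(iii) with explicit endpoint bookkeeping where the paper runs the cycle (i)$\Rightarrow$(ii)$\Rightarrow$(iii)$\Rightarrow$(i) and delegates the last step to Lemma~\ref{lem:nucleus:and:span}\eqref{itm:span-is-intuitive-span} — but the substance is the same.
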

\begin{proof} (\eqref{itm: one} $\Rightarrow$ \eqref{itm: two})
	If~$M$ is a flank of~$G$, then~$N_G[M] \cap \bulk(G) =\emptyset=  M\cap N_G[\bulk(G)]$. Thus~$N_G[M]\cap \Vmaxdeg(G)= \emptyset$ since~$\Vmaxdeg(G)\subseteq \bulk(G)$.
	
	\medskip \noindent
	(\eqref{itm: two} $\Rightarrow$ \eqref{itm: three})
	We assume~$N_G[M] \cap \Vmaxdeg(G) = \emptyset$ and consider a tidy interval representation of~$G$.
	Suppose towards a contradiction that~$v$ is not bordering. Then every side of~$v$ contains a vertex adjacent to a vertex of~$\Vmaxdeg(G)$ (Lemma~\ref{lem: bordering-means-flank-is-side}), a contradiction. Vertices in~$M$ are not adjacent to bordering vertices since these are of maximum degree.

	\medskip
	\noindent
	(\eqref{itm: three} $\Rightarrow$ \eqref{itm: one})
	By Lemma~\ref{lem: flanks} Part~\eqref{itm: intuitive-sides}, the two sides of~$v$ are disjoint.
	If no vertex of~$N_G[M]$ is bordering, then by Lemma~\ref{lem:order:of:max:deg:vertices}, no vertex of~$N_G[M]$ has degree~$\Delta(G)$. It follows by Lemma~\ref{lem:nucleus:and:span} Part~\eqref{itm:span-is-intuitive-span} and the previous lemma that~$M\subseteq F_G$. Then, by Lemma~\ref{lem: bordering-means-flank-is-side}, we conclude that~$M$ is a flank of~$G$. 
\end{proof}

\begin{lemma} \label{lem: connected-vmaxdeg}
	Let $G$ be an interval graph.
	\begin{enumerate}[(i)]
		\item If $\Vmaxdeg(G)$ is connected, then $N[\bulk(G)] = N[\Vmaxdeg(G)]$.
		\item If $H$ is an induced subgraph of $G$ with $\maxdeg(H) = \maxdeg(G)$, then $N_H[\bulk(H)] \subseteq N_G[\bulk(G)]$.
		\item If $v \in F_G 
		$, then $N_{G-v}[\bulk(G-v)] = N_G[\bulk(G)]$. \label{itm:minus-flank-vertex}
	\end{enumerate}
\end{lemma}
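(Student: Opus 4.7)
The plan is to handle each part separately by systematically reducing to statements about the coating $\nucl(\Vmaxdeg(G))$, using the identity $N_G[\bulk(G)] = N_G[\nucl_G(\Vmaxdeg(G))]$ from Lemma~\ref{lem:nucleus:and:span}\eqref{itm: n-of-span-is-n-of-nucl}. Throughout, I rely on the fact (Lemma~\ref{lem:nucleus:and:span}\eqref{itm: nucl-in-span}) that $\Vmaxdeg(G)\subseteq \nucl_G(\Vmaxdeg(G)) \subseteq \bulk(G)$.

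For part~(i), the inclusion $N_G[\Vmaxdeg(G)] \subseteq N_G[\bulk(G)]$ is immediate from $\Vmaxdeg(G) \subseteq \bulk(G)$. For the converse, I first observe that if $G[\Vmaxdeg(G)]$ is connected, then $\Vmaxdeg(G)$, and hence its coating, its span, and their closed neighborhoods, all lie in a single component of $G$; so I can reduce to the connected case. Then I fix a tidy interval representation and invoke Lemma~\ref{lem:order:of:max:deg:vertices} to linearly order $\Vmaxdeg(G) = \{v_1, \ldots, v_k\}$ with $I_{v_i} \preceq I_{v_{i+1}}$. Connectedness of $G[\Vmaxdeg(G)]$ forces consecutive intervals to overlap (otherwise no other max-degree vertex, being on one side or the other in the linear order, could bridge them), so $\bigcup_{i=1}^{k} I_{v_i}$ is a single real interval $[\ell_{v_1}, r_{v_k}]$. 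Lemma~\ref{lem: paths-in-interval-graphs}\eqref{itm: max induced path} guarantees that every vertex of $\nucl(\Vmaxdeg(G))$ has its interval contained in $[\ell_{v_1}, r_{v_k}]$. Hence for any $w \in N_G[\nucl(\Vmaxdeg(G))]$, the interval $I_w$ meets $[\ell_{v_1}, r_{v_k}]$ and therefore intersects some $I_{v_i}$, showing $w \in N_G[\Vmaxdeg(G)]$.

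For part~(ii), note that every max-degree vertex of $H$ has the same degree in $G$, because $\deg_G \geq \deg_H = \maxdeg(H) = \maxdeg(G)$ forces equality; thus $\Vmaxdeg(H) \subseteq \Vmaxdeg(G)$. Every induced path in $H$ remains induced in $G$, so $\nucl_H(\Vmaxdeg(H)) \subseteq \nucl_G(\Vmaxdeg(G))$, and every $H$-edge is a $G$-edge, giving $N_H[\nucl_H(\Vmaxdeg(H))] \subseteq N_G[\nucl_G(\Vmaxdeg(G))]$. Two applications of Lemma~\ref{lem:nucleus:and:span}\eqref{itm: n-of-span-is-n-of-nucl} then yield the claim. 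For part~(iii), set $H = G - v$. Since $v \in F_G = V(G)\setminus N_G[\bulk(G)]$ and $\Vmaxdeg(G) \subseteq \bulk(G)$, the vertex $v$ has no max-degree neighbor, so deleting it preserves the degree of every vertex in $\Vmaxdeg(G)$; this yields $\maxdeg(H) = \maxdeg(G)$ and $\Vmaxdeg(H) = \Vmaxdeg(G)$. Part~(ii) gives one inclusion. For the other, take $w \in N_G[\bulk(G)] = N_G[\nucl_G(\Vmaxdeg(G))]$, so $w$ is adjacent or equal to some $x \in \nucl_G(\Vmaxdeg(G))$ witnessed by an induced $G$-path $P$ between two max-degree vertices. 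Since $V(P) \subseteq \nucl_G(\Vmaxdeg(G)) \subseteq \bulk(G) \subseteq N_G[\bulk(G)]$ and $v \notin N_G[\bulk(G)]$, the vertex $v$ is not on $P$; hence $P$ survives as an induced path in $H$, so $x \in \nucl_H(\Vmaxdeg(H))$, and since $w \neq v$ the adjacency $wx$ survives too, giving $w \in N_H[\bulk(H)]$.

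The main obstacle is part~(i): parts (ii) and (iii) are essentially bookkeeping once the definitions of coating and span are unwound, but (i) genuinely needs the interval-representation structure --- specifically the linear order on $\Vmaxdeg(G)$ from Lemma~\ref{lem:order:of:max:deg:vertices} combined with the induced-path containment from Lemma~\ref{lem: paths-in-interval-graphs}\eqref{itm: max induced path} --- to conclude that coating does not extend closed neighborhoods beyond those of the max-degree vertices themselves.
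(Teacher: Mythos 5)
Your proposal is correct and follows essentially the same route as the paper: part~(i) via an interval representation, Lemma~\ref{lem: paths-in-interval-graphs}\eqref{itm: max induced path}, and connectedness of $G[\Vmaxdeg(G)]$ to see that the max-degree intervals cover the whole range containing the coating; part~(ii) via preservation of induced paths; and part~(iii) by observing that a flank vertex lies on no induced path between maximum-degree vertices, so the coating and its closed neighborhood are unchanged. The only nitpick is in part~(i): because the linear order of Lemma~\ref{lem:order:of:max:deg:vertices} allows consecutive twins whose intervals need not be $\preceq$-comparable, the union of the max-degree intervals is $[\min_i \ell_{v_i}, \max_i r_{v_i}]$ rather than literally $[\ell_{v_1}, r_{v_k}]$, which does not affect the argument.
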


\begin{proof}
	We prove the first statement.
	Assume that $\Vmaxdeg(G)$ is connected.
	Since $\Vmaxdeg(G) \subseteq \bulk(G)$, we obtain $N_G[\Vmaxdeg(G)] \subseteq N_G[\bulk(G)]$.
	We argue the other direction.
	For~$b\in \nucl_G(\Vmaxdeg(G))$, we show that~$N_G[b]\subseteq N_G[\Vmaxdeg(G)]$.
	Let $P$ be an induced $p_1$-$p_2$ path containing~$b$ as a vertex with~$p_1,p_2\in \Vmaxdeg(G)$. Consider an interval representation of~$G$.
	By Lemma~\ref{lem: paths-in-interval-graphs} Part~\eqref{itm: induced-paths-of-int-graphs-2}, we have that
	$[\ell_b, r_b] \subseteq [\min\{\ell_{p_1}, \ell_{p_2}\}, \max\{r_{p_1}, r_{p_2}\}]$. 
	This means that the interval of every neighbor~$x$ of~$b$ intersects $[\min\{\ell_{p_1}, \ell_{p_2}\}, \max\{r_{p_1}, r_{p_2}\}]$. Since~$G[\Vmaxdeg(G)]$ is connected, every point that is contained in $[\min\{\ell_{p_1}, \ell_{p_2}\}, \max\{r_{p_1}, r_{p_2}\}]$ is contained in some interval corresponding to a vertex of maximum degree. Thus~$b,x\in N_G[\Vmaxdeg(G)]$, which shows~$N_G[b]\subseteq N_G[\Vmaxdeg(G)]$.
	
	\medskip
	\noindent
	For the second statement let $H$ be an induced subgraph of $G$ with $\maxdeg(H) = \maxdeg(G)$.
	Since $H$ is induced, every induced path of $H$ is also an induced path of $G$.
	In particular $\nucl_H(\Vmaxdeg(H))\subseteq \nucl_G(\Vmaxdeg(G))$.
	Combining this with Lemma~\ref{lem:nucleus:and:span} Part~\eqref{itm: n-of-span-is-n-of-nucl}, we obtain 
	\[N_H[\bulk(H)] = N_H[\nucl_H(\Vmaxdeg(H))] \subseteq N_G[\nucl_G(\Vmaxdeg(G))]= N_G[\bulk(G)].\]
	
	\medskip
	\noindent
	For the third statement, the second part implies~$N_{G-v}[\bulk(G-v)] \subseteq N_G[\bulk(G)]$.
	We have $N_G[\bulk(G)] = N_G[\nucl_G(\Vmaxdeg(G))]$ by Lemma~\ref{lem:nucleus:and:span} Part~\eqref{itm: n-of-span-is-n-of-nucl}.
	So for~$x\in N_G[\bulk(G)]$ there exists~$y\in \nucl_G(\Vmaxdeg(G))$ adjacent to~$x$ or $x \in \nucl_G(\Vmaxdeg(G))$. In the former case~$y\in \nucl_{G-v}(\Vmaxdeg(G-v))$ since~$\nucl_{G}(\Vmaxdeg(G))= \nucl_{G-v}(\Vmaxdeg(G-v))$ and~$v \in F_G$.
	Hence,~$x\in N_{G-v}[\bulk(G-v)]$.
\end{proof}

\section{Reconstruction from annotated induced subgraphs}\label{sec:annotated:subgraphs}

Our main technique to reconstruct a given interval graph is to find a clean clique separation that decomposes the graph into two parts.
We determine the parts together with information on the vertices' original degrees and argue that under certain conditions, which turn out to be often satisfied in interval graphs, we can reconstruct the original graph.

An \emph{annotated graph} is a pair~$(H,\lambda)$, consisting of a graph~$H$ and a map~$\lambda\colon V(H)\rightarrow \mathbb{N}\cup \{\bot\}$. An \emph{isomorphism} between two annotated graphs~$(H_1,\lambda_1)$ and~$(H_2,\lambda_2)$ is a graph isomorphism~$\varphi$ from~$H_1$ to~$H_2$ with~$\lambda_2(\varphi(h))= \lambda_1(h)$.
An \emph{annotated induced subgraph} of a graph~$G$ is an annotated graph~$(H,\lambda)$ with~$H$ an induced subgraph of~$G$ where~$\lambda$ satisfies for all~$h\in V(H)$ that~$\lambda(h) = d_G(h)$ if~$\lambda(h) \neq \bot$ and~$d_H(h)=d_G(h)$ if~$\lambda(h) =\bot$.  
The interpretation is that vertices can only be annotated with a number that is their original degree in the graph~$G$. However, vertices that have the same degree in~$H$ and~$G$ do not necessarily have to be annotated.
Slightly abusing notation we will denote by~$G[V']$ the annotated graph induced by a set of vertices~$V'\subseteq V(G)$. The definition of the map~$\lambda$ will always be clear from the context.

Every clean clique separation~$(A,C,B)$ in a graph~$G$ gives rise to two annotated induced subgraphs $H_X \coloneqq G[X\cup C]$ with annotations $\lambda_X$ for $X \in \{A,B\}$ with
\begin{align*}
	\lambda_X(h)= \begin{cases}
		\bot &\text{if ~$h\notin C$ and}\\
		\deg_G(h) &\text{otherwise.}
	\end{cases}
\end{align*}
 In particular, we can recover from each of the graphs $H_A$ and $H_B$ which vertices belong to~$C$ and what their degree in $G$ is.

\begin{lemma}[Reconstruction-by-Separation]\label{lem:reconst:clique:sep:implies:reconst}
If~$(A,C,B)$ and~$(A',C',B')$ are clean clique separations of two graphs~$G$ and~$G'$, respectively, such that~$H_A\cong H_{A'}$ and~$H_B\cong H_{B'}$ as annotated graphs, then~$G\cong G'$.
\end{lemma}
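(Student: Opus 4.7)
The plan is to glue the two annotated isomorphisms $\varphi_A\colon H_A \to H_{A'}$ and $\varphi_B\colon H_B \to H_{B'}$ along $C$ into a single graph isomorphism $\psi\colon G \to G'$. Since the annotation $\lambda_X$ is non-$\bot$ precisely on $C$ in $H_X$ (and on $C'$ in $H_{X'}$), both $\varphi_A$ and $\varphi_B$ restrict to bijections $C \to C'$, and annotation preservation yields $\deg_G(c) = \deg_{G'}(\varphi_A(c)) = \deg_{G'}(\varphi_B(c))$ for every $c \in C$. I would then define $\psi(v) \coloneqq \varphi_A(v)$ for $v \in A\cup C$ and $\psi(v) \coloneqq \varphi_B(v)$ for $v\in B$, and verify that $\psi$ is a graph isomorphism. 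The principal obstacle is that $\varphi_A$ and $\varphi_B$ need not agree as bijections on $C$, so the arbitrary choice $\psi|_C = \varphi_A|_C$ could a priori conflict with the $B$-side edge structure that $\varphi_B$ encodes on $C$.

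With $\psi$ so defined, well-definedness is immediate since $A \dunion C \dunion B = V(G)$ and $A' \dunion C' \dunion B' = V(G')$. All edges with both endpoints in $A \cup C$ are preserved because $\psi|_{A\cup C}=\varphi_A$ is a graph isomorphism $H_A\to H_{A'}$, which in particular sends the clique $C$ to the clique $C'$. All edges with both endpoints in $B$ are preserved because $\psi|_B$ comes from the isomorphism $\varphi_B$. There are no edges between $A$ and $B$ or between $A'$ and $B'$ by the definition of a separation. Thus only the edges between $C$ and $B$ require a nontrivial check.

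The heart of the argument is the claim that for every $c \in C$ one has $N_{G'}(\varphi_A(c)) \cap B' = N_{G'}(\varphi_B(c)) \cap B'$; this is where the clean-clique hypothesis enters via Lemma~\ref{lem: clean implies linearly ordered nbhds}. That lemma, applied to the clean clique separation $(A',C',B')$, tells us that the $B'$-neighborhoods of vertices in $C'$ are linearly ordered by inclusion, so any two such neighborhoods of equal size coincide. It therefore suffices to show that both sets have size $|N_G(c) \cap B|$. For $\varphi_B(c)$ this is immediate since $\varphi_B$ bijects $N_G(c)\cap B$ with $N_{G'}(\varphi_B(c))\cap B'$. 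For $\varphi_A(c)$ it follows from a short degree count: the isomorphism $\varphi_A$ gives $|N_{G'}(\varphi_A(c))\cap A'|=|N_G(c)\cap A|$, and combining this with $|C'|=|C|$, the fact that $C'$ is a clique, and the annotation equality $\deg_{G'}(\varphi_A(c))=\deg_G(c)$ yields $|N_{G'}(\varphi_A(c)) \cap B'| = |N_G(c) \cap B|$.

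Once the claim is in hand, preservation of the $C$-$B$ edges reduces to a chain of equivalences: $\{c,b\}\in E(G)$ iff $b\in N_G(c)\cap B$ iff $\varphi_B(b) \in N_{G'}(\varphi_B(c))\cap B'$ iff $\psi(b) \in N_{G'}(\psi(c))$ iff $\{\psi(c),\psi(b)\} \in E(G')$, where the third equivalence uses the claim. The main obstacle, the potential mismatch between $\varphi_A|_C$ and $\varphi_B|_C$, is exactly what the claim resolves: linearly ordered neighborhoods force the size of a $B'$-neighborhood to determine it uniquely, so the degree annotations alone guarantee that $\varphi_A(c)$ and $\varphi_B(c)$ have identical $B'$-neighborhoods, even when they represent distinct vertices of $C'$.
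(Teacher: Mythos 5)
Your proof is correct and rests on the same core argument as the paper's: the degree annotation together with a degree count over the clique $C'$ shows that $\varphi_A(c)$ and $\varphi_B(c)$ have equally many neighbours in $B'$, and Lemma~\ref{lem: clean implies linearly ordered nbhds} then forces their $B'$-neighbourhoods to coincide. The paper packages this as an iterative twin-swap that makes $\varphi_B$ agree with $\varphi_A$ on $C$, whereas you glue the two maps directly and verify the $C$--$B$ edges; this is only a presentational difference.
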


\begin{proof}
By Lemma~\ref{lem: clean implies linearly ordered nbhds} the clean clique separations have linearly ordered neighborhoods.

Let~$\varphi_A\colon V(H_A) \rightarrow V(H_A')$ and~$\varphi_B\colon V(H_B) \rightarrow V(H_B')$ be isomorphisms respecting the annotations.
In particular,~$\varphi_A(C)= C'$ and~$\varphi_B(C)=C'$.

Consider the number of vertices~$v\in C$ for which~$\varphi_A(v)\neq \varphi_B(v)$. For the case in which this number is larger than zero, we explain how to alter~$\varphi_B$ to reduce this number by at least one:
choose $c \in C$ with~$\varphi_A(c)\neq \varphi_B(c)$ and set~$\widetilde\varphi_B\colon V(H_B) \rightarrow V(H_B')$ with \[\widetilde\varphi_B(v) = \begin{cases}
\varphi_A(c) & \text{if } v=c\\
\varphi_B(c) & \text{if } \varphi_B(v)= \varphi_A(c)\\
\varphi_B(v) & \text{otherwise.}
\end{cases}\]

It is easy to see that the number of vertices of~$C$ for which~
$\varphi_A(v) \neq \widetilde\varphi_B(v)$ is smaller than the respective number for~$\varphi_A$ and~$\varphi_B$.
We need to argue that~$\widetilde\varphi_B$ is an annotation-respecting isomorphism. The annotation is respected since~$\deg_{G'}(\varphi_A(c)) = \deg_G(c) = \deg_{G'}(\varphi_{B}(c))$. To argue that the new map~$\widetilde\varphi_B$ is an isomorphism it suffices to argue that~$\varphi_B(c)$ and~$\widetilde\varphi_B(c)= \varphi_A(c)$ are twins. Indeed, both vertices have the same degree~$d$ in~$G$, since the maps~$\varphi_B$ and~$\widetilde\varphi_B$ respect annotations. Moreover~$\deg_{H_{B'}}(\varphi_B(c))= \deg_{H_B}(c) = d-\deg_{G[A]}(c)+|C|-1= d-\deg_{G[A']}(\varphi(c))+|C|-1= \deg_{H_{B'}}(\widetilde\varphi_{B}(c))$.
Since neighborhoods are linearly ordered it follows that~$\varphi_B(c)$ and~$\widetilde\varphi_B(c)$ are twins in $H_B$.

Finally, assume that the number of vertices~$v\in C$ for which~$\varphi_A(v)\neq \varphi_B(v)$ is zero.
Then~$\varphi_A$ and~$\varphi_B$ agree on~$C$ and their common extension to~$V(G)$ is an isomorphism.
\end{proof}

Our main tool to show that interval graphs are reconstructible will be to determine (up to isomorphism) annotated graphs~$H_A$ and~$H_B$ which provably arise as the two subgraphs of a clean clique separation~$(A,C,B)$ of~$G$. This implies that they have linearly ordered neighborhoods. By the previous lemma this implies that the graph is reconstructible. The challenge is of course to determine~$C$ and the corresponding annotated subgraphs from the deck.

A reoccurring strategy will be to locate a card~$G_v$ of a vertex~$v$ in~$B$ that is ``as far away as possible'' from~$C$. 
If~$v$ is not the only vertex in~$B$ and we can locate~$A$ and~$C$ in the card, then there is a method to recover~$H_A$ with annotation as follows.

\begin{lemma}(Distant Vertex Lemma)\label{lemma:farthest:away:and:at:least:two:reconstruct:other:side}
For~$j\in \{1,2\}$, let~$G_j$ be 
an interval graph with a clean clique separation~$(A_j,C_j,B_j)$ with $|B_j|\geq2$.
Let~$b_j\in B_j$ be a vertex such that $G_j-b_j$ is connected and $N_{G_j}(b_j) \cap C_j$ is as small as possible under this condition.  
If~$\leftmset d_{G_1}(u)\colon u\in N_{G_1}(b_1)\rightmset=\leftmset d_{G_2}(u)\colon u\in N_{G_2}(b_2)\rightmset$ and
there is an isomorphism~$\varphi$ from~$G_1-b_1$ to~$G_2-b_2$ with~$\varphi(A_1)= A_2$ and~$\varphi(C_1)= C_2$,
then~$H_{A_1} \cong H_{A_2}$ as annotated subgraphs of~$G_1$ and~$G_2$, respectively.
\end{lemma}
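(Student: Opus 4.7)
The approach is to take $\psi := \varphi|_{A_1 \cup C_1}$ as the candidate isomorphism $H_{A_1} \to H_{A_2}$ and argue that, possibly after composing with an $H_{A_2}$-automorphism that permutes twin classes of $C_2$, it preserves annotations. Since $b_j \notin A_j \cup C_j$, the map $\psi$ is already a graph isomorphism of the underlying graphs, and the $\bot$-annotations on $A_1$ are trivially preserved since $\psi(A_1) = A_2$. For $c \in C_1$, using that $\varphi$ preserves degrees in $G_j - b_j$, the annotation discrepancy reduces to
\[\lambda_1(c) - \lambda_2(\psi(c)) \;=\; d_{G_1}(c) - d_{G_2}(\psi(c)) \;=\; \mathbf{1}[c \in N(b_1)] - \mathbf{1}[\psi(c) \in N(b_2)].\]
Hence the entire task is to show that $\psi(N(b_1) \cap C_1)$ coincides with $N(b_2) \cap C_2$ up to a permutation within $H_{A_2}$-twin classes of $C_2$.

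The key structural input is Lemma~\ref{lem: clean implies linearly ordered nbhds}: the family $\{N(c) \cap B_j \colon c \in C_j\}$ forms a chain under inclusion. Consequently $N(b_j) \cap C_j$ is an upper set of $C_j$ in the preorder $\leq_B$ given by $|N(c) \cap B|$, and the coarsening of $\leq_B$ obtained by passing from $|N(c) \cap B_j|$ to $|N(c) \cap (B_j \setminus \{b_j\})|$ is preserved by $\varphi$. The annotation-matching problem therefore localizes to a counting statement: for each $H_A$-twin class $T$ of $C_1$ (twinness being equality of $N(c) \cap A$) we must have $|N(b_1) \cap T| = |N(b_2) \cap \psi(T)|$, and summing over $T$ we need $k_1 := |N(b_1) \cap C_1| = |N(b_2) \cap C_2| =: k_2$.

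The core of the proof is this counting statement, and this is where the hypotheses $|B_j| \geq 2$, minimality of $b_j$, and the degree multiset identity $\leftmset d_{G_1}(u)\colon u \in N(b_1)\rightmset = \leftmset d_{G_2}(u)\colon u \in N(b_2)\rightmset$ all enter. The strategy is to use $|B_j| \geq 2$ together with the minimality of $b_j$ to identify, inside $G_j - b_j$, a reference vertex $b'_j \in B_j \setminus \{b_j\}$ whose $G_j$-neighborhood in $C_j$ is controlled; transporting this reference via $\varphi$ and symmetrically invoking the minimality on both sides yields $k_1 = k_2$, while the multiset identity constrains the fine distribution of $b_j$'s $C$-adjacencies across twin classes. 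Once the counting statement holds, we choose an $H_{A_2}$-automorphism permuting each twin class of $C_2$ so as to align $\psi(N(b_1) \cap C_1)$ with $N(b_2) \cap C_2$; composing $\psi$ with it yields the required annotation-preserving isomorphism $H_{A_1} \cong H_{A_2}$. The main obstacle is precisely the counting step: neither $\varphi$ alone (which forgets $b_j$'s adjacencies) nor the multiset identity alone (which mixes $C$-degrees with $B$-degrees) suffices, and their combined use via the chain structure and the minimality of $b_j$ must be carefully orchestrated to pin down $k_j$ and its twin-class distribution.
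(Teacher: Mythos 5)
Your setup matches the paper's: restrict $\varphi$ to $A_1\cup C_1$, observe that the only possible failure is the annotation on $C$-vertices, reduce this to matching $N_{G_1}(b_1)\cap C_1$ with $N_{G_2}(b_2)\cap C_2$ up to twins, and finish by composing with a twin-permuting automorphism. The problem is that the step you yourself identify as ``the core of the proof'' --- the counting statement --- is never actually proved. You describe a \emph{strategy} (``identify a reference vertex $b'_j$ whose $G_j$-neighborhood in $C_j$ is controlled; transporting this reference via $\varphi$ \dots yields $k_1=k_2$'') and then concede that the pieces ``must be carefully orchestrated,'' but no orchestration is given. As written, there is no argument that $k_1=k_2$, let alone that the $C$-adjacencies of $b_1$ and $b_2$ distribute identically over twin classes.

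The missing idea is the one the paper hinges on. Define $X_j\subseteq A_j\cup C_j$ to be the set of vertices adjacent to \emph{all} of $B_j\setminus\{b_j\}$. This set is definable inside the card $G_j-b_j$, so $\varphi(X_1)=X_2$; the hypothesis $|B_j|\ge 2$ forces $X_j\subseteq C_j$; and the minimality of $N_{G_j}(b_j)\cap C_j$ (via the chain structure of the neighborhoods) forces every $C_j$-neighbor of $b_j$ to be adjacent to all of $B_j$, hence $N_{G_j}(b_j)\cap C_j\subseteq X_j$. This is what lets you transport the invisible information ``$c$ is adjacent to $b_1$'' through $\varphi$, and it also normalizes degrees: every $u\in N_{G_j}(b_j)\cap C_j$ satisfies $d_{G_j}(u)=d_{G_j-b_j}(u)+1$. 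With that in hand, the paper avoids your global count entirely: given a single $c\in C_1$ whose annotation is violated, the degree-multiset hypothesis produces a partner $c'\in X_1$ of equal $(G_1-b_1)$-degree violated in the opposite direction; equal degree plus membership in $X_1$ plus linearly ordered neighborhoods makes $c$ and $c'$ twins in $H_{A_1}$ (and their images twins in $H_{A_2}$), so swapping them strictly reduces the number of violations, and one inducts. A related soft spot in your write-up: you define the twin classes $T$ by equality of $N(c)\cap A$, but the degree-multiset hypothesis speaks about $G$-degrees, which mix $|N(c)\cap A|$ with $|N(c)\cap B|$; reconciling the two is exactly what the containment $N(b_j)\cap C_j\subseteq X_j\subseteq C_j$ accomplishes, and without it your per-class count $|N(b_1)\cap T|=|N(b_2)\cap\psi(T)|$ does not follow from the stated hypotheses.
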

\begin{proof}
Let~$\varphi$ be an isomorphism from~$G_1-b_1$ to~$G_2-b_2$
with~$\varphi(A_1)= A_2$ and~$\varphi(C_1)= C_2$.
If we ignore the annotations, then~$\varphi|_{A_1\cup C_1}\colon H_{A_1} \to H_{A_2}$ is an isomorphism.
If~$\varphi|_{A_1\cup C_1}$ is not an isomorphism of the annotated graphs $H_{A_1}$ and~$H_{A_2}$, then there is a vertex~$c\in C_1$ such that
the annotations of~$c$ and~$\varphi|_{A_1\cup C_1}(c)$ disagree.
This implies that~$c\in N_{G_1}(b_1)$ and~$\varphi|_{A_1\cup C_1}(c)\notin N_{G_2}(b_2)$ or that~$c\notin N_{G_1}(b_1)$ and~$\varphi|_{A_1\cup C_1}(c) \in N_{G_2}(b_2)$. By symmetry, we can assume the former.

Note that~$\deg_{G_1-b_1}(c)=\deg_{G_2-b_2}(\varphi|_{A_1\cup C_1}(c))$.

For~$i\in \{1,2\}$, let~$X_i\subseteq A_i\cup C_i$ be the set of vertices~$x$ for which~$B_i\setminus \{b_i\}\subseteq N_{G_i}(x)$. Since~$\varphi$ is an isomorphism,~$\varphi(X_1)=X_2$. Furthermore, for~$i\in \{1,2\}$, since~$b_i$ is chosen so that $N_{G_i}(b_i) \cap C_i$ is minimal, every vertex of~$C_i$ adjacent to~$b_i$ is adjacent to all vertices of~$B_i$, so~$N_{G_i}(b_i) \subseteq X_i$. 
Moreover, since~$|B_i|\geq 2$, we know that~$X_i\subseteq C_i$.

In particular, we have~$c\in X_1$ and~$\varphi|_{A_1\cup C_1}(c)\in X_2$.

Since~$\leftmset d_{G_1}(u)\colon u\in N_{G_1}(b_1)\rightmset=\leftmset d_{G_2}(u)\colon u\in N_{G_2}(b_2)\rightmset$, there is a vertex~$c'\in X_1$ with~$B_1 \setminus \{b_1\}\subseteq N_{G_1}(c')$ and with~$\deg_{G_1-b_1}(c')=\deg_{G_1-b_1}(c)$ and such that~$c'\notin N_{G_1}(b_1)$ and~$\varphi|_{A_1\cup C_1}(c')\in N_{G_2}(b_2)$.
We alter~$\varphi|_{A_1\cup C_1}$ by mapping~$c$ to~$\varphi|_{A_1\cup C_1}(c')$ and~$c'$ to~$\varphi|_{A_1\cup C_1}(c)$.
Since $(A_1, C_1, B_1)$ and $(A_2, C_2, B_2)$ have linearly ordered neighborhoods by Lemma~\ref{lem: clean implies linearly ordered nbhds}, the vertices~$x\in C_1$ and~$c'\in C_1$ are twins in~$G_1-b_1$ and the vertices~$\varphi|_{A_1\cup C_1}(x)\in C_2$ and~$\varphi|_{A_1\cup C_1}(c')\in C_2$ are twins in~$G_2-b_2$.

This decreases the number of vertices of which the isomorphism~$\varphi|_{A_1\cup C_1}$ does not respect  the annotation. By induction on this number there is an annotation-respecting isomorphism from~$H_{A_1}$ to~$H_{A_2}$.
\end{proof}

\section{Reconstruction of the flank sizes}\label{sec:flank:sizes}

In this section, we argue that the sizes of the flanks are reconstructible.

\begin{lemma}\label{lem:no:flanks:can:be:decided}
The property whether $G$ is an interval graph with~$F_G = \emptyset$ is recognizable.
\end{lemma}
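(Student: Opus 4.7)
The plan is to characterize $F_G = \emptyset$ via reconstructible invariants. Since the class of interval graphs is recognizable (by the cited Von Rimscha theorem), I may assume throughout that $G$ is an interval graph. By Lemma~\ref{lem: reconstructible-graph-classes}~(i), whether $G$ is disconnected is recognizable, and if $G$ is disconnected then $F_G = \emptyset$ iff every connected component has empty flank; hence I may reduce to the case where $G$ is connected by treating each component separately.

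The first substantive step is to extract an easily checkable necessary condition on flank vertices. By Lemma~\ref{lem: reconstruct-degree-sequence}, the maximum degree $\Delta(G)$ is reconstructible, and for each card $G-v$ both $d_G(v)$ and the multiset $\{d_G(u) \colon u \in N_G(v)\}$ are strongly reconstructible. Since $\Vmaxdeg(G) \subseteq \bulk(G)$, a flank vertex $v \in F_G$ satisfies $d_G(v) < \Delta(G)$ together with $d_G(u) < \Delta(G)$ for every $u \in N_G(v)$. Letting $X$ denote the reconstructible set of vertices with this degree pattern, we have $F_G \subseteq X$, so if $X = \emptyset$ I can immediately conclude $F_G = \emptyset$.

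The main obstacle is the converse direction in the case $X \neq \emptyset$: a vertex in $X$ might fail to lie in $F_G$ because it could alternatively lie in $\bulk(G) \setminus N_G[\Vmaxdeg(G)]$, that is, be a bulk vertex that happens to have no maximum-degree neighbor (such vertices arise from long induced paths between pairs of maximum-degree vertices). To separate these two possibilities I intend to use Lemma~\ref{lem: connected-vmaxdeg}~\eqref{itm:minus-flank-vertex}: removing a flank vertex leaves $N_G[\bulk(G)]$ unchanged, whereas removing an interior-bulk vertex can disrupt the coating $\nucl$ by breaking induced paths between maximum-degree vertices, strictly shrinking $N[\bulk]$. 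Since $|N_{G-v}[\bulk(G-v)]|$ is a graph invariant readable from each card, the multiset of these values across the deck is reconstructible. Combining this information with the structural results of Section~\ref{sec:structure:theory}---in particular Lemma~\ref{lem:side:versus:flank}, which intrinsically characterises flanks via maximum-degree vertices and their sides---should let me distinguish genuine flank-vertex deletions from interior-bulk deletions, thereby deciding whether $F_G = \emptyset$. The delicate point, which I expect to be the hardest, is ruling out coincidences where the relevant invariants from interior-bulk deletions happen to match those expected from flank-vertex deletions; here the rigidity of interval graphs established in Section~\ref{sec:structure:theory} is essential.
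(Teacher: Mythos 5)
Your setup is sound and, up to a point, coincides with the paper's: your set $X$ of vertices of non-maximum degree with no maximum-degree neighbor is exactly $\NTwo{G}{\Vmaxdeg(G)}$, it contains $F_G$, it is identifiable from the deck via Lemma~\ref{lem: reconstruct-degree-sequence}, and $X=\emptyset$ does force $F_G=\emptyset$. But the converse direction, which you correctly flag as the real content, is not proved in your proposal; it is only announced. Saying that combining Lemma~\ref{lem: connected-vmaxdeg}~\eqref{itm:minus-flank-vertex} with ``the structural results of Section~\ref{sec:structure:theory}'' \emph{should} distinguish flank-vertex deletions from interior-bulk deletions, while conceding that ruling out coincidences is ``the hardest'' part, leaves the entire difficulty unresolved. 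Worse, the specific invariant you propose, the multiset of values $\lvert N_{G-v}[\bulk(G-v)]\rvert$ over the deck, does not by itself decide the question: if $F_G\neq\emptyset$ the maximum of these values over cards with preserved maximum degree equals $\lvert N_G[\bulk(G)]\rvert\le n-1$, but if $F_G=\emptyset$ every such card also yields a value at most $n-1$, so the two situations are not separated by this statistic. Indeed the paper's Lemma~\ref{lem: E-recognizable} can only interpret exactly these quantities \emph{after} invoking Lemma~\ref{lem:no:flanks:can:be:decided}, so your route is circular as stated: $\lvert N_G[\bulk(G)]\rvert$ is not known to be reconstructible until one already knows whether $F_G=\emptyset$.

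The paper closes this gap with a concrete combinatorial criterion that your proposal lacks. It first reconstructs $G[\Vmaxdeg(G)]$ from the cards of vertices in your set $X$ and observes that if this graph is connected then $F_G\neq\emptyset$ by Lemma~\ref{lem: connected-vmaxdeg}. In the remaining case it proves that $F_G=\emptyset$ if and only if no card with preserved maximum degree has the property that every maximum-degree vertex has two non-empty sides; the nontrivial direction requires exhibiting a carefully chosen vertex (the second neighbor $x_2$ of a bordering vertex $u_2$ in the inclusion order of neighborhoods restricted to the side $L$ containing the other maximum-degree vertices) whose deletion witnesses the presence of a flank. Some device of this kind, an explicit, checkable property of cards whose presence or absence is \emph{proved} equivalent to $F_G\neq\emptyset$, is what your argument is missing.
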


\begin{proof}
Von Rimscha~\cite{DBLP:journals/dm/Rimscha83}
proved that being an interval graph is recognizable and disconnected graphs are reconstructible~\cite{Kelly57}.
Hence, we can assume that~$G$ is a connected interval graph.
We can reconstruct whether $N_G[\Vmaxdeg(G)] = V(G)$ by Lemma~\ref{lem: reconstruct-degree-sequence}. 
In this case~$N_{G}[\bulk(G)] = V(G)$ and, hence,~$|F_G| = 0$.
We can thus assume that $\NTwo{G}{\Vmaxdeg(G)} \neq \emptyset$.
Set
\[\mathcal{J} \coloneqq \leftmset G_v \colon v \in \NTwo{G}{\Vmaxdeg(G)} \rightmset.\]

Note that~$\mathcal{J}$ is non-empty and reconstructible.\footnote{The slightly informal statement ``$\mathcal{J}$ is reconstructible'' would be recast by our formal definition given in Section~\ref{sec:prelims} to say that the map that associates with every graph~$G$ the multiset~$\mathcal{J}(G) \coloneqq \leftmset G_v \colon v \in \NTwo{G}{\Vmaxdeg(G)} \rightmset$ of unlabeled graphs is reconstructible, meaning it satisfies that for pairs of graphs~$G$ and~$G'$ with the same deck we have~$\mathcal{J}(G)=\mathcal{J}(G')$. We will continue to use informal statements from here onwards.} 
Since~$G[\Vmaxdeg(G)] \cong G_v[\Vmaxdeg(G)]$ for every~$G_v\in \mathcal{J}$, the graph~$G[\Vmaxdeg(G)]$ is reconstructible (up to isomorphism).
If~$G[\Vmaxdeg(G)]$ is connected, then~$F_G \neq \emptyset$ by Lemma~\ref{lem: connected-vmaxdeg}.
We can thus assume that~$G[\Vmaxdeg(G)]$ is not connected.
Set
\[\mathcal{K} \coloneqq \leftmset G_v \colon\maxdeg(G_v) = \maxdeg(G) \rightmset.\]
Since we assume that~$G[\Vmaxdeg(G)]$ is not connected, every vertex $v \in \Vmaxdeg(G)$ satisfies $G_v \in \mathcal{K}$. In particular, $\mathcal{K} \neq \emptyset$.

\begin{claim} \label{claim:K}
	$F_G = \emptyset$ if and only if there is no card in~$\mathcal{K}$ in which
	every vertex of maximum degree has two non-empty sides.
\end{claim}

\noindent $\ulcorner$ 
($\Rightarrow$) 
Assume that~$F_G= \emptyset$.
Let~$u_1$ and~$u_2$ be two bordering vertices of maximum degree that are not twins. Note that~$u_1$ and~$u_2$ exist and are not adjacent since~$\mathcal{K}\neq  \emptyset$. Also note that $u_1$ and~$u_2$ both have only one non-empty side by Lemma~\ref{lem: bordering-means-flank-is-side}.

A vertex~$v$ for which~$G_v\in \mathcal{K}$ is not adjacent to both~$u_1$ and~$u_2$, since otherwise it would be adjacent to all maximum degree vertices. However, if~$v$ is not adjacent to~$u_i$, then in~$G_v$ the vertex~$u_i$ only has one non-empty side.

\noindent
($\Leftarrow$) Assume that~$F_G \neq \emptyset$.
In particular, there exists at least one flank $F_1 \neq \emptyset$ of $G$.
By Lemma~\ref{lem: bordering-means-flank-is-side}, there exists a bordering vertex $u_1$ in $\Vmaxdeg(G)$ such that $F_1$ is a side of $u_1$.

Since $\mathcal{K} \neq \emptyset$ there exists a bordering vertex~$u_2$ which is not a twin of~$u_1$ and not adjacent to~$u_1$.
In particular, $u_2$ is contained in a side $S_2$ of $u_1$ and $S_2 \neq F_1$ since vertices in the flank are not of maximum degree.

If~$u_2$ has a twin, then in $G_{u_2}$ every vertex of maximum degree has two sides (one containing~$F_1$ and the other one containing a twin of $u_2$). 
So we can assume that~$u_2$ does not have a twin.

Since graphs of maximum degree at most~2 are reconstructible, we can assume~$\deg_G(u_2) \geq 3$.
By Lemma~\ref{lem:side:versus:flank}, in $G$, exactly one side of~$u_2$ contains other vertices of maximum degree. We denote this side by~$L$.
Let~$x_1,x_2,\ldots,x_t$ be an ordering of the vertices in~$N(u_2)$ so that~$N(x_i) \cap L \subseteq N(x_{i+1}) \cap L$ (such an ordering exists since $G$ is an interval graph).

We prove that~$G_{x_2}$ is in $\mathcal{K}$: If~$x_2 \in \Vmaxdeg(G)$, then~$G_{x_2}$ is in~$\mathcal{K}$ since $\Vmaxdeg(G)$ is not connected. If~$x_2 \in V(G)\setminus \Vmaxdeg(G)$, then~$N_G(x_2) \cap L = \emptyset$, because otherwise all neighbors of~$u_2$ except possibly~$x_1$ have a neighbor in~$L$ and are adjacent to~$u_2$, which in particular means they are all  adjacent and have maximum degree (including~$x_2$). Thus also in this case $G_{x_2}$ is in~$\mathcal{K}$.

It remains to prove that every vertex in $\Vmaxdeg(G_{x_2})$ has two sides in $G_{x_2}$.
Note that~$x_1$ has no neighbors in~$L$ since otherwise every neighbor of~$u_2$ has a neighbor in~$L$, making all neighbors of~$u_2$ pairwise adjacent, which contradicts~$u_2 \in \Vmaxdeg(G)$.
For every~$s \in \Vmaxdeg(G_{x_2})$ the vertex $x_1$ and the flank $F_1$ lie in different sides of~$s$.  \hfill~$\lrcorner$

Since $\mathcal{K}$ is reconstructible by Lemma~\ref{lem: reconstruct-degree-sequence}, the statement follows by Claim~\ref{claim:K}.
\end{proof}

Set
\[\mathcal{E} \coloneqq \leftmset G_v \in \mathcal{D}(G) \colon G_v~\text{is connected},  \maxdeg(G_v) = \maxdeg(G), \lvert N_{G_v}[\bulk(G_v)]\rvert= \lvert N_G[\bulk(G)]\rvert   \rightmset.\]

\begin{lemma}\label{lem: E-recognizable}
The multiset of cards~$\mathcal{E}$ is reconstructible and $\{v \in V(G)\colon G_v \in \mathcal{E}\}\subseteq F_G$.
\end{lemma}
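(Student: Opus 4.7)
The plan is to reconstruct the two global quantities appearing in the definition of $\mathcal{E}$, namely $\maxdeg(G)$ and $|N_G[\bulk(G)]|$; once both are available, membership of any card $G_v$ in $\mathcal{E}$ can be decided from data intrinsic to $G_v$ (connectedness, $\maxdeg(G_v)$, and $|N_{G_v}[\bulk(G_v)]|$). Since $\maxdeg(G)$ is reconstructible by Lemma~\ref{lem: reconstruct-degree-sequence}, the main task is to reconstruct $|N_G[\bulk(G)]|$.

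My first step is to decide whether $F_G = \emptyset$ via Lemma~\ref{lem:no:flanks:can:be:decided}. In the affirmative case $N_G[\bulk(G)] = V(G)$ and the quantity is simply $|V(G)|$. If $F_G \neq \emptyset$, I would establish the formula
\[|N_G[\bulk(G)]| = \max\bigl\{|N_{G_v}[\bulk(G_v)]| : G_v \in \mathcal{D}(G),\ G_v\text{ connected},\ \maxdeg(G_v)=\maxdeg(G)\bigr\}.\]
The upper bound follows immediately from Lemma~\ref{lem: connected-vmaxdeg}(ii) applied to $H = G-v$, which yields $N_{G-v}[\bulk(G-v)] \subseteq N_G[\bulk(G)]$ whenever $\maxdeg(G-v) = \maxdeg(G)$. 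For the matching lower bound I would exhibit one card attaining equality. Fix a tidy interval representation (Lemma~\ref{existence:tidy}) and, by symmetry, assume the right flank $F_2$ of Lemma~\ref{lem: bordering-means-flank-is-side} is nonempty; let $v^\star$ be any vertex of $V(G)$ maximizing $\ell_{v^\star}$. Then $v^\star \in F_2 \subseteq F_G$, and every neighbor $u$ of $v^\star$ satisfies $\ell_u \le \ell_{v^\star} \le r_u$, so $N_G(v^\star)$ is a clique. Thus $v^\star$ is simplicial and therefore a non-cut vertex, so $G-v^\star$ is connected. Since $v^\star \in F_G$ has no neighbor in $\Vmaxdeg(G) \subseteq \bulk(G)$, we also have $\maxdeg(G-v^\star) = \maxdeg(G)$. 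Lemma~\ref{lem: connected-vmaxdeg}(iii) then gives $|N_{G-v^\star}[\bulk(G-v^\star)]| = |N_G[\bulk(G)]|$, so $G_{v^\star}$ is the required witness.

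For the inclusion $\{v : G_v \in \mathcal{E}\} \subseteq F_G$, I would argue by contrapositive: if $v \in N_G[\bulk(G)]$, then Lemma~\ref{lem: connected-vmaxdeg}(ii) still gives $N_{G_v}[\bulk(G_v)] \subseteq N_G[\bulk(G)]$, but the left-hand side lies in $V(G_v) = V(G) \setminus \{v\}$ and hence omits $v$; consequently $|N_{G_v}[\bulk(G_v)]| \le |N_G[\bulk(G)]| - 1$, which is incompatible with $G_v \in \mathcal{E}$. The main obstacle in the plan is the lower bound in the formula for $|N_G[\bulk(G)]|$ when $F_G \neq \emptyset$: a priori nothing forces $\mathcal{E}$ to be nonempty, and producing the simplicial witness $v^\star$ together with Lemma~\ref{lem: connected-vmaxdeg}(iii) is the decisive structural step. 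Everything else amounts to repeated invocations of Lemma~\ref{lem: connected-vmaxdeg}(ii) and bookkeeping with the reconstructible quantities.
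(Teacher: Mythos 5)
Your proposal is correct and follows essentially the same route as the paper: reconstruct $|N_G[\bulk(G)]|$ as the maximum of $|N_{G_v}[\bulk(G_v)]|$ over cards of full maximum degree, using Lemma~\ref{lem: connected-vmaxdeg} for both the upper bound and the strict inequality that yields $\{v : G_v \in \mathcal{E}\} \subseteq F_G$, with Lemma~\ref{lem:no:flanks:can:be:decided} handling the case $F_G=\emptyset$. The only (harmless) difference is that you restrict the maximum to connected cards and therefore construct an explicit simplicial witness $v^\star$ in the flank, whereas the paper takes the maximum over all cards with $\maxdeg(G_v)=\maxdeg(G)$ and invokes Lemma~\ref{lem: connected-vmaxdeg}(iii) for an arbitrary flank vertex.
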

\begin{proof}
Clearly, we can identify the connected cards.
Since the degree sequence is reconstructible we can identify the cards $G_v$ satisfying $\Vmaxdeg(G) = \Vmaxdeg(G_v)$. 

By Lemma~\ref{lem: connected-vmaxdeg}, for a vertex~$v\in F_G$ we have $|N[\bulk(G-v)]|= |N[\bulk(G)]|$ but for a vertex~$v\in V(G)\setminus F_G$ which satisfies $\maxdeg(G_v) = \maxdeg(G)$, we have $|N[\bulk(G-v)]|< |N[\bulk(G)]|$.
Thus $\{v \in V(G)\colon G_v \in \mathcal{E} \} \subseteq F_G$.
In particular, if $F_G = \emptyset$, then $\mathcal{E} = \emptyset$.
If otherwise~$|F_G|>0$, then we can reconstruct~$|N[\bulk(G)]|$ from the deck: it is the maximum~$|N[\bulk(G-v)]|$ among all~$v$ for which the card~$G_v$ satisfies $\maxdeg(G_v) = \maxdeg(G)$.
Since the property~$|F_G|=0$ is reconstructible (Lemma~\ref{lem:no:flanks:can:be:decided}),
we conclude that~$\mathcal{E}$ is reconstructible.
\end{proof}

We now argue that from the set~$\mathcal{E}$ we can recover information about the flanks.

For $i \in \{1,2\}$, we set $\neg i \coloneqq 3-i$.

\begin{lemma}\label{lem:each:flank:represented:in:E}
If~$|F_G|>0$ and ~$F_1$ and~$F_2$ are the two flanks of~$G$ (with one of them possibly the empty set), then for each $i \in \{1,2\}$ the following statements hold. 

\begin{enumerate}
\item \label{itm:flank-sizes} For each~$G_v\in \mathcal{E}$, if~$v\in F_i$, then the flanks of~$G-v$ are~$F_i\setminus\{v\}$ and~$F_{\neg i}$.
\item \label{itm: fi-not-zero-card-in-e-in-flank} If~$|F_i|>0$, then there is a card~$G_v\in \mathcal{E}$ with~$v\in F_i$.
\end{enumerate}
\end{lemma}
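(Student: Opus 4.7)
The plan is to prove both parts by showing that deleting a flank vertex leaves the bulk (and hence the partition into flanks) essentially unchanged up to that vertex. The main tool is Lemma~\ref{lem: connected-vmaxdeg}\eqref{itm:minus-flank-vertex}.

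For Part~\ref{itm:flank-sizes}, suppose $v\in F_i$ and $G_v\in\mathcal{E}$. Since $v\in F_G$, Lemma~\ref{lem: connected-vmaxdeg}\eqref{itm:minus-flank-vertex} gives $N_{G-v}[\bulk(G-v)]=N_G[\bulk(G)]$, so $F_{G-v}=F_G\setminus\{v\}$ as a set. I would then argue that in fact $\bulk(G-v)=\bulk(G)$: first, no vertex of $\Vmaxdeg(G)$ is adjacent to $v$ (since $v\notin N_G[\bulk(G)]\supseteq\Vmaxdeg(G)$), so $\Vmaxdeg(G-v)=\Vmaxdeg(G)$; second, $v$ lies on no induced path between two max-degree vertices (otherwise $v\in\nucl_G(\Vmaxdeg(G))\subseteq\bulk(G)$), hence $\nucl_{G-v}(\Vmaxdeg(G-v))=\nucl_G(\Vmaxdeg(G))$; and third, since $v\notin N_G[\nucl_G(\Vmaxdeg(G))]$, the defining neighborhood condition of the span is unaffected by removing $v$. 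Now I apply Lemma~\ref{lem: flanks}\eqref{itm: intuitive-sides} to $G-v$ with $S=\bulk(G-v)$, which is connected by Lemma~\ref{lem:nucleus:and:span}\eqref{itm: span-is-connected} and contains max-degree vertices: the two flanks of $G-v$ are exactly $L_{G-v}(\bulk(G-v))$ and $R_{G-v}(\bulk(G-v))$. Using any tidy interval representation of $G$ (which restricts to one of $G-v$) together with $\bulk(G-v)=\bulk(G)$, these two sets coincide with $L_G(\bulk(G))\setminus\{v\}$ and $R_G(\bulk(G))\setminus\{v\}$. Since $\{L_G(\bulk(G)),R_G(\bulk(G))\}=\{F_1,F_2\}$ by Lemma~\ref{lem: flanks}\eqref{itm: intuitive-sides} applied to $G$, and $v$ belongs to exactly one of the flanks (namely $F_i$), the flanks of $G-v$ are precisely $F_i\setminus\{v\}$ and $F_{\neg i}$.

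For Part~\ref{itm: fi-not-zero-card-in-e-in-flank}, fix a tidy interval representation of $G$ and, without loss of generality, suppose that $F_i$ consists of the vertices $u$ with $r_u<\min_{s\in\Vmaxdeg(G)}\ell_s$ (the other case is symmetric). Choose $v\in F_i$ minimizing $r_v$. Every vertex $u\in V(G)\setminus F_i$ satisfies $r_u\geq\min_{s\in\Vmaxdeg(G)}\ell_s>r_v$, so $v$ in fact minimizes $r_v$ over all of $V(G)$. Consequently any two neighbors $u,u'$ of $v$ satisfy $\ell_u,\ell_{u'}\leq r_v\leq r_u,r_{u'}$, so their intervals contain the point $r_v$ and they are adjacent; thus $v$ is simplicial in $G$, which implies $G-v$ is connected. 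The remaining two conditions for $G_v\in\mathcal{E}$ follow from $v\in F_G$: no max-degree vertex is adjacent to $v$, so $\maxdeg(G-v)=\maxdeg(G)$, and Lemma~\ref{lem: connected-vmaxdeg}\eqref{itm:minus-flank-vertex} gives $|N_{G-v}[\bulk(G-v)]|=|N_G[\bulk(G)]|$.

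The main technical obstacle is in Part~\ref{itm:flank-sizes}, namely establishing $\bulk(G-v)=\bulk(G)$ so that the equivalence classes of $\approx_{G-v,\bulk(G-v)}$ really correspond, side by side, to $F_i\setminus\{v\}$ and $F_{\neg i}$. Once this identity is in hand, everything else follows routinely from the structural lemmas already developed.
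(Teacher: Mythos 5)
Part~2 of your argument is correct, and in fact cleaner than the paper's (which takes two simplicial vertices of $G[F_i]$ and argues one of the two cards is connected); your extremal choice of $v$ yields a vertex that is simplicial in all of $G$, which settles connectivity directly.

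Part~1, however, contains a genuine gap, and it sits exactly at the step you flag as the main technical obstacle: the identity $\bulk(G-v)=\bulk(G)$ is \emph{false} in general. Your third step claims that since $v\notin N_G[\nucl_G(\Vmaxdeg(G))]$ the defining condition of the span is unaffected by deleting $v$, but the condition $N_G[u]\subseteq N_G[\nucl_G(\Vmaxdeg(G))]$ can flip from false to true for a vertex $u\in N_G(\bulk(G))$ whose only neighbor outside $N_G[\bulk(G)]$ is $v$ itself: after deleting $v$ such a $u$ is absorbed into $\Span_{G-v}(\Vmaxdeg(G-v))$. A concrete instance is the interval graph with intervals $v=[0,0]$, $u=[0,1]$, $b_1=[1,3]$, $b_2=b_3=[2,3]$, $b_4=[2,4]$, $c=[4,5]$: here $F_G=\{v\}$, $G_v\in\mathcal{E}$, yet $u\in\bulk(G-v)\setminus\bulk(G)$. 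The paper is aware of this phenomenon --- in Case~1 of Lemma~\ref{lem:two:flanks:reconstruc} it needs both $|F_1|>1$ and a careful choice of $v$ minimizing $|N_G[v]\cap R_1|$ precisely in order to conclude $\bulk(G-v)=\bulk(G)$. Fortunately your conclusion does not need the identity: the flank vertex set is $V(G-v)\setminus N_{G-v}[\bulk(G-v)]=F_G\setminus\{v\}$ by Lemma~\ref{lem: connected-vmaxdeg}~(iii) alone, and since $\bulk(G-v)\supseteq\bulk(G)\setminus\{v\}$ one still gets $L_{G-v}(\bulk(G-v))\subseteq F_1\setminus\{v\}$ and $R_{G-v}(\bulk(G-v))\subseteq F_2\setminus\{v\}$ in the restricted interval representation; as these two sets partition $F_G\setminus\{v\}$, equality follows. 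This containment-plus-counting argument is essentially the paper's one-line proof (``the flanks of $G-v$ are subsets of the flanks of $G$''), so your Part~1 is repairable by deleting the false intermediate claim rather than trying to prove it.
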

\begin{proof}

For Part 1, if~$G_v\in \mathcal{E}$, then the flanks of~$G-v$ are subsets of the flanks of~$G$, which shows the first property.

For Part 2, if flank~$F_i$ has only one vertex~$x$ then~$G_x$ is connected and thus in~$\mathcal{E}$.
If a flank has at least two vertices then, being an interval graph, the induced subgraph~$G[F_i]$ contains (at least) two simplicial vertices~$x$ and~$y$. Then one of the two cards~$G_{x}$ or~$G_{y}$ is connected and thus in~$\mathcal{E}$. 
\end{proof}

\begin{lemma}\label{lem:sizes}
If~$F_1$ and~$F_2$ are the two (possibly empty) flanks of~$G$, then~$\leftmset |F_1|,|F_2|\rightmset$ is reconstructible.
\end{lemma}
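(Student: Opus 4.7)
The plan is as follows. Using Lemma~\ref{lem:no:flanks:can:be:decided} we first handle the case $F_G=\emptyset$, returning $\leftmset 0,0\rightmset$. From here on assume $|F_G|\ge 1$. The integer $|F_G|=|V(G)|-|N[\bulk(G)]|$ is reconstructible: $|V(G)|$ is trivially reconstructible, and $|N[\bulk(G)]|$ is reconstructed inside the proof of Lemma~\ref{lem: E-recognizable} as the largest value of $|N[\bulk(G_v)]|$ taken over cards $G_v$ with $\maxdeg(G_v)=\maxdeg(G)$. I also reconstruct the multiset $\mathcal{E}$ via Lemma~\ref{lem: E-recognizable}; for every $G_v\in\mathcal{E}$ the card itself is a known interval graph, so I can read off the multiset $\{a_v,b_v\}$ of sizes of its two (possibly empty) flanks.

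By Lemma~\ref{lem:each:flank:represented:in:E}(1), this multiset equals $\{|F_i|-1,|F_{\neg i}|\}$ where $i$ is the index of the flank containing $v$, and by Lemma~\ref{lem:each:flank:represented:in:E}(2) every non-empty flank contributes at least one such card. Let $M\coloneqq\max\{\max(a_v,b_v):G_v\in\mathcal{E}\}$. I plan to argue the following. If both flanks are non-empty then both multisets $\{|F_1|-1,|F_2|\}$ and $\{|F_1|,|F_2|-1\}$ occur among the $\{a_v,b_v\}$, and a short case check shows $M=\max(|F_1|,|F_2|)$; combined with the known total $|F_G|$ this yields $\leftmset |F_1|,|F_2|\rightmset=\leftmset M,|F_G|-M\rightmset$. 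If exactly one flank is non-empty, then every observed pair has a zero coordinate, $M=|F_G|-1$, and the answer is $\leftmset 0,|F_G|\rightmset$. These two situations are told apart by whether some $G_v\in\mathcal{E}$ exhibits $\min(a_v,b_v)\ge 1$, which settles the question whenever such a witness exists, and also in the complementary case as long as $|F_G|\ge 3$ (since two non-empty flanks of total size at least $3$ force some observed pair to have both coordinates positive).

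The main obstacle is the remaining edge case $|F_G|=2$ in which no card in $\mathcal{E}$ has both flank sizes positive, because both $\leftmset 0,2\rightmset$ and $\leftmset 1,1\rightmset$ produce only pairs $\{0,1\}$. To separate these two subcases I plan to exploit the structural difference that in the $\leftmset 0,2\rightmset$ case the two vertices of $F_G$ lie in a common $\approx_{G,\bulk(G)}$-class, forcing them to be adjacent or to share a common neighbour in $N(\bulk(G))\setminus\bulk(G)$, whereas in the $\leftmset 1,1\rightmset$ case their closed neighbourhoods are disjoint. This difference should become visible in the deck via a card $G_v$ with $v\notin F_G$: deleting a vertex that mediates the equivalence in the $\leftmset 0,2\rightmset$ case changes the number of $\approx$-classes outside $N[\bulk]$, while no such change can occur in the $\leftmset 1,1\rightmset$ case. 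Since the relevant cards can be identified using the invariants already reconstructed (the degree sequence, $\mathcal{E}$, and $|N[\bulk(G)]|$), this final case distinction finishes the proof.
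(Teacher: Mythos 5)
Your treatment of the cases $|F_G|\neq 2$ is correct and follows essentially the same route as the paper: reconstruct $\mathcal{E}$ and $|F_1|+|F_2|$, read off the flank sizes of the cards in $\mathcal{E}$, and combine the sum with the maximum observed flank size, using the presence of a card with two non-empty flanks to decide whether both $|F_1|$ and $|F_2|$ are positive. The one genuine gap is in the edge case $|F_1|+|F_2|=2$, where $\leftmset 0,2\rightmset$ and $\leftmset 1,1\rightmset$ must be separated, and here your proposed test does not work as stated. First, if the two vertices $m,m'$ of $F_G$ are adjacent (a subcase of $\leftmset 0,2\rightmset$), there is no ``mediating vertex'' whose deletion could destroy the equivalence, so your test produces no signal; and if they have several common neighbours, deleting one of them does not change the number of $\approx$-classes either. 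Second, and more seriously, your test is run on cards $G_v$ with $v\notin F_G$, but for such $v$ the bulk is \emph{not} preserved: as noted in the proof of Lemma~\ref{lem: E-recognizable}, any $v\notin F_G$ with $\maxdeg(G_v)=\maxdeg(G)$ satisfies $|N[\bulk(G-v)]|<|N[\bulk(G)]|$ (and $\maxdeg$ itself may drop). Consequently ``the number of $\approx$-classes outside $N[\bulk]$'' computed inside such a card is taken relative to a different, smaller bulk and bears no controlled relation to whether $m\approx_{G,\bulk(G)}m'$ holds in $G$; in particular it can also change in the $\leftmset 1,1\rightmset$ case, so the test is not even one-sided. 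The guarantee $N_{G-v}[\bulk(G-v)]=N_G[\bulk(G)]$ of Lemma~\ref{lem: connected-vmaxdeg} is available only for $v\in F_G$, which is exactly why the paper resolves this case using only the two cards $G_m,G_{m'}\in\mathcal{E}$: it first detects adjacency of $m$ and $m'$ from the degree of the unique flank vertex in $G_{m'}$ (degree $\deg(m)-1$ if adjacent, and no such flank vertex otherwise), and then, in the non-adjacent case, detects a common neighbour by comparing the degree sequence of $N(m')$ with the degree sequence of the neighbourhood of the flank vertex seen in the cards, exploiting that the candidate common neighbours form a clique with $N(m)\subseteq N(m')$. You would need to replace your $|F_G|=2$ argument by something of this kind, or otherwise restrict attention to cards in which the bulk is provably preserved.
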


\begin{proof}
By Lemma~\ref{lem:no:flanks:can:be:decided}, we can assume that~$\leftmset |F_1|,|F_2|\rightmset \neq \leftmset 0,0\rightmset$ and, hence that $\mathcal{E} \neq \emptyset$ by Lemma~\ref{lem:each:flank:represented:in:E} Part~\eqref{itm: fi-not-zero-card-in-e-in-flank}.
Since $\mathcal{E}$ is reconstructible (Lemma~\ref{lem: E-recognizable}), we obtain from Lemma~\ref{lem:each:flank:represented:in:E} Part~\eqref{itm:flank-sizes} that~$|F_1|+|F_2|$ is reconstructible. 
We distinguish cases according to the value of $|F_1|+|F_2|$.

\medskip
\noindent
(Case $|F_1|+|F_2|\geq 3$) In this case, by the previous lemma, there is a card in~$\mathcal{E}$ for which~$G_v$ has two flanks if and only if~$|F_1| > 0$ and~$|F_2|> 0$. Thus we can determine whether~$|F_1| > 0$ and~$|F_2|>0$ from the deck.
If $|F_1|= 0$ or~$|F_2| = 0$  then~$\leftmset |F_1|,|F_2|\rightmset = \leftmset 0,|F_1|+|F_2|\rightmset$ and the multiset is thus reconstructible.
Otherwise, if~$|F_1|> 0$ and~$|F_2| > 0$ then by the previous lemma~$\max\{|F_1|,|F_2|\}$ is the size of the largest flank in~$G_v$ over all choices of~$G_v\in \mathcal{E}$. Since the sum and the maximum of~$|F_1|$ and~$|F_2|$ are reconstructible we conclude that~$\leftmset |F_1|,|F_2|\rightmset$ is reconstructible.

(Case $|F_1|+|F_2|= 2$) 
In this case, by the previous lemma, there are at most two cards in~$\mathcal{E}$.
Furthermore, if there is only one card in~$\mathcal{E}$ then we are in the case~$\leftmset |F_1|,|F_2|\rightmset = \leftmset 0,2\rightmset$.
We can thus assume otherwise.  Then~$\mathcal{E}$ is exactly the set~$\{G_{m},G_{m'}\}$ where~$\{m,m'\} = F_1\cup F_2$. This implies that we can reconstruct~$\leftmset \deg(m),\deg(m')\rightmset$ from the deck.
Assume w.l.o.g. that~$\deg(m)\leq \deg(m')$. Then~$G_{m'}$ has exactly one flank, and this flank contains exactly one vertex, namely~$m$. If~$m$ and~$m'$ are adjacent in~$G$, this vertex has degree~$\deg(m)-1$. 
However, if~$m$ and~$m'$ are not-adjacent in~$G$ then no card~$G_v\in \mathcal{E}$  has a flank vertex of degree~$\deg(m)-1$.
We can thus determine whether~$m$ and~$m'$ are adjacent in~$G$. 

If~$m$ and~$m'$ are adjacent then~$\leftmset |F_1|,|F_2|\rightmset = \leftmset 0,2\rightmset$ and we can thus further assume that~$m$ and~$m'$ are not adjacent.
Under this assumption,~$m$ and~$m'$ are in the same flank of~$G$  exactly if~$m$ and~$m'$ have a common neighbor.
If they do, all vertices in~$N(m)$ and all vertices in~$ N(m')$ are adjacent. Moreover~$N(m)\subseteq N(m')$.

We can determine the degree sequence~$s$ of the neighborhood of~$m'$. Indeed, if~$N(m)= N(m')$ then the neighborhoods of~$m$ and~$m'$ have the same degree sequence, otherwise~$m'$ and~$m$ do not have the same degree and we can identify the card~$G_{m'}$ and thereby determine the degree sequence of the neighborhood of~$m'$.

It follows now that the vertices~$m$ and~$m'$ have a common neighbor exactly if there is no card in~$\{G_{m'},G_m\}$ in which the unique flank vertex has a neighborhood with a degree sequence equal to~$s$.

Overall, we can determine whether~$m$ and~$m'$ belong to the same flank and thus whether $\leftmset |F_1|,|F_2|\rightmset$ is equal to~$ \leftmset 0,2\rightmset$ or to~$\leftmset 1,1\rightmset$.

(Case $|F_1|+|F_2|= 1$) In this case, we have~$\leftmset |F_1|,|F_2|\rightmset = \{0,1\}$.
 
(Case $|F_1|+|F_2|= 0$)  This case is equivalent to $\leftmset |F_1|,|F_2|\rightmset = \leftmset 0,0\rightmset$ which was already discussed at the beginning of the proof.
\end{proof}

In the following subsections, we distinguish three cases according to the number of flanks of the graph.

\section[Two flanks (|F1|>0 and |F2|>0)]{ Two flanks ($|F_1|>0$ and~$|F_2|>0$)}
\label{sec: two-flanks}

We assume in this section that~$G$ is a connected interval graph with two non-empty flanks~$F_1$ and~$F_2$.
For~$i \in \{1,2\}$, we 
 define~$R_i\coloneqq N_G[\bulk(G)]\cap N_G[F_i]$ and $B_i \coloneqq V(G)\setminus (F_i \cup R_i)$.
 Note that~$\partition_G(F_i) = (F_i,R_i,B_i)$.
\begin{lemma}\label{lem:flanks:create:clean:clique:sep}
The sets~$R_1$ and~$R_2$ are disjoint and each of them is a clique in~$G$. Furthermore, for $i \in \{1,2\}$ no vertex in~$R_i$ has a neighbor in~$F_{\neg i}$.
In particular, for $i \in \{1,2\}$ the triple~$\partition_G(F_i)= (F_i,R_i,B_i)$  is a clean clique separation with $F_i\subseteq B_{\neg i}$.
\end{lemma}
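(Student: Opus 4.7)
The plan is to fix a tidy interval representation $\{[\ell_v, r_v]\colon v\in V(G)\}$ of $G$ (via Lemma~\ref{existence:tidy}) and set $\ell^*\coloneqq \min_{s \in \Vmaxdeg(G)} \ell_s$, $r^*\coloneqq \max_{s \in \Vmaxdeg(G)} r_s$. By Lemma~\ref{lem:nucleus:and:span}~(iv) one has $\bulk(G) = \{v : [\ell_v, r_v] \subseteq [\ell^*, r^*]\}$, and combining this with Lemma~\ref{lem: flanks} we may label the flanks so that $F_1 = \{u : r_u < \ell^*\}$ and $F_2 = \{w : \ell_w > r^*\}$. The first step is to identify $R_i = N_G(F_i)$: the distinctness of $F_1$ and $F_2$ as equivalence classes of $\approx_{G,\bulk(G)}$ forbids edges between them, so in the partition $V(G) = F_1 \sqcup F_2 \sqcup N_G[\bulk(G)]$ we obtain $N_G(F_i) \subseteq N_G[\bulk(G)]$, and since $F_i$ is itself disjoint from $N_G[\bulk(G)]$ this gives $R_i = N_G[\bulk(G)] \cap N_G[F_i] = N_G(F_i)$. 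For the clique property, every $c \in R_1$ is adjacent to some $f \in F_1$ (yielding $\ell_c \leq r_f < \ell^*$) and has its interval intersecting $[\ell^*, r^*]$ (yielding $r_c \geq \ell^*$), so every $R_1$-interval contains $\ell^*$; symmetrically every $R_2$-interval contains $r^*$, so both $R_1$ and $R_2$ are cliques.

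Next I would prove disjointness $R_1 \cap R_2 = \emptyset$ by a degree argument analogous to Lemma~\ref{lem: flanks}~(ii). A hypothetical $v \in R_1 \cap R_2$ with neighbors $f_1 \in F_1$ and $f_2 \in F_2$ satisfies $\ell_v \leq r_{f_1} < \ell^* \leq r^* < \ell_{f_2} \leq r_v$, so $[\ell^*, r^*] \subseteq [\ell_v, r_v]$. For any $s \in \Vmaxdeg(G)$ this gives $[\ell_s, r_s] \subseteq [\ell_v, r_v]$, so $N_G(s) \setminus \{v\} \subseteq N_G(v)$, while $s, f_1, f_2$ are three further distinct neighbors of $v$ lying outside $N_G(s) \cup \{v\}$ (using $f_i \notin N_G[\bulk(G)]$). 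This yields $\deg_G(v) \geq \maxdeg(G) + 2$, a contradiction. It follows immediately that no vertex of $R_i$ has a neighbor in $F_{\neg i}$, since such a vertex would lie in $N_G(F_{\neg i}) = R_{\neg i}$, contradicting disjointness. The inclusion $F_i \subseteq B_{\neg i}$ is direct: $F_i$ is disjoint from both $F_{\neg i}$ (different equivalence classes) and from $R_{\neg i} \subseteq N_G[\bulk(G)]$ (as $F_i \cap N_G[\bulk(G)] = \emptyset$).

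Finally, the clean clique separation property is established by invoking Lemma~\ref{lem:tilde:sets:give:clean:clique:sep}~(i) with $S = F_i$. The hypothesis $N_G[F_i] \neq V(G)$ is clear because $\bulk(G) \neq \emptyset$ lies outside $N_G[F_i]$, and the identification $F_i = \{v : [\ell_v, r_v] \subseteq [\ell_{F_i}, r_{F_i}]\}$ holds in any tidy representation since any such $v$ has $r_v \leq r_{F_i} < \ell^*$ and hence lies in $F_i$. The step I expect to require the most care is the extremal condition $\min_{s \in F_1} \ell_s = \min_{v \in V(G)} \ell_v$: here I would argue that any $v$ with $\ell_v < \ell_{F_1}$ must lie in $R_1$ (the only possibility for a non-$F_1$ vertex reaching so far left), is therefore adjacent to every vertex of $F_1$, and has no neighbor in $F_2$ by the degree argument of the previous step; hence $N_G[v] \subseteq V(G) \setminus F_2 = N_G[F_1 \cup \bulk(G)]$, and applying the tidy condition with $M = F_1 \cup \bulk(G)$ forces $[\ell_v, r_v] \subseteq [\ell_{F_1}, r^*]$, contradicting $\ell_v < \ell_{F_1}$. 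The symmetric argument handles $i = 2$ via the other extremal condition, completing the proof.
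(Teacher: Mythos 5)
Your proof is correct, and it reaches all four assertions of the lemma, but it diverges from the paper's (three-sentence) argument in two places. For disjointness of $R_1$ and $R_2$ you run a degree-counting argument: a common neighbor $v$ of both flanks would satisfy $[\ell^*,r^*]\subseteq[\ell_v,r_v]$ and hence $\deg_G(v)\geq\maxdeg(G)+2$. The paper instead observes that a vertex adjacent to some $f_1\in F_1$ and some $f_2\in F_2$ would witness $f_1\sim_G f_2$ and thus merge the two flanks into a single $\approx_{G,\bulk(G)}$-class, contradicting the standing assumption of the section --- a one-line, representation-free argument. Your version is sound but essentially replays the proof of Lemma~\ref{lem: flanks}~(ii). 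The clique argument (every $R_1$-interval contains $\ell^*$, every $R_2$-interval contains $r^*$) coincides with the paper's. Where your write-up genuinely adds value is the ``in particular'' clause: the paper leaves the clean-clique-separation claim implicit, whereas you derive it from Lemma~\ref{lem:tilde:sets:give:clean:clique:sep}~(i) and, crucially, verify the extremal hypothesis $\min_{s\in F_1}\ell_s=\min_{v\in V(G)}\ell_v$ by showing that a vertex reaching further left would lie in $R_1$, have $N_G[v]\subseteq N_G[F_1\cup\bulk(G)]$, and hence violate tidiness with $M=F_1\cup\bulk(G)$. That verification is correct and fills a step the paper glosses over. Both routes are valid; the paper's is shorter on disjointness, yours is more complete on the separation property.
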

\begin{proof} The second statement follows from the fact that a vertex with a neighbor in both~$F_1$ and~$F_2$ would imply that $F_1 \cup F_2$ forms one equivalence class with respect to $\approx_{G, \bulk(G)}$ contradicting the assumption of this chapter.
Vertices in $R_i$ all have two common neighbors which are not adjacent, namely one in~$F_i$ and a vertex of maximum degree in~$\bulk(G)$ by Lemma~\ref{lem: paths-in-interval-graphs}.
 It follows that they form a clique.
\end{proof}

For each~$i\in \{1,2\}$ the separation~$\partition_G(F_i)$ gives rise to the two annotated graphs~$H_{F_i}\coloneqq G[F_i\cup R_i]$ and~$H_{B_i} \coloneqq G[B_i\cup R_i]$.
Set
$\mathcal{E}_i \coloneqq \{G_v \in \mathcal{E}\colon v \in F_i \}$.
Observe that $\{\mathcal{E}_1, \mathcal{E}_2\}$ is a partition of $\mathcal{E}$ by Lemma~\ref{lem: E-recognizable}.

\begin{lemma}\label{lem:HFi:reconstr}
The multiset~$\leftmset H_{F_1},H_{F_2}\rightmset$ of annotated induced subgraphs is reconstructible.
\end{lemma}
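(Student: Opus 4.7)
The plan is to use the Distant Vertex Lemma (Lemma~\ref{lemma:farthest:away:and:at:least:two:reconstruct:other:side}) on pairs of matched cards in~$\mathcal{E}$ in order to transfer, across any two graphs~$G,G'$ with the same deck, isomorphisms of the form $H_{F_{\neg i}}(G)\cong H_{F_{\neg i'}}(G')$, and then to assemble the multiset $\leftmset H_{F_1},H_{F_2}\rightmset$ from these matchings. Concretely, for each $G_v\in\mathcal{E}_i$ the vertex $v\in F_i$ sits in $B_{\neg i}$, is non-adjacent to $R_{\neg i}$ by Lemma~\ref{lem:flanks:create:clean:clique:sep}, and yields a connected $G-v$; hence $N_G(v)\cap R_{\neg i}=\emptyset$ attains the minimum required by the DVL for the clean clique separation $(F_{\neg i},R_{\neg i},B_{\neg i})$. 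The remaining DVL hypotheses are easy: $|B_{\neg i}|\geq|F_i|+|\bulk(G)|\geq 2$, and the neighbourhood degree multisets match by the strong reconstructibility in Lemma~\ref{lem: reconstruct-degree-sequence}.

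Since bulk and flanks are graph-intrinsic and $R_{\neg i}=N[\bulk]\cap N[F_{\neg i}]$ is computable inside the card (using $N_{G-v}[\bulk(G-v)]=N_G[\bulk(G)]$ from Lemma~\ref{lem: connected-vmaxdeg} Part~\eqref{itm:minus-flank-vertex}), any isomorphism $\varphi\colon G-v\to G'-v'$ automatically sends $R_{\neg i}(G)$ to $R_{\neg i'}(G')$ once it sends $F_{\neg i}(G)$ to $F_{\neg i'}(G')$. The subtle question is whether $\varphi$ can be chosen to match the \emph{untouched} flank $F_{\neg i}(G)$ (of size $|F_{\neg i}|$) to the untouched flank $F_{\neg i'}(G')$ rather than to the reduced flank $F_{i'}(G')\setminus\{v'\}$ (of size $|F_{i'}|-1$). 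Whenever the two flanks of $G_v$ have distinct sizes the matching is forced by a simple size count; this covers the case $|F_1(G)|=|F_2(G)|$ (the flanks of $G_v$ then have sizes $|F_1|-1$ and $|F_1|$) and the case where $|F_1(G)|$ and $|F_2(G)|$ differ by at least two.

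The delicate case is $|F_i|=|F_{\neg i}|+1$ with $v\in F_i$, where both flanks of $G_v$ have size $|F_{\neg i}|$ and the size argument fails. Here I would switch to the cards in $\mathcal{E}_{\neg i}$ (the smaller flank, non-empty by Lemma~\ref{lem:each:flank:represented:in:E}), for which the flanks of $G_v$ do have distinct sizes $|F_{\neg i}|-1$ and $|F_i|$ and the size-forcing applies; DVL then directly yields $H_{F_i}(G)\cong H_{F_i}(G')$. This already determines $|\mathcal{E}_i|$ (computable from $H_{F_i}$), and hence $|\mathcal{E}_{\neg i}(G)|=|\mathcal{E}_{\neg i}(G')|$ since $|\mathcal{E}|$ is reconstructible by Lemma~\ref{lem: E-recognizable}. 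A further reapplication of DVL to cards in $\mathcal{E}_i$, using the twin-swapping argument in the proof of the DVL to promote any available $\varphi$ into one that respects the desired flank labelling, then yields the remaining isomorphism $H_{F_{\neg i}}(G)\cong H_{F_{\neg i'}}(G')$.

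Combining the matchings obtained from $\mathcal{E}_1$ and $\mathcal{E}_2$ (both non-empty by Lemma~\ref{lem:each:flank:represented:in:E}) gives a bijection between $\leftmset H_{F_1}(G),H_{F_2}(G)\rightmset$ and $\leftmset H_{F_1}(G'),H_{F_2}(G')\rightmset$ as multisets of annotated graphs, which is the reconstructibility statement. I expect the main obstacle to be the delicate case $|F_i|=|F_{\neg i}|+1$: here the canonical flank matching in $\varphi$ is not size-forced and one must bootstrap using the smaller-flank cards and the internal twin-swapping mechanism of the DVL in order to avoid confusing the untouched flank with the reduced one.
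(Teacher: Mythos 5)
Your plan coincides with the paper's in the unambiguous cases: when the two flanks of a card $G_v$ have distinct sizes, the untouched flank is identified by a size count and its annotated graph can be read off. (The paper does this without the Distant Vertex Lemma at all, simply because $N_G[F_{\neg i}]\cap N_G[v]=\emptyset$ for $v\in F_i$, so $H_{F_{\neg i}}$ sits verbatim, with unchanged degrees and hence correct annotations, inside the card; your DVL invocation is correctly set up but is heavier machinery than needed here.) You also correctly isolate the delicate case $|F_i|=|F_{\neg i}|+1$ and correctly begin its resolution by first recovering the larger flank's graph $H_{F_i}$ from the cards in $\mathcal{E}_{\neg i}$, where the size count still applies.

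The gap is in your final step. To recover $H_{F_{\neg i}}$ you return to cards $G_v$ with $v\in F_i$, in which both flanks have size $|F_{\neg i}|$, and you claim that the twin-swapping argument inside the proof of Lemma~\ref{lemma:farthest:away:and:at:least:two:reconstruct:other:side} can promote an arbitrary isomorphism $\varphi\colon G-v\to G'-v'$ into one mapping $F_{\neg i}(G)$ to $F_{\neg i'}(G')$. It cannot: that mechanism only exchanges separator vertices that are twins (courtesy of linearly ordered neighborhoods); it operates entirely inside $C$ and is powerless to repair a $\varphi$ that exchanges the two equal-sized flanks of the card, i.e., maps the untouched flank $F_{\neg i}(G)$ onto the reduced flank $F_{i'}(G')\setminus\{v'\}$. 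In that situation the DVL hypothesis $\varphi(A_1)=A_2$ simply fails, and the "reconstructed" graph could be $H_{F_{i'}}(G')-v'$ rather than $H_{F_{\neg i'}}(G')$, which need not be isomorphic to it. The paper closes exactly this hole by a different device: having already reconstructed $H_{F_i}$, it enumerates from it all graphs $H_{F_i}-w$ that can arise as the reduced flank of a card in $\mathcal{E}_i$ (those $w$ for which $H_{F_i}-w$ stays connected), and uses this list to decide which of the two candidate flank-graphs $H_{F'},H_{F''}$ of such a card is the untouched $H_{F_{\neg i}}$, or to conclude that the two candidates are isomorphic, in which case the choice is immaterial. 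Some disambiguation of this kind, comparing the candidates against the one-vertex-deleted versions of the already-known flank graph, is needed and is missing from your argument; your intermediate observation that $|\mathcal{E}_i|$ and $|\mathcal{E}_{\neg i}|$ are determined does not substitute for it.
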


\begin{proof}
Our overall strategy to determine $\leftmset H_{F_1},H_{F_2} \rightmset$ is to consider the flanks of all cards in~$\mathcal{E}$.
According to Lemma~\ref{lem:each:flank:represented:in:E}, for each~$i\in \{1,2\}$ there is a card~$G_v$ in~$\mathcal{E}$ with~$v\in F_{\neg i}$ such that the flanks of~$G-v$ are~$F_i$ and~$F_{\neg i}\setminus \{v\}$.

If $|F_i|\neq |F_{\neg i}| -1$, then we can determine which flank $F'$ of~$G_v$ corresponds to~$F_i$ and conclude~$H_{F_i} \cong G_v[N_{G_v}[F']]$. The graphs $H_{F_i}$ and $G_v[N_{G_v}[F']]$ are isomorphic as annotated induced subgraphs since~$N_G[F_i] \cap N_G[F_{\neg i}] = \emptyset$, and thus in particular~$N_G[F_i] \cap N_G[v] = \emptyset$.

It remains to argue the case~$|F_i|=|F_{\neg i}|-1$.
Note that $\mathcal{E}_{\neg i}$ is reconstructible since it contains exactly the cards in~$\mathcal{E}$ with equally sized flanks.
In each card~$G_v\in \mathcal{E}_{\neg i}$ there are two flanks~$F'$ and~$F''$ giving us two graphs~$H_{F'}$ and~$H_{F''}$. One of these graphs is isomorphic to~$H_{F_i}$ while the other one isomorphic to~$H_{ F_{\neg i}}-v$.
However, we already know that~$H_{ F_{\neg i}}$ is reconstructible (since~$|F_{\neg i}|\neq |F_i|+1$) and we can determine all graphs~$H_{ F_{\neg i}}-v$ with~$G_v\in \mathcal{E}_{\neg i}$. For this, we only need to observe that for a vertex~$v\in F_{\neg i}$ we have~$G_v\in \mathcal{E}_{\neg i}$ exactly if $H_{ F_{\neg i}}-v$ is connected, since~$H_{ F_{\neg i}}-v$ is connected if and only if~$G_v$ is connected and the other properties of the cards in~$\mathcal{E}$ are maintained since $v \in V(G)\setminus N_G[\bulk(G)]$.
If for some~$v\in \mathcal{E}_{\neg i}$ the graph $H_{ F_{\neg i}}-v$ is isomorphic to~$H_{ F_{i}}$, then~$H_{ F_{i}}\cong H_{F'}\cong H_{F''}$ is reconstructible. Otherwise~$H_{ F_i}$ cannot  appear as a graph $H_{ F_{\neg i}}-v$ for some~$v\in \mathcal{E}_{\neg i}$, and thus out of the two options $H_{F'},H_{F''}$ we know that $H_{ F_i}$ is the graph not appearing as $H_{ F_{\neg i}}-v$ for some~$v\in \mathcal{E}_{\neg i}$.
\end{proof}

\begin{lemma}\label{lem:two:flanks:reconstruc}
If~$|F_1|>0$ and~$|F_2|>0$, then~$G$ is reconstructible.
\end{lemma}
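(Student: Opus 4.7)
My plan is to invoke Reconstruction-by-Separation (Lemma~\ref{lem:reconst:clique:sep:implies:reconst}) applied to the clean clique separation $(F_1,R_1,B_1)$ furnished by Lemma~\ref{lem:flanks:create:clean:clique:sep}. Let $G'$ be any graph with $\mathcal{D}(G')=\mathcal{D}(G)$. By Lemma~\ref{lem:no:flanks:can:be:decided} and Lemma~\ref{lem:sizes}, $G'$ also has two non-empty flanks whose multiset of sizes matches that of $G$; by Lemma~\ref{lem:HFi:reconstr}, after reindexing the flanks if necessary, we may assume $H_{F_i^G}\cong H_{F_i^{G'}}$ as annotated subgraphs for $i\in\{1,2\}$. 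It therefore suffices to produce an annotated isomorphism $H_{B_1^G}\cong H_{B_1^{G'}}$, after which Lemma~\ref{lem:reconst:clique:sep:implies:reconst} immediately yields $G\cong G'$.

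To reconstruct $H_{B_1}$, I plan to apply the Distant Vertex Lemma (Lemma~\ref{lemma:farthest:away:and:at:least:two:reconstruct:other:side}) to the same separation, but with the roles of $A$ and $B$ swapped, i.e., as $(B_1,R_1,F_1)$, so that $F_1$ plays the role of the ``$B$-side''. In the main case $|F_1|\geq 2$, I choose a vertex $b\in F_1^G$ with $G-b$ connected and $|N_G(b)\cap R_1^G|$ minimal under this constraint; both conditions can be read off from the deck together with the already-known $H_{F_1^G}$, since every neighbor of $b$ lies in $F_1\cup R_1$. The critical structural observation, using the linearly ordered $F_1$-neighborhoods of the vertices of $R_1$ (Lemma~\ref{lem: clean implies linearly ordered nbhds}), is that minimality of $|N_G(b)\cap R_1^G|$, combined with $|F_1|\geq 2$, forces that no vertex of $R_1$ has $b$ as its sole $F_1$-neighbor; consequently $R_1^{G-b}=R_1^G$, and the triple $(B_1^G,R_1^G,F_1^G\setminus\{b\})$ is intrinsically recoverable from the card $G-b$ via the structural definitions of flank, bulk, and $R$. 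A matching card $G'-b'$ exists in $\mathcal{D}(G')$, and the same intrinsic identification in $G'-b'$ yields an isomorphism $\varphi\colon G-b\to G'-b'$ with $\varphi(B_1^G)=B_1^{G'}$ and $\varphi(R_1^G)=R_1^{G'}$. The neighbor-degree-multiset hypothesis of the Distant Vertex Lemma is met because the $G$-degrees of the vertices of $R_1$ and of the $F_1$-neighbors of $b$ are all encoded in $H_{F_1^G}\cong H_{F_1^{G'}}$. The Distant Vertex Lemma then delivers $H_{B_1^G}\cong H_{B_1^{G'}}$.

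The remaining case $|F_1|=1$, say $F_1^G=\{v\}$, lies outside the Distant Vertex Lemma's scope but is handled directly. The card $G-v$ has vertex set $V(H_{B_1^G})$ and, as an unlabeled graph, coincides with $H_{B_1^G}$. I identify the correct card in the deck as the member of $\mathcal{E}$ in which the smaller flank has disappeared (resolving the symmetric subcase $|F_2|=1$ by matching the already-known $H_{F_1}$). Inside the card, the clique $R_1^G=N_G(v)$ is pinned down by the graph structure and the $G$-degree multiset prescribed by $H_{F_1^G}$; any two admissible identifications yield isomorphic annotated graphs, so annotating each vertex of $R_1^G$ by $\deg_{G-v}(\cdot)+1$ unambiguously recovers $H_{B_1^G}$.

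The principal obstacle I anticipate is the consistent handling of flank symmetries, especially when $|F_1|=|F_2|$ or when $|F_1|-1=|F_2|$, where the two flanks of $G$ (respectively of the card $G-b$) become indistinguishable by size alone, as well as the location of $R_1$ inside $G-v$ when $|F_1|=1$. Both obstacles are resolved by fixing a flank labeling consistently across $G$ and $G'$ via the already-matched annotated subgraphs $H_{F_1^G}\cong H_{F_1^{G'}}$ and $H_{F_2^G}\cong H_{F_2^{G'}}$, which simultaneously pins down the corresponding ``$F_1$-side'' in each card.
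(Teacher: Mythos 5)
Your treatment of the generic case $|F_1|\geq 2$ with $|F_1|\neq|F_2|$ is essentially the paper's own argument: the paper's Case~1 also picks $v\in F_1$ with $G_v\in\mathcal{E}_1$ and $|N_G[v]\cap R_1|$ minimal, verifies $\bulk(G-v)=\bulk(G)$ so that the card intrinsically reveals $(B_1,R_1,F_1\setminus\{v\})$, and invokes Lemma~\ref{lemma:farthest:away:and:at:least:two:reconstruct:other:side} followed by Lemma~\ref{lem:reconst:clique:sep:implies:reconst}. Your handling of the symmetry $|F_1|=|F_2|>1$ is vaguer than the paper's Case~3 (which reconstructs the multiset $\leftmset H_{B_1},H_{B_2}\rightmset$ and pairs each $H_{B_i}$ with the correct $H_{F_i}$ by reading $H_{F_{\neg i}}$ off the unique flank inside $H_{B_i}$), but it is repairable along those lines.

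The genuine gap is your case $|F_1|=1$. Your claim that, inside the card $G-v$, the clique $R_1=N_G(v)$ ``is pinned down by the graph structure and the $G$-degree multiset'' and that ``any two admissible identifications yield isomorphic annotated graphs'' is exactly the statement that needs proof, and nothing in your argument supplies it: knowing a card together with the degree multiset of the deleted vertex's neighborhood does not in general determine where to reattach the vertex, and distinct cliques of $G-v$ realizing that multiset can produce non-isomorphic completions. This is precisely where the bulk of the paper's proof lives. When $|F_2|>2$ the paper sidesteps the problem by running the whole argument on the larger flank (Case~2), which your proposal does not do; and when $|F_2|\in\{1,2\}$ (the paper's Cases~4 and~5) no such escape exists, and the paper needs extensive sub-case analyses on the twin structure of $\Vmaxdeg(G)$, on whether $G-a$ is connected, on chains of degree-$2$ vertices, and on auxiliary clean clique separations built from bordering classes, together with repeated uses of Lemma~\ref{lemma:farthest:away:and:at:least:two:reconstruct:other:side} on several different cards, to resolve the ambiguity. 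A further symptom of the same problem: for $|F_1|=|F_2|=1$ the two cards $G_{f_1}$ and $G_{f_2}$ may be indistinguishable, so even selecting ``the card in which the smaller flank has disappeared'' is not well defined. As written, your argument for $|F_1|=1$ assumes the hard part rather than proving it.
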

\begin{proof}
By symmetry, we can assume that~$|F_1|\leq |F_2|$.
\paragraph{Case 1:~$\mathbf{1 < |F_1| <|F_2|}$.}\leavevmode
In this case~$\mathcal{E}_1$ is reconstructible by Lemma~\ref{lem: E-recognizable} and Lemma~\ref{lem:each:flank:represented:in:E}.
Furthermore, by~Lemma~\ref{lem:flanks:create:clean:clique:sep}, we know that~$\partition_G(F_1)= (F_1,R_1,B_1)$ is a clean clique separation.
Let~$G'$ be a graph with $\mathcal{D}(G) = \mathcal{D}(G')$.
By Lemma~\ref{lem:sizes}, the graph~$G'$ has flanks~$F_1'$ and~$F_2'$ of sizes~$|F_1|$ and~$|F_2|$, respectively. There is thus an analogous separation~$(F_1',R_1',B_1')$ in~$G'$.

We show that~$H_{B_1}\cong H_{B_1'}$ as annotated graphs.
For~$G'$ there are multisets~$\mathcal{E}_1'$ and~$\mathcal{E}_2'$ analogous to~$\mathcal{E}_1$ and~$\mathcal{E}_2$ of~$G$.
Consider a vertex $v\in V(G)$ with $G_v \in \mathcal{E}_1$ for which~$|N_G[v]\cap R_1|$ is minimal. There is a vertex~$v' \in V(G')$ with $G'_{v'}\in \mathcal{E}_1'$ such that there is an isomorphism~$\varphi$ from~$G-v$ to~$G'-v'$ and~$\leftmset d_{G}(u)\colon u\in N_{G}(v)\rightmset=\leftmset d_{G'}(u)\colon u\in N_{G'}(v')\rightmset$.
By Lemma~\ref{lem: connected-vmaxdeg} Part~\eqref{itm:minus-flank-vertex}, we have~$N_{G-v}[\bulk(G-v)] = N_G[\bulk(G)]$ and $N_{G'-v'}[\bulk(G'-v')] = N_{G'}[\bulk(G')]$.
By the choice of $v$ and since $|F_1| > 1$ every vertex of $N_{G-v}[\bulk(G-v)]$ which has a neighbor in $F_1$ in $G$ also has a neighbor in $F_1\setminus \{v \}$ in $G-v$ and, hence $\bulk(G-v) = \bulk(G)$ and~$\bulk(G'-v') = \bulk(G')$.
 It follows that~$\varphi(R_1) = R_1'$ and~$\varphi(B_1)= B_1'$.  Also note that~$|F_1|=|F_1'|\geq 2$.
This means that all requirements of Lemma~\ref{lemma:farthest:away:and:at:least:two:reconstruct:other:side} are satisfied so~$H_{B_1}\cong H_{B_1'}$ as annotated graphs.
We have thus managed to reconstruct~$[H_{B_1}]_{\cong}$.

Using Lemma~\ref{lem:HFi:reconstr}, we can reconstruct the multiset $\leftmset [H_{F_1}]_{\cong},[H_{F_2}]_{\cong}\rightmset$. Since~$|F_1|\neq |F_2|$, we can reconstruct~$[H_{F_1}]_{\cong}$ given $\leftmset [H_{F_1}]_{\cong},[H_{F_2}]_{\cong}\rightmset$.
Overall, we have reconstructed the pair of annotated graphs~$([H_{F_1}]_{\cong},[H_{B_1}]_{\cong})$ coming from a clean clique separation. By Lemma~\ref{lem:reconst:clique:sep:implies:reconst}, the graph $G$ is reconstructible.

\paragraph{Case 2:~ $\mathbf{|F_1| = 1}$ and $\mathbf{2<|F_2|}$.}\leavevmode
 In this case, we use the same technique as in the previous case except that we choose~$G_x$ from~$\mathcal{E}_2$ rather than from~$\mathcal{E}_1$. Since~$|F_2| > |F_1|+1$ by Lemma~\ref{lem: E-recognizable} and Lemma~\ref{lem:each:flank:represented:in:E}, each of the multisets~$\leftmset G_v\colon v\in \mathcal{E}_1\rightmset$ and~$\leftmset G_v\colon v\in \mathcal{E}_2\rightmset$ is reconstructible. In~$G_x$ with~$G_x\in \mathcal{E}_2$, we can locate the unique flank of size~$|F_2|-1$.
 The annotated graphs $(H_{F_2},H_{B_2})$ are recovered in the same manner as~$(H_{F_1},H_{B_2})$ are recovered in the previous case.

\paragraph{Case 3:~$\mathbf{1 < |F_1|=|F_2|}$.}\leavevmode
With the same technique as described before, we can reconstruct $\leftmset [H_{B_1}]_{\cong}, [H_{B_2}]_{\cong} \rightmset$ but it is not clear which element of the multiset belongs to which flank. However, for each $i \in \{1,2\}$ the graph~$H_{B_i}$ has exactly one non-empty flank and that flank together with its neighbors induces the graph~$H_{F_{\neg i}}$. From this, we can determine the graph~$H_{F_i}$ that corresponds to~$H_{B_i}$ by either taking the graph non-isomorphic to~$H_{F_{\neg i}}$ or, if~$H_{F_1}\cong H_{F_2}$ by choosing arbitrarily.
Again, we have reconstructed a pair of annotated graphs~$(H_{F_i},H_{B_i})$ coming from a clean clique separation and then, by Lemma~\ref{lem:reconst:clique:sep:implies:reconst}, the graph is reconstructible.

\paragraph{Case 4:~$\mathbf{|F_1| = 1}$ and $\mathbf{|F_2| =2}$.}\leavevmode
Say~$F_1=\{f_1\}$ and $F_2= \{f_2,f_2'\}$. 
By Lemma~\ref{lem: E-recognizable} and Lemma~\ref{lem:each:flank:represented:in:E}, the multisets~$\mathcal{E}_1$ and~$\mathcal{E}_2$ are reconstructible. In particular $G_{f_1}$ is reconstructible and we can determine whether~$f_2$ and~$f_2'$ are adjacent\footnote{This slightly informal statement means that the map that associates with every graph~$G$ the pair~$(G_{v_1},b)$ consisting of an unlabeled graph~$G_{v_1}$ and the boolean~$b=\text{``}\{v_2,v_3\} \in E(G)\text{''}$ whenever~$G$ has a flanks~$F_1=\{v_1\}$ and~$F_2= \{v_2,v_3\}$ of sizes~$1$ and~$2$ and the empty set, say, otherwise, is reconstructible.}.
We may assume without loss of generality that~$\deg_G(f_2)\geq \deg_G(f_2')$. This means
$N[f_2]\supseteq N[f_2']$ if~$f_2$ and~$f_2'$ are adjacent and $N(f_2)\supseteq N(f_2')$ if they are not. In particular, if $\deg_G(f_2) = \deg_G(f_2')$, then~$f_2$ and~$f_2'$ are true or false twins.

If~$\deg_{G-f_2'}(f_2)\neq \deg_G(f_1)$, then in~$G_{f_2'}$ there are two flanks each containing precisely one vertex and these vertices are of different degree. 
We can reconstruct the graph~$G$ from~$G_{f_2'}$: we add a vertex~$x$ to~$G_{f_2'}$ and connect~$x$ and~$f_2$ if~$f_2$ and~$f_2'$ are adjacent\footnote{More formally, this means that the graph obtained from the unlabeled graph~$G_{f_2'}$ by the operation of adding~$x$ in the described fashion must be isomorphic to~$G$. This renders~$G$ reconstructible.}. For each neighbor that~$f_2'$ has in~$G$ of degree~$d$, we join~$x$ to a neighbor of~$f_2$ of degree~$d-1$. Since neighbors of~$f_2$ of equal degree in~$G-f_2'$ are twins, this reconstructs the graph~$G$.
We can thus assume that~$\deg_{G-f_2'}(f_2)= \deg_G(f_1)$.

\begin{itemize}
\item \textit{Case a: $\Vmaxdeg(G)$ is a twin equivalence class of $G$ and contains more than one vertex.} 
We argue that $\max\{\deg_G(f_1), \deg_G(f_2), \deg_G(f_2')\} \leq \maxdeg(G)-2$. Since~$\deg(f_2')\leq \deg(f_2)$ it suffices to show this for~$f_1$ and~$f_2$. Both the vertices~$f_i$ have a neighbor~$r_i$ of degree at most~$\maxdeg(G)-1$ that is adjacent to at least two vertices of maximum degree.
We obtain $|N_G(f_1)| \leq |N_G[r_1]\setminus (\Vmaxdeg(G) \cup \{f_1\}) | \leq \maxdeg(G)-3$
and $|N_G(f_2)| \leq |(N_G[r_1] \cup \{f_2'\}) \setminus (\Vmaxdeg(G) \cup \{f_2\})| \leq \maxdeg(G)-2$.

Let $v \in \Vmaxdeg(G)$.
Since there is only one twin equivalence class of vertices of maximum degree in $G$, the degree of every vertex in $V(G)\setminus F_G$ is reduced by exactly one in~$G_v$.
Altogether, we obtain $\Vmaxdeg(G) = \Vmaxdeg(G-v) \cup \{v\}$.
Hence,
we can reconstruct~$G$ by duplicating a vertex of degree~$\maxdeg(G)-1$ in $G_v$.

\item \textit{Case b: $\Vmaxdeg(G)$ partitions into at least two twin equivalence classes of $G$.}
For~$i\in \{1,2\}$ let~$w_i$ in $\Vmaxdeg(G)$ be a bordering vertex such that
$|N_G(w_i) \cap R_i|$ is maximized.
The set~$V(G)$ can be partitioned into~$A_i\coloneqq \{v\in V(G)\colon N[v]\subseteq N[w_i] \cup F_i\}$,~$C_i\coloneqq N[w_i] \setminus A_i$ and~$B_i\coloneqq V(G)\setminus (A_i\cup C_i)$.

Note that~$B_i$ contains at least two vertices: Indeed~$B_1 \supseteq \{f_2, f_2'\}$, while~$B_2$ contains~$f_1$ and a vertex that is adjacent to a vertex of maximum degree but not to~$w_2$, since otherwise there would be only one twin equivalence class of vertices of maximum degree. Note that~$(A_i,C_i,B_i)$ is a clean clique separation.

From $G_{f_1}$, we can reconstruct~$H_{A_2}$ as annotated graph: The graph~$G-f_1$ has exactly one non-empty flank, namely~$F_2$. It also has exactly one equivalence class of bordering vertices whose side is that flank. We can thus locate the vertices corresponding to~$A_2$ in~$G_{f_1}$ and then locate the vertices corresponding to~$C_2$. Now reconstructibility of~$H_{A_2}$ follows from Lemma~\ref{lemma:farthest:away:and:at:least:two:reconstruct:other:side}.

 From~$G_{f_1}$, we can also reconstruct the graph~$H_{B_1}= G[C_1\cup B_1]$ as annotated subgraph using similar arguments with the other class of bordering vertices. 
 Using exactly the same method we can reconstruct~$\widehat{H}= (G-f'_2)[C_1\cup B_1]$ as annotated subgraph by additionally deleting the flank vertex of smallest degree.

From~$G_{f_2'}$, we can reconstruct the multiset~$\leftmset H_{C_2}, \widehat{H} \rightmset$ of annotated subgraphs of~$G-{f_2'}$. 
Note that for~$H_{B_2}$ the annotation as a subgraph of~$G$ and the annotation as a subgraph of~$G-f_2'$ are the same.

Overall, we can reconstruct~$(H_{A_2},H_{B_2})$ and thus~$G$ is reconstructible by Lemma~\ref{lem:reconst:clique:sep:implies:reconst}.

\item \textit{Case c: $|\Vmaxdeg(G)| = 1$.}
Let $a$ be the vertex in $\Vmaxdeg(G)$.

If~$G-a$ is not connected, then we can reconstruct~$G$:
 In~$G_{a}$, we observe the multiset of isomorphism types of connected components of $G-a$.
In~$G_{f_1}$, we can observe the component~$C$ containing~$f_2$ and~$f_2'$ and how it is attached to~$a$.
From~$G_{f_2'}$ and~$G_{f_2}$ we can then observe at most two components containing exactly one special vertex not adjacent to~$a$. If there is only one component or the two components are isomorphic mapping the special vertices to each other, we have reconstructed all components with multiplicity.
If there are two components not isomorphic via an isomorphism respecting the special vertices, then one of them is~$C-\{f_2\}$ or~$C-\{f_2'\}$ and we can determine which one it is. The other component is the last component we need.

We can thus assume that~$G-a$ is connected.
Let~$x_1, x_2,\ldots,x_t$ be a maximal sequence of vertices forming a path with~$x_1=f_2$ and for~$i>1$ the vertex~$x_i$ has degree at most~$2$ in~$G-a-f_2'$ (possibly~$t=1$).
If this sequence contains all vertices of~$G-a-f_2'$ then~$G-f_2'$ has an automorphism interchanging~$f_1$ and~$f_2$ and hence~$G$ is reconstructible.
Otherwise let~$w$ be a neighbor of~$x_t$ of maximum degree in~$G-a-f_2'$ and~$w'$ a neighbor of~$x_t$ of minimum degree different from~$w$.

By assumption~$\deg_{G-f_2'}(f_2)=\deg_G(f_1)$ and, hence,
neither~$f_1$ nor a neighbor of~$f_1$ is contained in~$\{x_1,\ldots,x_t,w\}$. 
We can reconstruct~$G$ as follows: We use a card~$G_{v'}$ with~$v'$ a twin of~$w'$ or~$w'$ itself. Consider the triple~$(A,B,C)$ with~$A= \{v\colon N[v]\subseteq 
N[\{x_1,\ldots,x_t,w\}]\}$, with~$B= N[A]\setminus A$, and with~$C$ the rest of the graph.
From~$G_{v'}$ we can determine the annotated graph~$H_C$ and from~$G_{f_1}$ we can determine~$H_{A}$. We obtain a reconstructible clean clique separation and the graph is reconstructible by Lemma~\ref{lem:reconst:clique:sep:implies:reconst}.

\end{itemize}

\paragraph{Case~5:~$\mathbf{|F_1| = |F_2| = 1}$.}\leavevmode
Say $F_1 = \{f_1\}$ and $F_2 = \{f_2\}$.

\begin{itemize}

\item \textit{Case a: there is exactly one twin equivalence class and it has more than one vertex.} 

This case is analogous to Case 4a.

\item \textit{Case b: There are at least two twin equivalence classes of maximum degree vertices.}

Similarly to Case 4b, let $w_i$ be a vertex of maximum degree that has a common neighbor with a vertex in~$F_i$. The vertex set of~$G$ can be partitioned into~$A_i\coloneqq \{v\in V(G)\colon N[v]\subseteq N[w_i] \cup F_i\}$,~$B_i\coloneqq N[w_i] \setminus A_i$ and~$C_i\coloneqq V(G)\setminus (A_i\cup B_i)$.

We argue that~$\leftmset (H_{A_1},H_{C_2}), (H_{A_2},H_{C_1})\rightmset$ is reconstructible.

For~$i\in \{1,2\}$ consider~$G_{f_i}$. In this graph there is exactly one equivalence class of vertices of maximum degree that only has one side. Let~$w$ be a vertex in that equivalence class. The set~$A_i$ consists of~$f_i$ and those vertices~$v$ of~$G_{f_i}$ that satisfy~$N[v]\subseteq N[w]$ in~$G_{f_i}$.
The set~$B_i$ consists of the vertices adjacent to~$w$ but also adjacent to a vertex not in~$A_i$ and the set~$C_i$ consists of all other vertices. Since no vertex in~$B_i$ can be adjacent to~$f_i$ in~$G$, the graph~$H_{C_i}$ is the induced subgraph~$G_w[B_i\cup C_i]$, which can be obtained from~$G_{f_i}$ together with its annotation. The graph~$H_{A_{\neg i}}$ is the graph induced by all vertices not adjacent to~$w$ and all their neighbors in~$G_{f_i}$. Since in~$G$ none of these vertices are adjacent to~$f_i$ we can recover the graph~$H_{A_{\neg i}}$ with annotation.
Overall from the card~$G_{f_i}$ we can recover the pair~$(H_{A_{\neg i}},H_{C_i})$. 
While we may not be able to distinguish between~$G_{f_1}$ and~$G_{f_2}$, we can recover~$\leftmset (H_{A_1},H_{C_2}), (H_{A_2},H_{C_1})\rightmset$ as a multiset. From this, we can recover~$\leftmset (H_{A_1},H_{C_1}),(H_{A_2},H_{C_2})\rightmset$ and thus have recovered a clean clique separation of~$G$. (In fact, we have even recovered two clean clique separations of~$G$.) Lemma~\ref{lem:reconst:clique:sep:implies:reconst} implies that~$G$ is reconstructible.
\item \textit{Case c: $|\Vmaxdeg(G)| = 1$.}

Denote the unique vertex in~$\Vmaxdeg(G)$ by~$a$.
If~$G-a$ is a path, then $f_1$ and $f_2$ are the degree 1 vertices in~$G-a$.
We can reconstruct~$G$ by adding a vertex adjacent to all other vertices.
If~$G-a$ is disconnected, then we can reconstruct~$G$ with arguments similar to what we used at the beginning of Case 4c.

We now assume~$G-a$ is connected.
We consider paths in~$G-a$ starting from the flank vertices~$f_i$ with vertices of degree exactly~$2$ in~$G-a$. 
Since~$G-a$ is connected, there cannot be three vertices of degree~$1$ that have neighbors of degree 2 (because this holds for interval graphs in general, since there are no asteroidal triples).
This means, if the paths have length at least 2, then we can find the neighbor of the vertex~$f_i$ in~$G-f_i-a$. Otherwise we can reconstruct a clean clique separation, as before in Case 4c, using vertices~$w$ and~$w'$ not of degree 2. More precisely we reconstruct $\leftmset (H_{A_1},H_{C_2}), (H_{A_2},H_{C_1})\rightmset$ and thereby~$\leftmset (H_{A_1},H_{C_1}),(H_{A_2},H_{C_2})\rightmset$ for suitable triples~$(A_i,B_i,C_i)$.
Overall the graph is reconstructible.
\qedhere
\end{itemize}
\end{proof}

\section{The outsiders}\label{sec:outsiders}

In order to deal with graphs that do not have two flanks, we use the concept of an outsider. Intuitively an outsider corresponds to an interval that is extremal on an end of the interval graph that does not have a flank.

Throughout this chapter we assume that~$G$ is connected, has no universal vertex, and at most one flank, since otherwise~$G$ is reconstructible by Lemma~\ref{lem: reconstructible-graph-classes} and Section~\ref{sec: two-flanks}, respectively.

Let
\[A \coloneqq A(G) \coloneqq \{z \in \Vmaxdeg(G)\colon z~\text{has at most one side}\}.\]

It follows with Lemma~\ref{lem: bordering-means-flank-is-side} that~$A \neq \emptyset$ if and only if~$G$ does not have two flanks.

\begin{lemma}\label{lem:extreal:max:deg:vertices}
The set~$A$ is composed of at most two twin equivalence classes. The number of non-empty flanks plus the number of twin equivalence classes in~$A$ add up to~2.
\end{lemma}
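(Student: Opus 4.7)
The plan is to leverage the linear order on $\Vmaxdeg(G)$ from Lemma~\ref{lem:order:of:max:deg:vertices} to restrict $A$ to at most the two extremal twin classes, and then match each of these classes to the absence of a flank on its corresponding side.

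First I would fix a tidy interval representation (Lemma~\ref{existence:tidy}). Applying the tidy condition with $M = \{v'\}$ to two false-twin max-degree vertices $v, v'$ yields $[\ell_v, r_v] \subseteq [\ell_{v'}, r_{v'}]$ and, by symmetry, equality; by the (unlabelled) lemma immediately following Lemma~\ref{existence:tidy}, max-degree twins are always false twins in our setting (the $E(G) = \emptyset$ alternative is excluded since $G$ is connected with at least two vertices). Consequently, membership in $A$ is constant on each twin class of $\Vmaxdeg(G)$. Moreover, Lemma~\ref{lem: flanks}(ii) applied to $S = \{v\}$ with $v \in \Vmaxdeg(G)$ identifies the two sides of $v$ as $L_G(\{v\}) = \{u : r_u < \ell_v\}$ and $R_G(\{v\}) = \{u : \ell_u > r_v\}$. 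Since $G$ has no universal vertex, at least one is non-empty, so $v \in A$ exactly when precisely one is empty.

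For the ``at most two'' bound, let $T_1, \ldots, T_k$ be the twin classes of $\Vmaxdeg(G)$ in the order of Lemma~\ref{lem:order:of:max:deg:vertices}, and pick $v \in T_i$ with $1 < i < k$. Choosing any $u \in T_1$, the two vertices $u$ and $v$ have distinct closed neighborhoods of equal size $\maxdeg(G)+1$, so there exists $w \in N_G[u] \setminus N_G[v]$. If $w = u$, then $u$ and $v$ are non-adjacent and $[\ell_u, r_u] \preceq [\ell_v, r_v]$ yields $r_u < \ell_v$, hence $u \in L_G(\{v\})$. Otherwise $w$ overlaps $u$ but not $v$, and the case $\ell_w > r_v$ is impossible since $\ell_w \leq r_u \leq r_v$, so $r_w < \ell_v$ and $w \in L_G(\{v\})$. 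A symmetric argument using a witness in $T_k$ shows $R_G(\{v\}) \neq \emptyset$. Therefore $v \notin A$ and $A \subseteq T_1 \cup T_k$. For the exact count I would compute $L_G(\{v\}) = F_1$ for $v \in T_1$ and $R_G(\{v\}) = F_2$ for $v \in T_k$, which follows by applying Lemma~\ref{lem: flanks}(ii) to the connected set $S = \bulk(G)$ (connected by Lemma~\ref{lem:nucleus:and:span}(iii)) together with the identity $\ell_{\bulk(G)} = \ell_{\Vmaxdeg(G)}$ read off Lemma~\ref{lem:nucleus:and:span}(iv). If $T_1 \neq T_k$, then $T_1 \subseteq A \Leftrightarrow F_1 = \emptyset$ and $T_k \subseteq A \Leftrightarrow F_2 = \emptyset$, so under our at-most-one-flank assumption the target sum $2$ is achieved in both sub-subcases. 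If $T_1 = T_k$, the unique class lies in $A$ iff at least one flank is empty, which is automatic.

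The main obstacle is the sub-case $T_1 = T_k$, where the naive per-side contribution rule fails to reach $2$ unless a flank is actually present. The resolution is to show that $T_1 = T_k$ \emph{forces} $F_G \neq \emptyset$: all max-degree vertices are then pairwise false twins with a common closed neighborhood $N$, and $N \neq V(G)$ since $G$ has no universal vertex. Because $\Vmaxdeg(G)$ is a clique and thus connected, Lemma~\ref{lem: connected-vmaxdeg}(i) gives $N_G[\bulk(G)] = N_G[\Vmaxdeg(G)] = N$, so $F_G = V(G) \setminus N \neq \emptyset$ and we are in the one-flank case. This is the step I expect to require the most care, and it is the only place where the section-wide ``no universal vertex'' hypothesis enters essentially.
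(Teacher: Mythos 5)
Your proof is correct and follows essentially the same route as the paper: both restrict $A$ to the two extremal (bordering) twin classes of $\Vmaxdeg(G)$ via the linear order of Lemma~\ref{lem:order:of:max:deg:vertices}, and then split on whether there are one or two such classes, matching each extremal class's membership in $A$ to the emptiness of its associated flank (with the no-universal-vertex hypothesis forcing a flank in the single-class case). The only difference is cosmetic: you re-derive facts such as $A\subseteq T_1\cup T_k$ and $L_G(\{v\})=F_1$ directly from a tidy interval representation, where the paper cites Lemmas~\ref{lem: bordering-means-flank-is-side} and~\ref{lem:side:versus:flank} as black boxes.
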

\begin{proof} 
Let~$T$ be the set of bordering maximum degree vertices of~$G$.
Note that all vertices in~$A$ are bordering and, hence, $A \subseteq T$.
By  Lemma~\ref{lem:order:of:max:deg:vertices}, the set~$T$ is comprised of at most two twin equivalence classes.

If~$T$ consists of only one twin equivalence class, then $\Vmaxdeg(G) = T$ and the sides of vertices in~$T$ are precisely the flanks of $G$. 
Since we assume in this section that $G$ has at most one flank and  no universal vertex there is precisely one flank and~$T=A$.

If~$T$ consists of two equivalence classes, then
for each equivalence class of~$T$, either the vertices have two sides, one of which is a flank by Lemma~\ref{lem: bordering-means-flank-is-side} or the equivalence class is contained in~$A$.  
Finally, note that if~$T$ has two equivalence classes both with vertices that have two sides, then~$G$ has two flanks and~$A$ is empty (Lemma~\ref{lem:side:versus:flank}).
\end{proof}

In the following, we assume~$A_1$ and~$A_2$ to be the twin equivalence classes in~$A$.
Let $i \in \{1,2\}$ with $A_i \neq \emptyset$.
Observe that  $\Span(A_i) = \{v \in V(G)\colon N_G[v] \subseteq N_G[A_i]\}$ since $A_i$ is a twin equivalence class of $G$.
Equivalently stated, for every tidy interval representation $\{ I_u\colon u \in V(G) \}$ of $G$ a vertex $v$ is in $\Span(A_i)$ precisely if $I_v \subseteq I_x$ for $x \in A_i$.
Let us recall that by definition~$\partition_G(\Span(A_i)) \coloneqq (\Span(A_i),N_G(\Span(A_i)),\NTwo{G}{\Span(A_i)} )$.
Observe that none of the sets in $\partition_G(\Span(A_i))$ are empty since $G$ has no universal vertex.

\begin{definition}
	Set \[O_i \coloneqq  \{o \in V(G)\colon N_G[o] \subseteq \Span_G(A_i) \} .\]
	We say that a vertex in $O_i$ is \emph{an outsider of} $A_i$ and $O_i$ is an \emph{outsider class} of $A_i$ (and of $G$).
	The vertices in~$O_1\cup O_2$ are the \emph{outsiders} of~$G$.
\end{definition}

\begin{lemma} \label{lem: properties of o}
	Let $i \in [2]$ with $A_i \neq \emptyset$.
	If $o$ is a vertex of $O_i$, then
		\begin{equation}
			\deg_G(o) < \maxdeg(G)~\text{and}~N_G(o) \cap \Vmaxdeg(G) = A_i.
		\end{equation}
\end{lemma}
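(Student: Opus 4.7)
My plan is to first establish the auxiliary fact $A_i \cap O_i = \emptyset$ and then deduce both assertions by a short cardinality argument. Two observations set the stage. First, as remarked in the paragraph preceding the definition of $O_i$, we have $\Span(A_i) = \{v \in V(G)\colon N_G[v] \subseteq N_G[A_i]\}$ since $A_i$ is a twin equivalence class. Second, by the earlier lemma that maximum-degree twins in an interval graph with at least one edge are false twins, the vertices of $A_i$ are mutual false twins; in particular $N_G[A_i] = N_G[a]$ for any $a \in A_i$, so $|N_G[A_i]| = \maxdeg(G) + 1$.

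To show $A_i \cap O_i = \emptyset$ I will argue by contradiction, assuming some $a \in A_i$ lies in $O_i$. Since $a \in A$ has at most one side and since $G$ is connected without a universal vertex, the set $V(G) \setminus N_G[a]$ is non-empty. I pick $u \in V(G) \setminus N_G[a]$ and, using connectedness of $G$, a path $v_0 = u, v_1, \ldots, v_k = a$. Let $j$ be the least index with $v_j \in N_G[a]$; then $j \geq 1$ (since $u \notin N_G[a]$) and $v_{j-1} \notin N_G[a]$. But $a \in O_i$ forces $v_j \in N_G[a] \subseteq \Span(A_i)$, hence $N_G[v_j] \subseteq N_G[A_i] = N_G[a]$, and therefore $v_{j-1} \in N_G(v_j) \subseteq N_G[a]$, contradicting the choice of $j$.

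With $A_i \cap O_i = \emptyset$ in hand, both assertions follow quickly. For the first, suppose toward contradiction that $\deg_G(o) = \maxdeg(G)$. Then $|N_G[o]| = \maxdeg(G) + 1 = |N_G[A_i]|$, and $o \in N_G[o] \subseteq \Span(A_i) \subseteq N_G[A_i]$ forces $N_G[o] = N_G[A_i] = N_G[a]$ for any $a \in A_i$. Thus $o$ is a false twin of $a$; in particular the sides of $o$ coincide with those of $a$, so $o \in A$, and being a twin of $a$ inside $A$ places $o \in A_i$, contradicting $A_i \cap O_i = \emptyset$. For $N_G(o) \cap \Vmaxdeg(G) = A_i$, the inclusion $\subseteq$ follows by the same cardinality argument applied to any $w \in N_G(o) \cap \Vmaxdeg(G)$: the chain $w \in \Span(A_i)$, $|N_G[w]| = |N_G[A_i]|$, $N_G[w] \subseteq N_G[A_i]$ gives $N_G[w] = N_G[A_i]$, so $w \in A_i$. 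Conversely, $o \in \Span(A_i) \subseteq N_G[A_i]$ combined with $o \notin A_i$ (from the first part) shows that $o$ is a neighbor of some vertex of $A_i$, and since the vertices of $A_i$ are mutual false twins, $o$ is adjacent to every vertex of $A_i$.

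The only non-routine step will be the auxiliary fact $A_i \cap O_i = \emptyset$; this is the single place where the one-sidedness of the vertices in $A$ is genuinely used. Pleasantly, the argument is purely combinatorial, using only connectedness of $G$ and the equivalent characterization of $\Span(A_i)$ via closed-neighborhood containment, with no recourse to an explicit interval representation.
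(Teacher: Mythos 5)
Your proof is correct and follows essentially the same route as the paper's: both rest on the closed-neighborhood characterization of $\Span(A_i)$, the cardinality/false-twin argument showing that any maximum-degree vertex in $\Span(A_i)$ must lie in $A_i$, and connectivity plus non-universality to exclude $A_i$ itself from $O_i$. Your only real addition is spelling out the path argument behind the paper's terse claim that a vertex of $A_i$ has a neighbor in $N_G(\Span(A_i))$; note, though, that this step uses only non-universality and connectivity, not the one-sidedness of $A$ as your closing remark suggests.
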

\begin{proof}
	Suppose $x \in \Vmaxdeg(G)$. 
	
	If $x \in A_i$, then $x$ has a neighbor in $\NTildeOne{G}{A_i}$ since $x$ is not universal.
	If on the other hand $x \notin A_i$, then $x \notin \Span(A_i)$ since $x$ is of maximum degree and, hence, has at least one neighbor which is not in~$N_G[A_i]$. Altogether, we obtain that $\Vmaxdeg(G)\cap O_i = \emptyset$, which proves the first part of the lemma.
	
	For the second part assume that $x \in N_G(o)$ for some outsider $o \in O_i$. Since $o$ is not adjacent to vertices in $\NTildeOne{G}{A_i} \cup    \NTildeTwo{G}{A_i}$, we obtain $x \in \Span(A_i)$. Every maximum degree vertex in $\Span(A_i)$ is in $A_i$, yielding $N_G(o) \cap \Vmaxdeg(G) \subseteq A_i$.
	The other inclusion follows since $o \in N_G[A_i]$ by definition and $A_i$ is a twin equivalence class of $\Vmaxdeg(G)$.
\end{proof}

\begin{lemma}\label{lem:anonempty-ononempty}
If~$A_i$ is non-empty, then~$O_i$ is non-empty. 
\end{lemma}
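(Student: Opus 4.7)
The plan is to work in a tidy interval representation $\{I_v=[\ell_v,r_v] : v\in V(G)\}$ of $G$ (which exists by Lemma~\ref{existence:tidy}) and exhibit an explicit outsider. By the observation preceding the lemma, $\Span_G(A_i) = \{v\in V(G) : I_v\subseteq I_x\}$ for any $x\in A_i$. Since $A_i$ is non-empty and $G$ has no universal vertex, $A_i$ has exactly one side; after reflecting the representation if necessary I may assume $L_G(\{x\})=\emptyset$, i.e.\ no vertex $v$ satisfies $r_v<\ell_x$. A short tidiness argument then shows $\ell_x=\min_{v\in V(G)}\ell_v$: if some $y$ had $\ell_y<\ell_x$, then $L_G(\{x\})=\emptyset$ forces $r_y\ge \ell_x$, a twin argument (twins have equal intervals in a tidy representation) excludes $r_y>r_x$, and a direct check produces $N_G[y]\subseteq N_G[x]$; applying tidiness with $M=\{x\}$ then gives $I_y\subseteq I_x$, contradicting $\ell_y<\ell_x$.

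Next I would dispose of the trivial case. If $N_G[A_i]=\Span_G(A_i)$, then each $v\in N_G[A_i]$ satisfies $N_G[v]\subseteq N_G[A_i]=\Span_G(A_i)$, so $N_G[A_i]\subseteq O_i$ and we are done. Otherwise, pick $v^*\in N_G[A_i]\setminus \Span_G(A_i)$ with $\ell_{v^*}$ minimal. Then $\ell_{v^*}>\ell_x$: equality together with $r_{v^*}>r_x$ (which holds since $v^*\notin \Span_G(A_i)$) would give $I_x\subsetneq I_{v^*}$, forcing $v^*$ to be a maximum-degree twin of $x$ and thus $I_{v^*}=I_x$ by tidiness, contradicting $r_{v^*}>r_x$.

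Finally, let $o$ minimize $r_o$. Then $r_o\le r_x$ and $\ell_o\ge \ell_x$, so $o\in \Span_G(A_i)$. The crux, and what I expect to be the principal obstacle, is showing $r_o<\ell_{v^*}$. I argue by contradiction: if $r_o\ge \ell_{v^*}$, then every vertex $u$ satisfies $r_u\ge r_o\ge \ell_{v^*}$, and combined with $\ell_u\le r_x<r_{v^*}$ for $u\in N_G[x]$ this yields $N_G[x]\subseteq N_G[v^*]$; tidiness with $M=\{v^*\}$ then forces $I_x\subseteq I_{v^*}$, hence $\ell_x\ge \ell_{v^*}$, contradicting $\ell_{v^*}>\ell_x$. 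With $r_o<\ell_{v^*}$ in hand, every $w\in N_G(o)$ satisfies $\ell_w\le r_o<\ell_{v^*}$; Lemma~\ref{lem:nucleus:and:span}~(ii) gives $w\in N_G[\Span_G(A_i)]=N_G[A_i]$, and if $w\in N_G[A_i]\setminus \Span_G(A_i)$ the minimality of $\ell_{v^*}$ would force $\ell_w\ge \ell_{v^*}$, a contradiction. Hence $w\in \Span_G(A_i)$, so $N_G[o]\subseteq \Span_G(A_i)$, i.e.\ $o\in O_i$.
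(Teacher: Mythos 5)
Your proof is correct, and it takes a genuinely different route from the paper's. The paper argues purely combinatorially and in two lines: it uses the facts that $N_G(\Span_G(A_i))$ is a clique each of whose members is adjacent to every vertex of $A_i$ and, not being in $\Span_G(A_i)$, has an additional neighbor outside $N_G[\Span_G(A_i)]$; a degree count against $\maxdeg(G)=\deg_G(x)$ for $x\in A_i$ (together with the linearly ordered neighborhoods of the separation $\partition_G(\Span_G(A_i))$) then forces some vertex of $N_G[A_i]\setminus A_i$ to have no neighbor in $N_G(\Span_G(A_i))$ at all, and such a vertex is by definition in $O_i$. You instead fix a tidy representation, normalize so that the empty side of $A_i$ is the left one, prove $\ell_x=\min_v\ell_v$, and exhibit the vertex with minimum right endpoint as an explicit outsider. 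I checked each step: the exclusion of $r_y>r_x$ via the twin argument, the derivation of $N_G[y]\subseteq N_G[x]$ (which does quietly use $L_G(\{x\})=\emptyset$ to rule out neighbors of $y$ ending before $\ell_x$ -- worth spelling out), the strict inequality $\ell_{v^*}>\ell_x$, and the contradiction from $r_o\ge\ell_{v^*}$ all go through; the final step correctly combines $N_G[\Span_G(A_i)]=N_G[A_i]$ with the minimality of $\ell_{v^*}$. The trade-off: the paper's argument is shorter and stays at the level of the separation machinery already built, while yours is longer but constructive -- it identifies a concrete outsider (the leftmost-ending interval), which is consistent with, and essentially re-derives a piece of, Lemma~\ref{lem: outsider-in-interval-rep}, and it avoids invoking the clique and linear-ordering facts about $N_G(\Span_G(A_i))$.
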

\begin{proof}
Since vertices in~$A_i$ have only one side, the vertices in~$\NTildeOne{G}{A_i}$ form a clique and they are all adjacent to each vertex of~$A_i$ (since $A_i$ is a twin equivalence class). However, vertices in~$\NTildeOne{G}{A_i}$ also have a neighbor in~$\NTildeTwo{G}{A_i}$ (which is non-empty since otherwise each vertex of $A_i$ is universal). Since vertices in~$A_i$ have maximum degree, there is some vertex~$o\in N[A_i] \setminus A_i$ that is not adjacent to any vertex in~$\NTildeOne{G}{A_i}$. We conclude that~$o\in O_i$.
\end{proof}

\begin{lemma}\label{lem:one:flank:means:max:deg:rivet}
For~$i\in \{1,2\}$, if~$|O_i|=1$, then there is a vertex of maximum degree~$x\in \NTildeOne{G}{A_i}$ that is adjacent to all vertices of~$N_G[A_i]$ except to the vertex $o_i$ in~$O_i$.

In particular, in $G-o_i$ the twin equivalence class $A_i'$ of $x$ is a class of bordering, maximum degree vertices of degree~$\maxdeg(G)$ and $F_{G-o_i} = F_{G}$.
\end{lemma}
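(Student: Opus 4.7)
The plan is the following. By Lemma~\ref{existence:tidy} fix a tidy interval representation of $G$ and reorient it so that $I_a = [0, r]$ for every $a \in A_i$ (so $A_i$ sits at the left end). Let $\ell^* \coloneqq \min\{\ell_u : u \in \NTildeOne{G}{A_i}\}$, which is well-defined since $A_i$ is not universal and $G$ is connected. By Lemma~\ref{lem:nucleus:and:span} Part~\eqref{itm:span-is-intuitive-span}, $\Span_G(A_i) = \{v : I_v \subseteq [0, r]\}$, so unpacking the definition of $O_i$ yields that $v \in O_i$ exactly when $v \notin A_i$ and $I_v \subseteq [0, \ell^*)$; the assumption $|O_i|=1$ therefore says $o_i$ is the unique such vertex. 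I take $x$ to be any vertex of $\NTildeOne{G}{A_i}$ with $\ell_x = \ell^*$ (a leftmost $\NTildeOne$-vertex). Since $G$ has an edge, the earlier lemma on twin maximum-degree vertices shows that $A_i$ is either a singleton or a false-twin clique, giving $\maxdeg(G) = |N_G[A_i]| - 1$.

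The crux is showing $\deg_G(x) = \maxdeg(G)$ and that $x$ is adjacent to every vertex of $N_G[A_i] \setminus \{o_i\}$. Uniqueness of $o_i$ implies that every $v \in \Span_G(A_i) \setminus A_i \setminus \{o_i\}$ has $r_v \geq \ell^* = \ell_x$ and is therefore adjacent to $x$; combined with $x$ being adjacent to every $a \in A_i$ (via the overlap $[\ell^*, r]$) and to every other vertex of $\NTildeOne{G}{A_i}$ (all sharing the point $r$), this shows $x$ misses only $o_i$ inside $N_G[A_i]$, and so
\[
\deg_G(x) = |N_G[A_i]| - 2 + k \quad \text{where } k = |N_G(x) \cap \NTildeTwo{G}{A_i}|.
\]
The tidy condition applied with this $x$ and $M = A_i$ forces $I_x \subseteq [0, r]$ whenever $N_G[x] \subseteq N_G[A_i]$, contradicting $x \in \NTildeOne{G}{A_i}$; hence $k \geq 1$. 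Since $\deg_G(x) \leq \maxdeg(G) = |N_G[A_i]| - 1$, we conclude $k = 1$ and $\deg_G(x) = \maxdeg(G)$.

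For the remaining assertions, $\ell_x = \ell^* > r_{o_i}$ makes $x$ non-adjacent to $o_i$, so $\deg_{G - o_i}(x) = \maxdeg(G)$, while every $a \in A_i$ loses precisely $o_i$ as a neighbor and drops to degree $\maxdeg(G) - 1$; by Lemma~\ref{lem: properties of o} no other maximum-degree vertex is adjacent to $o_i$. Hence $\maxdeg(G - o_i) = \maxdeg(G)$, $\Vmaxdeg(G - o_i) = \Vmaxdeg(G) \setminus A_i$, and the twin class $A_i'$ of $x$ in $G - o_i$ consists of vertices of degree $\maxdeg(G)$. Applying Lemma~\ref{lem:order:of:max:deg:vertices} to $G - o_i$ in the inherited interval representation, $A_i'$ becomes the leftmost class in the resulting linear order and is therefore bordering. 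Finally, $F(G - o_i) = F(G)$ reduces to verifying $N_{G - o_i}[\bulk(G - o_i)] = N_G[\bulk(G)] \setminus \{o_i\}$: the inclusion ``$\subseteq$'' is Part~(ii) of Lemma~\ref{lem: connected-vmaxdeg} applied to $G - o_i$ (which has the same maximum degree as $G$), while ``$\supseteq$'' uses that $x \in \bulk(G - o_i)$ and that $x$ is adjacent to every vertex of $N_G[A_i] \setminus \{o_i\}$, so any bulk-attachment in $G$ that passed through $A_i$ or $o_i$ can be rerouted through $x$ in $G - o_i$.

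The main obstacle I anticipate is the precise verification of the bulk-neighborhood identity in the last step: although the role of $x$ as a replacement max-degree anchor on the left makes the picture intuitive, the formal argument must combine Lemma~\ref{lem:nucleus:and:span} with the uniqueness of $o_i$ to ensure that removing the outsider does not collapse the bulk in any unexpected way.
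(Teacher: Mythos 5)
Your proof is correct in substance but takes a genuinely different route from the paper's. For the existence of $x$, the paper gives a purely combinatorial three-line argument: by Lemma~\ref{lem:tilde:sets:give:clean:clique:sep} the neighborhoods $N_G[c]\cap N_G[A_i]$ for $c\in \NTildeOne{G}{A_i}$ are linearly ordered, every vertex of $N_G[A_i]\setminus O_i$ has a neighbor in $\NTildeOne{G}{A_i}$ (by definition of $O_i$), so some single $c$ covers all of them; since $c\notin\Span_G(A_i)$ it also has a neighbor in $\NTildeTwo{G}{A_i}$, forcing $\deg_G(c)\geq |N_G[A_i]|-1=\maxdeg(G)$. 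Your geometric argument via a tidy representation reaches the same conclusion and has the advantage of pinning down $x$ concretely as the leftmost vertex of $\NTildeOne{G}{A_i}$, which you then exploit for the ``in particular'' claims --- claims the paper's own proof does not address at all. Two places need tightening. First, your opening normalization silently assumes that in the tidy representation no interval extends to the left of $\ell_{A_i}$; this is true but requires an argument (essentially the one the paper gives later in Lemma~\ref{lem: outsider-in-interval-rep}: a vertex $v$ with $\ell_v<\ell_{A_i}$ and empty left side of $A_i$ would satisfy $N_G[v]\subseteq N_G[A_i]$, and tidiness together with Lemma~\ref{lem:nucleus:and:span} Part~(iv) then forces $I_v\subseteq I_{A_i}$), and your characterization of $O_i$ as $\{v\colon I_v\subseteq[0,\ell^*)\}$ depends on it. Second, the closing identity $N_{G-o_i}[\bulk(G-o_i)]=N_G[\bulk(G)]\setminus\{o_i\}$ is asserted via ``rerouting through $x$''; the honest argument is that $N_G[\bulk(G)]$ is the set of vertices whose interval meets $[\,0,r_{y}]$ for the rightmost maximum-degree class $y$ (Lemma~\ref{lem: paths-in-interval-graphs} Part~(iii) with Lemma~\ref{lem:nucleus:and:span} Part~(ii)), the only such vertex missing $[\ell^*,r_y]$ is $o_i$ itself (any other would be a second outsider), and every vertex meeting $[\ell^*,r_y]$ lies in $N_{G-o_i}[\nucl_{G-o_i}(\Vmaxdeg(G-o_i))]$ via an induced $x$--$y$ path. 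Both gaps are fillable, so I regard the proposal as a valid alternative proof that is more explicit than the paper's where the paper is silent, at the cost of more representation-dependent bookkeeping.
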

\begin{proof}
Suppose~$|O_i|=1$. The neighborhoods~$N[x]\cap N[A_i]$ for vertices~$x\in \NTildeOne{G}{A_i}$ are linearly ordered (Lemma~\ref{lem:tilde:sets:give:clean:clique:sep}).
Every vertex of~$N[A_i]\setminus O_i$ is adjacent to a vertex of~$ \NTildeOne{G}{A_i}$. It follows that some vertex~$y \in \NTildeOne{G}{A_i}$ is adjacent to all vertices of~$N[A_i]\setminus O_i$. This vertex~$y$ also has a neighbor in~$\NTildeTwo{G}{A_i}$. Thus the degree of~$y$ is at least the degree of the vertices in~$A_i$, so~$\deg_G(y) = \maxdeg(G)$.
\end{proof}

\begin{lemma} \label{lem: steal vertex from outsider class}
	Let $i \in \{1,2\}$ with $|O_i| \geq 2$. 
	If $o \in O_i$ and $\Vmaxdeg(G) \neq A_i$, then
	\begin{enumerate}[(i)]
		\item $G-o$ has precisely one more flank $F$ than $G$ and $O_i\setminus \{o\} \subseteq F$.
		\item $O(G-o) = O_{\neg i}$ (which might be empty).
	\end{enumerate}
\end{lemma}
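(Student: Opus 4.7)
My proof would work in a tidy interval representation of $G$ chosen so that $A_i$ sits at the left end, with each $a\in A_i$ realizing $[\ell_0,\rho]$, and set $\ell^\star:=\min\{\ell_x : x\in\NTildeOne{G}{A_i}\}$. Lemma~\ref{lem:nucleus:and:span}\eqref{itm:span-is-intuitive-span} then gives $\Span_G(A_i)=\{v:[\ell_v,r_v]\subseteq[\ell_0,\rho]\}$ and $O_i=\{v:r_v<\ell^\star\}$, and the preliminary identity $\Vmaxdeg(G)\cap\Span_G(A_i)=A_i$ holds because any other max-degree vertex inside $\Span_G(A_i)$ would be a false twin of~$A_i$. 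The first concrete step is $\Vmaxdeg(G-o)=\Vmaxdeg(G)\setminus A_i$: each $a\in A_i$ loses the edge to~$o$ (which exists because $o\in\Span_G(A_i)\subseteq N_G[A_i]$ and $A_i$ is a twin class), while by Lemma~\ref{lem: properties of o} no other max-degree vertex is adjacent to~$o$; together with $\Vmaxdeg(G)\neq A_i$ this keeps $\maxdeg(G-o)=\maxdeg(G)$.

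For part~(i), every $v\in\Vmaxdeg(G-o)$ lies in $\NTildeOne{G}{A_i}\cup\NTildeTwo{G}{A_i}$, giving $\ell_v\geq\ell^\star$. In any tidy representation of $G-o$ (e.g.\ obtained from restriction via Lemma~\ref{existence:tidy}, which only shrinks intervals) the left endpoint of $\bulk(G-o)$ remains at least $\ell^\star$, so every $o'\in O_i\setminus\{o\}$ (still with $r_{o'}<\ell^\star$) is disjoint from $N_{G-o}[\bulk(G-o)]$ and is therefore a flank vertex of $G-o$. Lemma~\ref{lem: flanks}\eqref{itm: intuitive-sides} applied with $S=\bulk(G-o)$ identifies the flanks of $G-o$ with the at most two nonempty sides $L_{G-o}(\bulk(G-o))$ and $R_{G-o}(\bulk(G-o))$. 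The left side contains $O_i\setminus\{o\}$ and is \emph{new}: since $A_i\in A(G)$, Lemma~\ref{lem: bordering-means-flank-is-side} gives $L_G(\bulk(G))=\emptyset$. For the right side, $\bulk(G-o)$ still contains the rightmost class of $\Vmaxdeg(G)$ (because $A_i$ is leftmost and $\Vmaxdeg(G)\neq A_i$), so in the restricted $G$-representation its rightmost endpoint is $r^G_{\max}$; hence every $v\in R_{G-o}(\bulk(G-o))$ satisfies $\ell_v>r^G_{\max}$ and lies in $F_G$. Combined with $F_G\subseteq F_{G-o}$ from the containment $N_{G-o}[\bulk(G-o)]\subseteq N_G[\bulk(G)]$ (Lemma~\ref{lem: connected-vmaxdeg}), this gives exactly one extra flank $F\supseteq O_i\setminus\{o\}$.

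For part~(ii), Lemma~\ref{lem:extreal:max:deg:vertices} applied to~$G$ and to~$G-o$ states that the number of twin classes in $A(\cdot)$ plus the number of nonempty flanks equals~$2$ in either graph, so by~(i) the set $A(G-o)$ has one fewer class than $A(G)$. The class $A_i$ is missing from $\Vmaxdeg(G-o)$ and hence from $A(G-o)$, while $A_{\neg i}$ (if it existed in~$G$) is not adjacent to~$o$ by Lemma~\ref{lem: properties of o} and retains its one-sided bordering status in~$G-o$. Counting forces $A(G-o)=A_{\neg i}$. Also $o\notin\Span_G(A_{\neg i})$ (otherwise $o$ would be adjacent to $A_{\neg i}$, again contradicting Lemma~\ref{lem: properties of o}), so $\Span_{G-o}(A_{\neg i})=\Span_G(A_{\neg i})$; and no candidate outsider $o'$ of~$A_{\neg i}$ can be adjacent to~$o$, because otherwise the $A_i$-neighbor of $o'$ would lie in $\Span_G(A_{\neg i})$ and thereby be a false twin of~$A_{\neg i}$, contradicting $A_i\cap A_{\neg i}=\emptyset$. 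Therefore $O_{\neg i}(G-o)=O_{\neg i}(G)$ and $O(G-o)=O_{\neg i}$.

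The principal technical hurdle is the right-flank analysis in~(i): one must rule out that a vertex not in $F_G$ becomes a new right-flank vertex of $G-o$. This is resolved by the observation that the rightmost class of $\Vmaxdeg(G)$ survives into $\Vmaxdeg(G-o)$, so $\bulk(G-o)$ still extends to $r^G_{\max}$; everything else is bookkeeping around the definitions of $\nucl$, $\Span$, and $\approx$.
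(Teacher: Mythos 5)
Your part~(i) is essentially the paper's argument (restrict a tidy representation, re-tidy it by shrinking, observe that $\bulk(G-o)$ keeps its left end at or beyond $\ell^\star$ while the rightmost maximum-degree class of $G$ survives into $\Vmaxdeg(G-o)$, so only a left flank can appear), and it is correct. The problem is in part~(ii).

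The step ``$o\notin\Span_G(A_{\neg i})$, so $\Span_{G-o}(A_{\neg i})=\Span_G(A_{\neg i})$'' is not valid, and the asserted equality is false in general. Deleting $o$ leaves $N[A_{\neg i}]$ unchanged but shrinks the closed neighborhoods of the neighbors of $o$, so the span of $A_{\neg i}$ can only \emph{grow}: every vertex $x$ with $x\in N_G[A_{\neg i}]$, $o\in N_G(x)$ and $N_G[x]\setminus\{o\}\subseteq N_G[A_{\neg i}]$ lands in $\Span_{G-o}(A_{\neg i})\setminus\Span_G(A_{\neg i})$. This happens: take the interval graph with $a_1=[0,10]$, $a_2=[8,20]$, $o=[0,1]$, $o'=[0,0.5]$, $x=[1,9]$, $y=[15,20]$, $z=[20,25]$. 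Here $\Vmaxdeg(G)=\{a_1,a_2\}$, $A_1=\{a_1\}$, $A_2=\{a_2\}$, $O_1=\{o,o'\}$, and $x\in\Span_{G-o}(A_2)\setminus\Span_G(A_2)$, since the only neighbor of $x$ outside $N_G[a_2]$ is $o$. Your exclusion of new outsiders only treats candidates \emph{adjacent to}~$o$; but a non-neighbor $w$ of $o$ is an outsider of $G-o$ as soon as $N_G[w]\subseteq\Span_{G-o}(A_{\neg i})$, with the possibly enlarged span on the right-hand side, so the inclusion $O(G-o)\subseteq O_{\neg i}$ is not established. The paper closes exactly this hole with a common-neighbor argument: if $w$ is an outsider of $A_{\neg i}$ in $G-o$ but not in $G$, then $w$ and $o$ must have a common neighbor $x$ (otherwise nothing relevant to $w$ changed); since $x\in N_G(o)\subseteq\Span_G(A_i)$ we get $N_G[x]\subseteq N_G[A_i]$ and hence $w\in N_G[A_i]$; but then $A_i\subseteq N_{G-o}[w]\subseteq\Span_{G-o}(A_{\neg i})$, which is impossible because each $a\in A_i$ is still adjacent to the nonempty set $O_i\setminus\{o\}$, whose vertices do not lie in $N_G[A_{\neg i}]$. (In the example above no new outsider arises for precisely this reason: the new span element $x$ is adjacent to $a_1$.) A minor further point: invoking Lemma~\ref{lem:extreal:max:deg:vertices} for $G-o$ needs a remark when $G$ already has a flank, since then $G-o$ has two flanks and falls outside that lemma's standing assumptions; in that case one should note directly that $A(G-o)=\emptyset=A_{\neg i}$, so $O(G-o)=\emptyset=O_{\neg i}$.
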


\begin{proof}	
	Let $o' \in O_i \setminus \{o\}$.
	Note that~$G_o$ is connected. Also note that~$\Vmaxdeg(G_o) = \Vmaxdeg(G)$ since~$o\notin N_G[A_{\neg i}]$. By Lemma~\ref{lem: properties of o}, we have $N_G(o') \cap \Vmaxdeg(G) = A_i$ and, hence, $N_{G-o}(o') \cap \Vmaxdeg(G-o) = \emptyset$
	
	Consider a tidy interval representation of~$G$. It induces an interval representation of~$G-o$. By Lemma~\ref{existence:tidy}, after (possibly) shrinking the intervals, we obtain a tidy interval representation of~$G-o$. Using Lemma~\ref{lem:nucleus:and:span} Part~\eqref{itm:span-is-intuitive-span}, we conclude that
	$O_i \setminus \{o\}$ is contained in a flank of~$G_o$. Since $\Span_G(A_{\neg i}) \subseteq \Span_{G-o}(A_{\neg i})$, we also obtain that every vertex~$v$ in $O_{\neg i}$ satisfies~$N_{G-o}[v] \subseteq \Span_{G-o}(A_{\neg i})$ so vertices in~$O_{\neg i}$ are outsiders of $G-o$.

	It remains to show that in~$G-o$ there are no further outsiders in the same class as~$O_{\neg i}$.

	Suppose~$w$ would be such an outsider in~$G-o$. 
	Then~$w$ is not adjacent to~$A_i$ since~$A_i$ is adjacent to~$O_i\setminus \{o\}$.
	It would also have to be the case that~$w$ and~$o$ have a common neighbor, since otherwise~$w$ was already an outsider in~$G$.
	Together this implies that~$o$ is not an outsider in~$G$, which gives a contradiction.
\end{proof}

\begin{remark}
	Since we assume that no vertex of $G$ is universal, the condition $\Vmaxdeg(G) \neq A_i$ is always satisfied in case $F_G = \emptyset$.
\end{remark}

\begin{lemma}\label{lem: outsider-in-interval-rep}
	Let $\mathcal{I} = \{[\ell_v, r_v]\colon v \in V(G)\}$ be a tidy interval representation of $G$.
	Assume that $A_1 \neq \emptyset$.
	If additionally $A_2 \neq \emptyset$, then we further assume that
	$\ell_{A_1} < \ell_{A_2}$ (and, hence, also $r_{A_1} < r_{A_2}$).
	It holds that
	\begin{enumerate}[(i)]
		\item \label{itm: a2nonempty}  either $A_2 \neq \emptyset$ and
		\begin{align*}
			O_1 &= \{v \in V(G)\colon r_v < \min\{\ell_u\colon u \in N_G(\Span(A_1)) \}  \}~\text{and}\\
			O_{2} &= 
				\{v \in V(G) \colon \ell_v > \max\{r_u\colon u \in N_G(\Span(A_{2} )) \}  \},
		\end{align*}
		\item or $A_2 = O_2 = \emptyset$ and \label{itm: a2empty}
		\begin{align*}
			O_1 &=  \{v \in V(G)\colon r_v < \min\{\ell_u\colon u \in N_G(\Span(A_1)) \} ~\text{or}\\
			O_1 &=  	\{v \in V(G) \colon \ell_v > \max\{r_u\colon u \in N_G(\Span(A_{1} )) \}  \}.
		\end{align*}
	
	\end{enumerate}
	
\end{lemma}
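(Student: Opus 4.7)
The plan is to leverage the tidy-representation characterization $\Span_G(A_i) = \{v : [\ell_v, r_v] \subseteq [\ell_{A_i}, r_{A_i}]\}$ from Lemma~\ref{lem:nucleus:and:span}\eqref{itm:span-is-intuitive-span}, together with the one-side property of the classes of $A$. As a preliminary observation, in case~\eqref{itm: a2nonempty} the orientation assumption $\ell_{A_1} < \ell_{A_2}$ forces the single side of $A_1$ to lie to the right (containing $A_2$) and the single side of $A_2$ to the left; consequently $L_G(A_1) = R_G(A_2) = \emptyset$, meaning no vertex of $G$ has $r_v < \ell_{A_1}$ or $\ell_v > r_{A_2}$ in the tidy representation.

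The core structural step is the claim that every $u \in N_G(\Span(A_1))$ satisfies $\ell_u \geq \ell_{A_1}$ and $r_u > r_{A_1}$. The second inequality follows from the first combined with $u \notin \Span(A_1)$ and the $\Span$-characterization. For the first, I would apply the tidyness condition to $u$ with the witness set $M \coloneqq \{w \in V(G) : \ell_w \geq \ell_{A_1}\}$: every neighbor $w$ of $u$ either lies in $M$, or has $\ell_w < \ell_{A_1}$, in which case $L_G(A_1) = \emptyset$ forces $r_w \geq \ell_{A_1}$ and hence $w$ is adjacent to every vertex of $A_1 \subseteq M$; moreover $u$ is itself adjacent to $A_1 \subseteq M$. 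Thus $N_G[u] \subseteq N_G[M]$, and tidyness yields $[\ell_u, r_u] \subseteq [\ell_{A_1}, \max_{m \in M} r_m]$, so $\ell_u \geq \ell_{A_1}$.

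The desired identity $O_1 = \{v : r_v < \min\{\ell_u : u \in N_G(\Span(A_1))\}\}$ then follows by checking both inclusions; note that $N_G(\Span(A_1))$ is nonempty because $G$ is connected and $\Span(A_1) \neq V(G)$ (otherwise an $A_1$-vertex would be universal). Forward: if $v \in O_1$, then $[\ell_v, r_v] \subseteq [\ell_{A_1}, r_{A_1}]$ and $v$ is non-adjacent to every $u \in N_G(\Span(A_1))$; combined with $r_u > r_{A_1} \geq r_v$, disjointness of intervals forces $r_v < \ell_u$. Reverse: assume $r_v < \min\{\ell_u : u \in N_G(\Span(A_1))\}$. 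I first check $v \in \Span(A_1)$: otherwise either $v \in N_G(\Span(A_1))$ itself, giving $\min\{\ell_u\} \leq \ell_v \leq r_v$ against the assumption, or $v \notin N_G[\Span(A_1)]$, in which case $v$ is non-adjacent to every vertex of the clique $A_1$ so its interval misses $[\ell_{A_1}, r_{A_1}]$; combined with $L_G(A_1) = \emptyset$ this forces $\ell_v > r_{A_1}$, but then $r_v > r_{A_1} \geq \ell_u$ for any $u \in N_G(\Span(A_1))$ whose interval meets $\Span(A_1) \subseteq [\ell_{A_1}, r_{A_1}]$, again a contradiction. Any $w \in N_G(v) \setminus \Span(A_1)$ would lie in $N_G(\Span(A_1))$ and satisfy $\ell_w > r_v$, contradicting $v \sim w$; hence $N_G[v] \subseteq \Span(A_1)$, i.e., $v \in O_1$. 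The identity for $O_2$ follows by the mirror-image argument using $R_G(A_2) = \emptyset$.

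For case~\eqref{itm: a2empty}, $A_2 = \emptyset$ leaves open whether the single side of $A_1$ lies to the right or to the left; in the former the argument above yields the first identity, while in the latter the symmetric argument gives the second (the ``$A_2$'' in the statement being an apparent typo for ``$A_1$'', since $A_2$ is empty). The main obstacle of the entire proof is the structural claim $\ell_u \geq \ell_{A_1}$ for $u \in N_G(\Span(A_1))$: it relies critically on the one-side property (to ensure $L_G(A_1) = \emptyset$) and on identifying the witness set $M = \{w : \ell_w \geq \ell_{A_1}\}$ for tidyness; without this, $N_G(\Span(A_1))$ could a priori contain vertices extending arbitrarily far to the left of $A_1$, and the displayed identities would fail.
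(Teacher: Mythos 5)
Your proof is correct and follows essentially the same route as the paper's: both use the tidy-representation characterization of $\Span(A_1)$ together with the one-side property of $A_1$ to show that every vertex of $N_G(\Span(A_1))$ starts at or after $\ell_{A_1}$ and hence ends strictly to the right of $r_{A_1}$, and then verify the two inclusions exactly as you do. The only difference is one of detail: the paper asserts the global fact $\ell_{A_1}=\ell_{V(G)}$ directly from tidiness (via a vertex $z\in N_G[A_2]\setminus N_G[A_1]$ witnessing that the unique side of $A_1$ is the right one), whereas you derive the needed inequality only for vertices of $N_G(\Span(A_1))$ through the explicit witness set $M=\{w\colon \ell_w\geq\ell_{A_1}\}$ — the same idea, spelled out more carefully.
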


\begin{proof}
	Towards a proof of Part~\eqref{itm: a2nonempty} assume that $A_2 \neq \emptyset$. 
	The set $A_1$ has only one side (by definition) and there exists a vertex $z \in N_G[A_2]\setminus N_G[A_1]$, with $\ell_z > r_{A_1}$. It follows that $\ell_{A_1} = \ell_{V(G)}$ is empty since $\mathcal{I}$ is tidy.
	First, we prove that $\{v \in V(G) \colon r_v < \min\{l_u\colon u \in N_G(\Span(A_1)) \}  \} \subseteq O_1$.
	To this end let $v \in V(G)$ with $r_v < \min \{\ell_u \colon u \in N_G(\Span(A_1)) \}$.
	Note that $r_v < \min \{\ell_u \colon u \in N_G(\Span(A_1)) \} <r_{A_1}$. Hence $v \in N_G[A_1]\setminus N_G(\Span(A_1))$, which is the defining property for being in~$O_1$.
	
	Next, we show that $O_1 \subseteq \{v \in V(G)\colon r_v < \min\{\ell_u\colon u \in N_G(\Span(A_1)) \}  \}$.
	Let $o_1 \in O_1$. 
	Since $N_G[o_1] \subseteq N_G[A_1]$ and $\mathcal{I}$ is tidy, we know that $[\ell_{o_1}, r_{o_1} ] \subseteq [\ell_{A_1}, r_{A_1}]$.
	By the definition of outsiders, $o_1$ is not adjacent to~the vertices of $N_G(\Span(A_1))$. Since $\ell_{A_1} = \ell_{V(G)}$,
	every vertex~$n$ in $N_G(\Span(A_1))$ satisfies $r_n > r_{A_1}$. 
	 This implies $o_1 \in \{v\colon r_v < \min\{\ell_u\colon u \in N_G(\Span(A_1)) \}  \}$.
	
	Altogether, we have shown the first equality of Part~\eqref{itm: a2nonempty}.
	The second equality of Part~\eqref{itm: a2nonempty} follows by symmetry (and interchanging the assumption for $A_1$ and $A_2$).
	
	The proof of Part~\eqref{itm: a2empty} is similar to the proof of Part~\eqref{itm: a2nonempty} with the only difference that we do not know which of the sides of $A_1$ (\say{left} or \say{right}) is the empty side. Depending on that, we obtain the first or the second equality of the statement.
\end{proof}

Our next goal is to reconstruct the multiset of cards that correspond to vertices in~$O_1\cup O_2$. For this let~$Q^{\geq 2}$ be the set of vertices of~$G$ for which
\begin{enumerate}
	\item there is a twin equivalence class $V'$ of $\Vmaxdeg(G)$ with
	$N_G[v] \subseteq \Span(V')$, and 
	\item $G_v$ has more flanks than~$G$ or~$\maxdeg(G_v)< \maxdeg(G)$.\label{prop:more:flanks}
\end{enumerate}

\begin{lemma}\label{lemma:find:Q2}
$v\in Q^{\geq 2}$ if and only if for some~$i\in \{1,2\}$ we have~$v\in O_i$ and~$|O_i|\geq 2$. Moreover~$\leftmset G_v \colon v\in Q^{\geq 2} \rightmset$ is reconstructible.
\end{lemma}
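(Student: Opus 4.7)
The plan is to prove the biconditional characterization of $Q^{\geq 2}$ and then to derive the reconstructibility of the associated card multiset. For the backward direction, suppose $v\in O_i$ with $|O_i|\geq 2$. Condition~(1) is immediate: by the definition of outsider, $N_G[v]\subseteq \Span_G(A_i)$, and by Lemma~\ref{lem:extreal:max:deg:vertices} the class $A_i$ is a twin equivalence class of $\Vmaxdeg(G)$, so $V'=A_i$ witnesses~(1). For condition~(2), I split according to whether $\Vmaxdeg(G)=A_i$. If $\Vmaxdeg(G)\neq A_i$, then Lemma~\ref{lem: steal vertex from outsider class} directly produces an additional flank in $G_v$. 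In the remaining edge case $\Vmaxdeg(G)=A_i$---which, by Lemma~\ref{lem:extreal:max:deg:vertices}, forces $|F_G|=1$---Lemma~\ref{lem: properties of o} implies that $v$ is adjacent to every vertex of the twin class $A_i=\Vmaxdeg(G)$, so each maximum-degree vertex loses a neighbor and $\maxdeg(G_v)<\maxdeg(G)$.

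For the forward direction, let $v\in Q^{\geq 2}$ with witnessing twin class $V'$ for condition~(1). The crucial step is to show that $V'\in \{A_1,A_2\}$. Suppose towards a contradiction that $V'\notin A$, so both sides of $V'$ are non-empty. Fix a tidy interval representation. By Lemma~\ref{lem:nucleus:and:span}(iv), every vertex in $\Span_G(V')$---and hence every vertex in $N_G[v]$---has its interval inside $[\ell_{V'},r_{V'}]$, while maximum-degree vertices belonging to twin classes other than $V'$ appear on both sides of this interval. These external maximum-degree vertices are not adjacent to $v$, so they preserve their degree and $\maxdeg(G_v)=\maxdeg(G)$; they also still bracket $V'$ from both sides in $G-v$, so by Lemma~\ref{lem: connected-vmaxdeg}(ii) the closed bulk-neighborhood satisfies $N_{G-v}[\bulk(G-v)]=N_G[\bulk(G)]$, ruling out a new flank and contradicting~(2). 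Hence $V'=A_i$ for some $i$ and, by definition of outsider, $v\in O_i$. Finally, if $|O_i|=1$, Lemma~\ref{lem:one:flank:means:max:deg:rivet} yields $F(G_v)=F(G)$ and $\maxdeg(G_v)=\maxdeg(G)$, contradicting~(2); hence $|O_i|\geq 2$.

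For the reconstructibility of $\leftmset G_v\colon v\in Q^{\geq 2}\rightmset$, I note that $\maxdeg(G)$ is reconstructible by Lemma~\ref{lem: reconstruct-degree-sequence} and the multiset of flank sizes is reconstructible by Lemma~\ref{lem:sizes}, so condition~(2) is testable on each card. To pick out the $Q^{\geq 2}$ cards among the condition-(2) cards, I would combine three further deck-observables stemming from the outsider characterization: first, $G_v$ is connected, since removing an outsider preserves connectivity via $A_i$ acting as a hub inside the connected subgraph $\Span_G(A_i)$, and by Lemma~\ref{lem: outsider-in-interval-rep} no vertex of $N_G(\Span_G(A_i))$ has $v$ as its sole neighbor in $\Span_G(A_i)$; second, $\deg_G(v)<\maxdeg(G)$, read off from the edge-count difference between $G$ and $G_v$; and third, the strongly reconstructible multiset $\leftmset \deg_G(u)\colon u\in N_G(v)\rightmset$ from Lemma~\ref{lem: reconstruct-degree-sequence} contains exactly $|A_i|$ copies of $\maxdeg(G)$ for some $i$, where the sizes of the classes $A_1,A_2$ are themselves reconstructible via the structure of $G[\Vmaxdeg(G)]$ (visible in any card deleting a non-maximum-degree vertex) together with the reconstructible flank count and Lemma~\ref{lem:extreal:max:deg:vertices}.

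The main obstacle is the claim in the forward direction that $V'\notin A$ is incompatible with condition~(2): controlling the effect on $\nucl$, $\bulk$, and the flanks of deleting a vertex that lies deep inside the bulk is delicate, since such a deletion could in principle destroy shortcut induced paths between maximum-degree vertices and thereby shrink the nucleus, and one must argue carefully that the abundance of maximum-degree vertices on both sides of $V'$ prevents this.
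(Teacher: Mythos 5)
Your backward direction and the overall two-part structure match the paper's, but there are two genuine gaps, one in each remaining part.

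In the forward direction, your contradiction argument assumes that $V'\notin A$ forces maximum-degree vertices on \emph{both} sides of $[\ell_{V'},r_{V'}]$. This fails in the one-flank case: the bordering class $V'$ whose second side is the non-empty flank $F_2$ has two non-empty sides (so $V'\notin A$), yet all other maximum-degree classes lie on a single side of it. For such a $V'$ your intermediate claim $N_{G-v}[\bulk(G-v)]=N_G[\bulk(G)]$ is actually false: deleting $v$ demotes $V'$ from maximum degree, the bulk retracts on the flank side and $F_2$ grows. The number of flanks is nevertheless preserved (the enlarged flank is still one equivalence class and the other end is governed by $A_1$, which is untouched), but this needs the paper's argument via the inherited linear order of maximum-degree classes rather than preservation of $N[\bulk]$. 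Note also that Lemma~\ref{lem: connected-vmaxdeg}(ii) only yields the inclusion $N_{G-v}[\bulk(G-v)]\subseteq N_G[\bulk(G)]$, not the equality you invoke.

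The more serious gap is in reconstructibility: your deck-observable tests do not characterize $Q^{\geq 2}$. Take a vertex $v\in\Span(A_i)\setminus O_i$ adjacent to all of $A_i$, to no other maximum-degree vertex, but also to some vertex of $N_G(\Span(A_i))$; then $v$ fails condition (1), so $v\notin Q^{\geq 2}$. Yet deleting $v$ also demotes $A_i$, the untouched set $O_i$ becomes a new flank of $G-v$, and $v$ passes every test you list: $G_v$ is connected, $\deg_G(v)<\maxdeg(G)$, $v$ has exactly $|A_i|$ maximum-degree neighbours, and $G_v$ has more flanks than $G$. (Counting $|A_i|$ copies of $\maxdeg(G)$ among the neighbour degrees also does not certify that these neighbours form the class $A_i$ rather than some other set of that cardinality.) This is precisely why the paper, after assembling the analogous candidate set $W$, adds a final filtering step: it identifies the extremal degree-$(\geq\maxdeg(G)-1)$ classes $D_1,D_2$, computes for each candidate card the size of the set $X_w$ of potential outsiders of $G-w$, and keeps only the cards minimizing $|X_w|$; genuine outsiders achieve $|X_w|=|X|-1$ while spurious candidates such as the $v$ above give $|X_w|\geq|X|$. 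Without some such discriminating statistic your reconstructed multiset would be over-counted, so the core of the paper's argument is missing from the proposal.
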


\begin{proof}
	(\emph{$\Rightarrow$})
	Let~$v \in Q^{\geq 2}$.
	Let~$V'$ be an equivalence class of~$\Vmaxdeg(G)$ with~$N_G[v] \subseteq \Span(V')$.
	Observe that $(\Vmaxdeg(G)\setminus V') \cap \Span(V') = \emptyset $.
	In particular, if $\maxdeg(G_v) < \maxdeg(G)$, then $V' = \Vmaxdeg(G)$ and $v$ is an outsider.
	
	Otherwise $G_v$ has more flanks than $G$ and $\maxdeg(G) = \maxdeg(G_v)$.
	Note that~$G-v$ is connected and $\Vmaxdeg(G)\setminus V'	= \Vmaxdeg(G-v)$.
	In particular, the graph $G-v$ inherits the linear order of its maximum degree vertices as well as twin equivalence classes and bordering vertices from~$G$.
	In particular, if~$V' \notin \{A_1, A_2\}$, then 
	the number of flanks in~$G$ and~$G_v$ are the same, which contradicts	 the second condition of $Q^{\geq 2}$.
	It follows that~$V'\in \{A_1,A_2\}$ and thus~$v \in O(G)$.
	
	Towards a contradiction suppose that~$|O_i|= \{v\}$.
	By Lemma~\ref{lem:one:flank:means:max:deg:rivet},
	there are not more flanks in $G_v$ than in~$G$ and~$\maxdeg(G_v)= \maxdeg(G)$, which is a contradiction.
	
	\medskip
	
\noindent	(\emph{$\Leftarrow$})  Let~$v\in O_i$ be an outsider and~$|O_i|\geq 2$.
	Observe first that $A_i$ is the twin equivalence class satisfying the first condition of the definition of $Q^{\geq 2}$.
	If $A_i = \Vmaxdeg(G)$, then $\maxdeg(G_v) < \maxdeg(G)$ and, hence, $v \in Q^{\geq 2}$.
	Otherwise~$G_v$ has more flanks than~$G$ by Lemma~\ref{lem: steal vertex from outsider class}.
	
	\medskip
	
	\noindent(\emph{$Q^{\geq 2}$ is reconstructible}.)

	Consider the set~$W'$ of vertices~$w \in V(G)\setminus \Vmaxdeg(G)$ which satisfy that  $N_G(w)\cap \Vmaxdeg(G)$ is a twin equivalence class of $G$, $G_w$ is connected, and $G_w$ has more flanks than~$G$ or~$\maxdeg(G_w)< \maxdeg(G)$. 
	
    Note that the case that~$\maxdeg(G_w)< \maxdeg(G)$ can only happen if~$G$ only has a single twin equivalence class of vertices of maximum degree. If this is not the case then we set~$W\coloneqq W'$.
    Otherwise, we set~$W$ to be the set of those vertices among~$w\in W'$ for which~$G-w$ has the largest number of twin equivalence classes of degree~$\maxdeg-1$.

	 \begin{claim} 
	  If~$w\in W$, then~$N_G(w)\cap \Vmaxdeg(G) \in \{A_1, A_2\}$.
	 \end{claim}
	 	 \noindent $\ulcorner$ 
	 Suppose towards a contradiction that $A_i \cap N_G(w) = \emptyset$ for $i \in \{1,2\}$. This implies that $\maxdeg(G_v) = \maxdeg(G)$, the bordering vertices of $G-v$ are precisely the bordering vertices of $G$ (since the linear order of the maximum vertices in $G-v$ is inherited from a respective order in $G$, see Lemma~\ref{lem:order:of:max:deg:vertices}), and $F_{G-v} = F_G$, which contradicts $w \in W$.
	 We obtain that $N_G(w) \cap A_i \neq \emptyset$ for some $i \in \{1,2\}$.
	 Since $N_G(w) \cap \Vmaxdeg(G)$ is a twin equivalence class of $G$, we obtain $A_i = N_G(w) \cap \Vmaxdeg(G)$.
	 \\\mbox{}\hfill~$\lrcorner$

	\medskip
	
	By Lemma~\ref{lem:order:of:max:deg:vertices}, there is a unique order (up to twins and reflection) of the non-neighborhood-contained vertices of $G$.
	Let~$D_1$ and $D_2$ be the extremal twin equivalence classes of vertices of degree at least $\maxdeg(G)-1$ in this order.

	Note that, for $i \in \{1,2\}$, we have~$A_i \in \{D_1,D_2\}$ if~$A_i$ is non-empty. After renaming, we can assume that~$A_i= D_i$ if~$A_i$ is not empty.

Similarly for $w \in W$ consider the order of vertices in~$G-w$ that are not neighborhood-contained by a vertex of larger degree.
Let~$D^w_1$  and $D^w_2$ be the first and last twin equivalence class of vertices of degree at least $\maxdeg(G)-1$ in this order in~$G-w$.

	 \begin{claim} 
	 Let $w$ be a vertex of $W$.
	 If~$w\in N_G[A_i]$, then for some $j \in \{1,2\}$ we have $A_i\subseteq D_j^w$ and~$D_{\neg i}\subseteq D_{\neg j}^w$. 
	 \end{claim}
	 \noindent $\ulcorner$
	 Note first that if~$A_i= D_{\neg i}$ then~$A_1=D_1=D_2=A_2$ and the claim holds. We can thus assume~$A_i\neq  D_{\neg i}$. 
	 
	 Let $w \in N_G[A_i]$. In particular $A_i \neq \emptyset$ and, hence,  $A_i = D_i$ and~$w\in N_G[D_i]$.
	 We argue that~$w\notin N_G[D_{\neg i}]$.
	 If~$A_{\neg i}\neq \emptyset$ this follows from the definition of~$W$ since~$w$ is only adjacent to one twin equivalence class of $\maxdeg(G)$-vertices. Otherwise, if~$w\in N_G[D_{\neg i}]$, then there is only one twin equivalence class of maximum degree by the definition of $W'$.
	  In particular,~$\maxdeg(G_w)<\maxdeg(G)$. In this case however, since~$A_1\neq D_2$, there are some vertices in~$y\in W$ so that~$G-y$ has at least two twin equivalence classes of degree~$\maxdeg(G)-1$, namely the vertices in~$O_i$. It follows that~$w$ is not adjacent to every vertex of degree at least~$\maxdeg(G)-1$. 
	 
	 So far, we have shown~$w\in N_G[D_i]\setminus N_G[D_{\neg i}]$ and $A_i = D_i$. We conclude that vertices of~$D_1\cup D_2$ have degree at least~$\maxdeg(G)-1$ in~$G-w$.
	 Since~$G-w$ is an induced subgraph of~$G$ it follows that the unique order of non-neighborhood-contained vertices is inhered by the vertices in~$G-w$. It follows that vertices~$D_1$ and~$D_2$ are vertices in the first and last equivalence classes of vertices of maximum degree at least~$\maxdeg(G)-1$.
	 \hfill~$\lrcorner$

	\medskip

	For $w \in W$ let
	$X_w \coloneqq \{x \in V(G-w)\colon N_{G-w}[x] \subseteq \Span_{G-w}(D_j)~\text{for some $j \in \{1,2\}$}\}$.
	Note that $|X_w|$ can be reconstructed from~$G_w$.
	We claim that the cards~$G_v$ with~$v\in  Q^{\geq 2}$ are exactly the cards with~$v\in W$ for which~$|X_v|$ is as small as possible.
	For this it suffices to observe that if we
	set~$X\coloneqq \{x \in V(G)\setminus \Vmaxdeg(G)\colon N_G[x] \subseteq \Span(D_i)$\text{~for some~}$i \in \{1,2\} \}$, then 
	 $x\in X\setminus \{w\}$ implies~$x\in X_w$  and 
		if~$w\in O_i$, then~$A_i\in \{D_1,D_2\}$ and~$X_w = X\setminus \{w\}$.
\end{proof}

\begin{lemma}\label{lem:Q:sizes}
Suppose~$O_1$ and~$O_2$ are the (possibly empty) sets of outsiders as defined above.
 Then the multiset~$\leftmset |O_1|,|O_2|\rightmset$ is reconstructible.
\end{lemma}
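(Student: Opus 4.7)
The plan is to leverage Lemma~\ref{lemma:find:Q2}, which already supplies the reconstructible multiset $\mathcal{Q} \coloneqq \leftmset G_v : v \in Q^{\geq 2}\rightmset$, together with its cardinality $N \coloneqq |\mathcal{Q}|$. From the characterisation of $Q^{\geq 2}$ in Lemma~\ref{lemma:find:Q2}, we have $N = \sum_{i \in \{1,2\},\, |O_i| \geq 2} |O_i|$. I would first determine the number $f \in \{0,1\}$ of non-empty flanks via Lemma~\ref{lem:sizes}, and combine Lemmas~\ref{lem:extreal:max:deg:vertices} and~\ref{lem:anonempty-ononempty} to conclude that exactly $2-f$ of the outsider classes $O_1, O_2$ are non-empty.

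The easy subcases follow immediately from $f$ and $N$. If $f = 1$ there is a unique non-empty $O_{i_0}$; then $|O_{i_0}| = N$ when $N \geq 2$, and $|O_{i_0}| = 1$ when $N = 0$ (the value $N = 1$ cannot occur). If instead $f = 0$ with $N = 0$ then both outsider classes are singletons. In the remaining case $f = 0$ and $N \geq 2$, at least one of $|O_1|, |O_2|$ is at least $2$, and the challenge is to determine how $N$ splits between them.

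For this splitting I would extract, from each card $G_v \in \mathcal{Q}$, the quantity $|O(G - v)|$ by applying the combinatorial definitions of Section~\ref{sec:outsiders} directly to $G - v$. By Lemma~\ref{lem: steal vertex from outsider class} Part~(ii), if $v \in O_i$ then $O(G - v) = O_{\neg i}$, so $|O(G - v)| = |O_{\neg i}|$. Hence the reconstructible multiset $M \coloneqq \leftmset |O(G - v)| : G_v \in \mathcal{Q} \rightmset$ contains $|O_1|$ copies of $|O_2|$ together with $|O_2|$ copies of $|O_1|$, where an outsider class of size $1$ contributes nothing. If $1 \in M$ then exactly one of $|O_1|, |O_2|$ equals $1$ and the other equals $N$, giving $\leftmset 1, N \rightmset$. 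Otherwise both sizes are at least $2$ and the pair $\{|O_1|, |O_2|\}$ is read off from the (at most two) distinct values appearing in $M$ and their multiplicities, with the constant case corresponding to $|O_1| = |O_2| = N/2$.

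The main obstacle I anticipate is justifying that $|O(G - v)|$ is genuinely computable from the card $G_v$ alone, which requires verifying that $G - v$ fits the standing assumptions of Section~\ref{sec:outsiders}. Connectedness of $G - v$ is built into the definition of $Q^{\geq 2}$ and the flank count of $G - v$ is controlled by Lemma~\ref{lem: steal vertex from outsider class}. The delicate point is the potential appearance of a universal vertex in $G - v$: such a vertex would correspond to a very restrictive interval-geometric configuration around the removed outsider $v$, and this edge case may need to be isolated and handled by a short dedicated argument before the outsider construction can be applied uniformly to all cards in $\mathcal{Q}$.
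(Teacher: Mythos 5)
Your proposal is correct and follows essentially the same route as the paper: both determine the flank count to settle the cases where $Q^{\geq 2}$ is empty or only one outsider class is non-empty, and in the remaining flankless case both read off $|O(G-v)| = |O_{\neg i}|$ from the cards in $Q^{\geq 2}$ via Lemma~\ref{lem: steal vertex from outsider class} to split $|Q^{\geq 2}|$ between the two classes. The universal-vertex worry you flag at the end does not materialize (a vertex of $G-v$ whose sole non-neighbor in $G$ is $v\in O_i$ would have to lie in $A_i$ and hence be adjacent to $v$, a contradiction), and the paper, like you, treats $|O(G-v)|$ as directly computable from the card.
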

\begin{proof}By Lemma~\ref{lemma:find:Q2}, we can reconstruct whether~$Q^{\geq 2}$ is empty and by Lemma~\ref{lem:sizes}, we can reconstruct~$\leftmset |F_1|, |F_2| \rightmset$.
Since we assume that $G$ has at most one flank the set~$Q^{\geq 2}$ is empty if and only if $\leftmset |O_1|,|O_2|\rightmset= \leftmset 1,1\rightmset$ or $\leftmset |O_1|,|O_2|\rightmset= \leftmset 0,1\rightmset$. Which of these cases can occur depends on~$|F_G|$, so can be reconstructed.
We can thus assume $Q^{\geq 2} \neq \emptyset$.
If~$G$ has one flank, then~$\leftmset |O_1|,|O_2|\rightmset = \leftmset 0,\max\{1,|Q^{\geq 2}\}|\rightmset$. We can thus assume~$F_G = \emptyset$, i.e., $O_1$ and $O_2$ are non-empty.  

Let~$v\in Q^{\geq 2}$ and say, $v \in O_i$.
Observe that $G-v$ has precisely one flank (containing the vertices of $O_1\setminus \{v\}$) and $O_{\neg i}$ is the set of outsiders of $G-v$ (Lemma~\ref{lem: steal vertex from outsider class}).
Either all cards $G_v$ with $v \in Q^{\geq 2}$ have precisely one outsider and, hence, $\leftmset |O_1|, |O_2| \rightmset = \leftmset 1, |Q^{\geq 2}\}| \rightmset$.
Or we have $Q^{\geq 2} = O_1 \cup O_2$.
If all of the outsider class sizes of cards $G_v$ with $v \in Q^{\geq 2}$ are equal to the same number,~$c$ say, then $\leftmset |O_1|,|O_2|\rightmset= \leftmset c,c\rightmset$. Otherwise, exactly two numbers~$c_1,c_2$ can be observed as outsider class sizes, and~$\leftmset |O_1|,|O_2|\rightmset= \leftmset c_1,c_2\rightmset$.
\end{proof}

In the following, we argue that we can also identify the cards~$G_v$ for which~$v\in O_i$ in case~$|O_i|=1$.

For an interval graph~$H$ of maximum degree~$\maxdeg$, let~$A^\infty(H)$ be the set of vertices~$x$ for which there is a sequence of vertices~$v_1,\ldots,v_t$ with~$v_t=x$,~$v_1\in A(H)$ (so~$v_1$ is a maximum degree vertex with at most one side),~$v_i\in \Vmaxdeg(H)$ and, in case~$t>1$, for each~$i\in \{2,\ldots,t\}$ we have~$|N[v_i] \setminus N[v_{i+1}]|=1$.
Call two elements~$a,a'$ of~$A^\infty(H)$ equivalent 
if there is a sequence~$v_1,\ldots,v_t$ with~$v_1\in A(H)$,~$a=v_i$ for some~$i\in \{1,\ldots,t\}$,~$a'=v_t$, ~$v_i\in \Vmaxdeg(H)$, and~$|N[v_i] \setminus N[v_{i+1}]|\leq 1$ for each~$i\in \{2,\ldots,t\}$.
Note that~$A^\infty(H)$ consists of at most two equivalence classes.
We let~$A^\infty(H)_i$ be the equivalence class of~$A^\infty(H)$ that contains~$A_i(H)$.
(It is possible that~$A^\infty(H)_1= A^\infty(H)_2$.)

If~$|O_1|\geq 2$ and~$|O_2|\geq 2$, then define~$Q^1$ to be the empty set. Otherwise let~$Q^1$ be the set of vertices~$v$ for which
\begin{enumerate}
\item \label{qone:isdegree}$\deg(v)<\maxdeg(G)$,
\item \label{qone:prop:outsider} there is a twin equivalence class $V'$ of $\Vmaxdeg(G)$ with  $V'=N[v]\cap \Vmaxdeg(G)$,
\item $G_v$ has as many flanks as~$G$, and\label{qone:prop:bulk:is:all}
\item \label{qone:many:at:the:border}$\lvert \NTildeZero{G_v}{A^\infty(G_v)_1}  \cup \NTildeZero{G_v}{A^\infty(G_v)_2}\rvert$ is minimal among all cards~$G_v$ satisfying Properties~\ref{qone:isdegree}--\ref{qone:prop:bulk:is:all}.
\end{enumerate}

\begin{lemma} \label{lem:Q1:reconstructible}
We have $v\in Q^1$ if and only if~$v\in O_i$ and~$|O_i|= 1$ for some~$i\in \{1,2\}$. 
Moreover~$\leftmset G_v \colon v\in Q^1 \rightmset$ is reconstructible.
\end{lemma}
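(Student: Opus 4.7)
The plan is to handle the trivial case first, then verify the biconditional in the main case, and finally observe that the defining properties of $Q^1$ are all decidable from the deck.

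First, if $|O_1| \geq 2$ and $|O_2| \geq 2$, then by definition $Q^1 = \emptyset$, and there is also no $i$ with $|O_i| = 1$, so both sides of the biconditional are empty. Since the multiset $\leftmset |O_1|, |O_2| \rightmset$ is reconstructible (Lemma~\ref{lem:Q:sizes}), we can detect this case from the deck and output the empty multiset for $\leftmset G_v : v \in Q^1 \rightmset$. From now on I assume $|O_i| \leq 1$ for some $i \in \{1,2\}$.

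For the forward direction $v \in O_i$ with $|O_i| = 1 \Rightarrow v \in Q^1$, I would check each of Properties~\ref{qone:isdegree}--\ref{qone:many:at:the:border}. Properties~\ref{qone:isdegree} and~\ref{qone:prop:outsider} follow directly from Lemma~\ref{lem: properties of o}, taking $V' = A_i$. Property~\ref{qone:prop:bulk:is:all} follows from Lemma~\ref{lem:one:flank:means:max:deg:rivet}, which gives $F(G-v) = F(G)$ and in particular the same number of flanks as~$G$. For Property~\ref{qone:many:at:the:border} one argues that in $G_v$ the extremal max-degree class shifts from $A_i$ to the twin class $A_i'$ of the witnessing max-degree vertex $x$ supplied by Lemma~\ref{lem:one:flank:means:max:deg:rivet}, and the old $A_i$ is absorbed into the extended chain $A^\infty(G_v)_i$ (since $x$ and $A_i$ differ by a single neighbor $o_i = v$ in~$G$, hence by none in $G_v$). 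Consequently $\Span_{G_v}(A^\infty(G_v)_i)$ is as small as possible, and this minimum is attained precisely for such cards.

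For the converse $v \in Q^1 \Rightarrow v \in O_i$ with $|O_i|=1$, I proceed in three steps. First, from Property~\ref{qone:prop:outsider} the unique twin class $V' = N_G(v) \cap \Vmaxdeg(G)$ is extremal in the linear order supplied by Lemma~\ref{lem:order:of:max:deg:vertices}; otherwise removing $v$ would leave the extremal structure of $\Vmaxdeg(G)$ essentially intact, and the span-union in Property~\ref{qone:many:at:the:border} would be strictly larger than the value achieved by an outsider in a singleton class (whose existence is guaranteed by the assumption of the current case). Hence $V' \in \{A_1, A_2\}$, say $V' = A_i$. Second, $v$ is in fact an outsider of $A_i$: if $v$ had a neighbor outside $\Span_G(A_i)$, then by Lemma~\ref{lem:nucleus:and:span}\eqref{itm:span-is-intuitive-span} such a neighbor would pin $v$ between $A_i$ and a further part of the graph in any tidy interval representation, and deleting $v$ would not reshape the extremal classes of $G_v$ as required, again contradicting the minimality in Property~\ref{qone:many:at:the:border}. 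Third, $|O_i|=1$: if instead $|O_i| \geq 2$, then Lemma~\ref{lem: steal vertex from outsider class} yields that $G-v$ has one more flank than $G$, contradicting Property~\ref{qone:prop:bulk:is:all}.

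Finally, to argue that $\leftmset G_v : v \in Q^1 \rightmset$ is reconstructible, I would observe that each of Properties~\ref{qone:isdegree}--\ref{qone:many:at:the:border} can be tested on an individual card $G_v$ using only reconstructible data: the value $\maxdeg(G)$ and the degree sequence are reconstructible (Lemma~\ref{lem: reconstruct-degree-sequence}), so Property~\ref{qone:isdegree} and the identification of $\Vmaxdeg(G)$-vertices inside $G_v$ are available; the twin-class structure used in Property~\ref{qone:prop:outsider} is recognizable inside $G_v$; the number of flanks used in Property~\ref{qone:prop:bulk:is:all} is computable in $G_v$ and the flank count of $G$ is reconstructible by Lemma~\ref{lem:sizes}; and Property~\ref{qone:many:at:the:border} is intrinsic to $G_v$. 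The main obstacle is to make Property~\ref{qone:many:at:the:border} quantitatively precise in both directions, as this requires a careful comparison of $A^\infty(G_v)$ across outsider versus non-outsider deletions, showing that removing the unique outsider collapses the extremal span to a strict minimum while other deletions satisfying Properties~\ref{qone:isdegree}--\ref{qone:prop:bulk:is:all} leave it strictly larger.
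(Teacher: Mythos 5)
Your overall skeleton matches the paper's proof: dispose of the case $|O_1|,|O_2|\geq 2$ via Lemma~\ref{lem:Q:sizes}, verify Properties~\ref{qone:isdegree}--\ref{qone:prop:bulk:is:all} for a singleton outsider using Lemma~\ref{lem: properties of o} and Lemma~\ref{lem:one:flank:means:max:deg:rivet}, and test the four properties card by card for reconstructibility. The gap is in the treatment of Property~\ref{qone:many:at:the:border}, in both directions. In the forward direction you assert that the minimum ``is attained precisely for such cards'', and in the converse your Steps 1 and 2 claim that any $v$ whose max-degree neighborhood class is not extremal, or which is not an outsider, would make the span-union strictly larger. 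Neither claim is proved, and together they are exactly the content of the lemma: you are assuming the characterization of the minimizers that you are supposed to establish. Your own closing sentence concedes this.

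The paper closes this gap with a concrete enumeration that your proposal never reaches. It first shows that for every $v$ satisfying Properties~\ref{qone:isdegree}--\ref{qone:prop:bulk:is:all} one has $\Span_{G_v}(A^\infty(G_v)_i)\subseteq\Span_G(A^\infty(G)_i)\setminus\{v\}$, with equality when $v$ is a singleton outsider, so that minimality forces $v\in\Span_G(A^\infty(G)_1)\cup\Span_G(A^\infty(G)_2)$. It then proves that for each $i$ there are exactly \emph{two} vertices of non-maximum degree whose max-degree neighborhood is a twin class inside the chain $A^\infty(G)_i$: the outsider $o_i$ at one end of the chain and a second vertex $x_i$ at the other end. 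Finally it excludes $x_i$ by showing that $x_i$ has a neighbor outside $\Span_G(A^\infty(G)_i)$, treating separately the degenerate case $\Span_G(A^\infty(G)_1)=\Span_G(A^\infty(G)_2)$. The competitor $x_i$ is precisely the candidate your Steps 1 and 2 would have to rule out, and it is the reason the chains $A^\infty$ are introduced at all; without identifying and excluding it, the converse does not go through. Your Step 2 also does not engage with $A^\infty$: being an outsider of $A_i$ is not the condition that Property~\ref{qone:many:at:the:border} isolates --- membership in $\Span_G(A^\infty(G)_1)\cup\Span_G(A^\infty(G)_2)$ is --- and the two coincide only after the enumeration above has been carried out.
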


\begin{proof}
Since $\leftmset |O_1|,|O_2|\rightmset$ is reconstructible we can reconstruct whether $Q^1 = \emptyset$.

Hence, we may assume~$|O_1|=1$ or~$|O_2|=1$.

($\Leftarrow$) Let~$v\in O_i$ with~$|O_i|=1$.
Properties \ref{qone:isdegree} and~\ref{qone:prop:outsider} are satisfied with~$V'=A_i$ for some~$i$, since $v$ is an outsider.
We argue that~$G_v$ has as many flanks as~$G$. 
By Lemma~\ref{lem:one:flank:means:max:deg:rivet}, there is a vertex~$x\in \NTildeOne{G}{A_i}$ of maximum degree that is adjacent to all vertices of~$N[A_i]\setminus\{v\}$.
 Note that~$x$ has one side fewer in~$G_v$ than it does in~$G$. It follows that~$G_v$ has as many flanks as~$G$.
 
 Regarding Property~\ref{qone:many:at:the:border}, we note that for every~$v\in V(G_v)$ with Properties~\ref{qone:isdegree}--\ref{qone:prop:bulk:is:all} we have for each~$i\in \{1,2\}$ that $\NTildeZero{G_v}{A^\infty(G_v)_i} \subseteq \NTildeZero{G}{A^\infty(G)_i} \setminus\{v\}$.
 On the other hand, we have $\NTildeZero{G_v}{A^\infty(G_v)_i} =\NTildeZero{G}{A^\infty(G)_i}\setminus\{v\}$ in case~$v\in O_j$ with~$j\in \{1,2\}$ and~$|O_j|=1$: here we use that if~$v\in A^\infty(G)_i \cap O_{\neg i}$ then~$A^\infty(G)_i= A^\infty(G)_{\neg i}$.

($\Rightarrow$) For the other direction assume~$v\in Q^1$ and let $V'=N[v]\cap \Vmaxdeg(G)$ be the twin equivalence class of $\Vmaxdeg(G)$ with $N[v]\subseteq  N[V']$.

The fact that~$G_v$ has as many flanks as~$G$ implies that~$v$ is not an outsider in a class of size larger than~$1$.

From the previous part of the proof for an outsider~$o$, we know that  $| \NTildeZero{G_o}{A^\infty(G_o)_1}\cup \NTildeZero{G_o}{A^\infty(G_o)_2}| = |\NTildeZero{G}{A^\infty(G)_1} \setminus \{o\}\cup \NTildeZero{G}{A^\infty(G)_2}\setminus\{o\}|$.

Then Property~\ref{qone:many:at:the:border} implies that~$v\in \NTildeZero{G}{A^\infty(G)_1}\cup \NTildeZero{G}{A^\infty(G)_2}$.

We argue that for each~$i\in \{1,2\}$, there are exactly two vertices~$v$ with~$\deg(v)<\maxdeg(G)$ for which $N[v]\cap \Vmaxdeg(G)$ is a twin equivalence class in~$A^\infty(G)_i$: Indeed, if~$v_1,\ldots,v_t$ is a maximal sequence of vertices in~$A^\infty(G)_i$ containing~$v$ such that~$v_1\in A_i$ and~$|N[v_i] \setminus N[v_{i+1}]|= 1$, then~$v$ is either the single vertex in~$N[v_1] \setminus N[v_{2}]$ or the single vertex in~$N[v_{t-1}] \setminus N[v_{t}]$. Note that the single vertex~$o_i$ in~$N[v_1] \setminus N[v_{2}]$ is the outsider in~$O_i$. Let us call the other vertex~$x_i$.

If~$\NTildeZero{G}{A^\infty(G)_1}= \NTildeZero{G}{A^\infty(G)_2}$ then there are exactly two vertices with Property~\ref{qone:isdegree} and~\ref{qone:prop:outsider} in $\NTildeZero{G}{A^\infty(G)_1}\cup\NTildeZero{G}{A^\infty(G)_2}$, namely the vertices in~$O_1\cup O_2$.

We may thus suppose $\NTildeZero{G}{A^\infty(G)_1}\neq \NTildeZero{G}{A^\infty(G)_2}$.
For each~$i\in \{1,2\}$, we argue that~$x_i$ is not in 
$\NTildeZero{G}{A^\infty(G)_1}\cup\NTildeZero{G}{A^\infty(G)_2}$, which finishes the proof.
Note first that~$x_i\in  \NTildeZero{G}{A^\infty(G)_{\neg i}}$ implies that~$\NTildeZero{G}{A^\infty(G)_1}= \NTildeZero{G}{A^\infty(G)_2}$, which we ruled out.
So it remains to show that~$x_i\notin  \NTildeZero{G}{A^\infty(G)_{i}}$.
The vertices in~$N[x]\cap \Vmaxdeg(G)$ have no neighbor outside of~$\NTildeZero{G}{A^\infty(G)_i}$. We argue that~$x_i$ has a neighbor outside of~$\NTildeZero{G}{A^\infty(G)_i}$. Indeed, otherwise no vertex has a neighbor outside of~$\NTildeZero{G}{A^\infty(G)_i}$, which implies that~$A_{\neg i}$ must be in~$\NTildeZero{G}{A^\infty(G)_i}$ which implies again~$\NTildeZero{G}{A^\infty(G)_1}= \NTildeZero{G}{A^\infty(G)_2}$.

\emph{($Q^1$ is reconstructible)}
To see that the cards in~$Q^1$ are reconstructible, observe that Properties \ref{qone:isdegree} and~\ref{qone:prop:outsider} can be reconstructed for a specific card~$G_v$ and that the number of flanks of~$G$ is reconstructible.
Then the last property can be determined by inspecting all the cards satisfying the first three properties.
\end{proof}

\begin{corollary}
The multiset of cards~$ \leftmset G_v\colon \text{$v$ is an outsider of $G$} \rightmset$ is reconstructible.
\end{corollary}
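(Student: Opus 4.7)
The plan is to observe that the two already-established lemmas directly partition the outsiders by the size of their outsider class, so the corollary follows by taking a multiset union.

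More concretely, every outsider $v$ of $G$ lies in some outsider class $O_i$ with $i\in\{1,2\}$, and since $|O_i|$ is either $1$ or at least $2$, exactly one of the following holds: $v\in O_i$ with $|O_i|\geq 2$, or $v\in O_i$ with $|O_i|=1$. By Lemma~\ref{lemma:find:Q2} the first condition characterizes membership in $Q^{\geq 2}$, and by Lemma~\ref{lem:Q1:reconstructible} the second characterizes membership in $Q^1$. So the set of outsiders is the disjoint union $Q^{\geq 2}\mathbin{\dot\cup} Q^1$.

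The key step is then simply to take the multiset union
\[ \leftmset G_v : v\in Q^{\geq 2}\rightmset \,\dunion\, \leftmset G_v : v \in Q^1\rightmset = \leftmset G_v : v\text{ is an outsider of }G\rightmset. \]
Both multisets on the left-hand side are reconstructible by the two lemmas cited, and since reconstructibility is preserved under disjoint multiset union of reconstructible multisets of cards, the right-hand side is reconstructible as well.

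No obstacle is expected here: the real work has already been done in identifying $Q^{\geq 2}$ and $Q^1$ from the deck. The only subtlety to mention is that the characterizations in Lemmas~\ref{lemma:find:Q2} and \ref{lem:Q1:reconstructible} are mutually exclusive on outsiders (via the class-size dichotomy), so there is no double counting in the multiset union.
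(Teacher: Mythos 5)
Your proposal is correct and matches the paper's own proof essentially verbatim: both decompose the outsiders into $Q^1$ and $Q^{\geq 2}$ via the class-size dichotomy, invoke Lemmas~\ref{lem:Q1:reconstructible} and~\ref{lemma:find:Q2} for the two reconstructible multisets, and conclude by taking their disjoint union. No differences worth noting.
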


\begin{proof}
	The set~$Q\coloneqq Q^1\cup Q^{\geq 2}$ contains exactly the outsiders of $G$.
	Since $\leftmset G_v\colon v \in  Q^1 \rightmset$ is reconstructible (Lemma~\ref{lem:Q1:reconstructible}),
	$\leftmset G_v\colon v \in Q^{\geq 2} \rightmset$ is reconstructible (Lemma~\ref{lemma:find:Q2}), and both multisets are disjoint also
	$\leftmset G_v \colon v \in Q^1 \cup Q^{\geq 2} \rightmset$ is reconstructible as the union of the two multisets.
\end{proof}

\section[No flank (|F1|=|F2|=0)]{No flank ($|F_1|=|F_2|=0$)}\label{sec:no:flank}
In the following, we consider the case in which the interval graph~$G$ has no flanks, that is~$|F_1|=|F_2|=0$. 
We may continue to assume that~$G$ is connected, has no universal vertex, and $\maxdeg(G) \geq 3$.
Note that by Lemma~\ref{lem:extreal:max:deg:vertices}, the set~$A$ is composed of exactly two twin equivalence classes~$A_1$ and~$A_2$.

\begin{lemma}
For each~$i\in \{1,2\}$, the triple~$\partition_G(O_i)$ is a clean clique separation.
\end{lemma}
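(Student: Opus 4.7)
The plan is to work in a tidy interval representation of $G$ with $\ell_{A_1} < \ell_{A_2}$. Such a representation exists because, under the hypotheses of this section, Lemma~\ref{lem:extreal:max:deg:vertices} together with Lemma~\ref{lem:anonempty-ononempty} ensure that $A_1$, $A_2$, $O_1$, and $O_2$ are all nonempty. By symmetry it suffices to treat $i=1$. Lemma~\ref{lem: outsider-in-interval-rep} then yields $O_1 = \{v \in V(G) \colon r_v < \ell^{\star}\}$, where $\ell^{\star} \coloneqq \min \{\ell_u \colon u \in N_G(\Span(A_1))\}$; in particular $r_{O_1} < \ell^{\star}$.

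The core structural step is to show that every separator vertex $c \in N_G(O_1)$ satisfies $\ell_c \leq r_{O_1} < \ell^{\star} \leq r_c$. The bound $\ell_c \leq r_{O_1}$ follows because $c$ is adjacent to some $o \in O_1$, giving $\ell_c \leq r_o \leq r_{O_1}$. For $\ell^{\star} \leq r_c$, I plan to use that $c \in N_G[O_1] \subseteq \Span(A_1)$ (since $N_G(o) \subseteq \Span(A_1)$ for any outsider $o$) while $c \notin O_1$; therefore some neighbor $y$ of $c$ lies outside $\Span(A_1)$, forcing $y \in N_G(\Span(A_1))$ and hence $\ell_y \geq \ell^{\star}$, so that the edge $cy$ yields $r_c \geq \ell_y \geq \ell^{\star}$. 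Consequently $\ell^{\star} \in [\ell_c, r_c]$ for every $c \in N_G(O_1)$, and in fact the common intersection $\bigcap_{c \in N_G(O_1)}[\ell_c, r_c]$ contains the closed interval $[r_{O_1}, \ell^{\star}]$.

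To finish, I would show $\ell_b > r_{O_1}$ for every $b \in \overline{N_G[O_1]}$: if $b \notin \Span(A_1)$ then $b \in N_G(\Span(A_1)) \cup \overline{N_G[\Span(A_1)]}$ and $\ell_b \geq \ell^{\star} > r_{O_1}$; if $b \in \Span(A_1) \setminus O_1$, the same argument as above gives $r_b \geq \ell^{\star} > r_o$ for every $o \in O_1$, and since $b$ is non-adjacent to every outsider, disjointness of $[\ell_b, r_b]$ from each $[\ell_o, r_o]$ combined with $r_b > r_o$ forces $\ell_b > r_o$, whence $\ell_b > r_{O_1}$. Any point $c^{\star}$ chosen in the nonempty open interval $(r_{O_1}, \min \{\ell^{\star}, \min_{b \in \overline{N_G[O_1]}} \ell_b\})$ then lies in every $[\ell_c, r_c]$ for $c \in N_G(O_1)$ and simultaneously strictly between $\max_{o \in O_1} r_o$ and $\min_{b \in \overline{N_G[O_1]}} \ell_b$, which is the clean clique separation condition. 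The principal obstacle I anticipate is the ``middle'' subcase $b \in \Span(A_1) \setminus O_1$ non-adjacent to every outsider: a priori such a $b$ could have a small left endpoint, but the fact that $b \notin O_1$ drags its right endpoint past $\ell^{\star}$, which in turn pushes its left endpoint past~$r_{O_1}$.
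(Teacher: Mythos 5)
Your proof is correct and takes essentially the same route as the paper: the key input in both cases is Lemma~\ref{lem: outsider-in-interval-rep}, which places $O_i$ at the extreme end of a tidy representation. The only difference is that the paper then simply invokes Lemma~\ref{lem:tilde:sets:give:clean:clique:sep} Part~(i), whereas you unfold that verification by hand (reading the clean-clique condition, as the paper itself effectively does, as the existence of a point of $\bigcap_{c\in N_G(O_i)}[\ell_c,r_c]$ lying strictly between the two parts).
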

\begin{proof}
By Lemma~\ref{lem: outsider-in-interval-rep}, the assumptions for Lemma~\ref{lem:tilde:sets:give:clean:clique:sep} Part~\eqref{lem:item:no:tilde} are satisfied and, hence, $\partition_G(O_i)$ is a clean clique separation.
\end{proof}

\begin{remark}
	Since we assume that no vertex of $G$ is universal, the condition $\Vmaxdeg(G) \neq A_i$ is always satisfied in case $F_G = \emptyset$.
\end{remark}

Recall that~$H_{O_i}$ is the annotated induced subgraph~$G[O_i\cup N_G(O_i)]$.

\begin{lemma} \label{lem:HO1HO2:reconstructible}
The multiset~$\leftmset H_{O_1},H_{O_2} \rightmset$ is reconstructible (up to isomorphism of annotated graphs).
\end{lemma}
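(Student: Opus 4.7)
The plan is to leverage the corollary preceding this lemma, which establishes that the multiset $\mathcal{O} \coloneqq \leftmset G_v : v \text{ is an outsider of } G\rightmset$ is reconstructible, together with Lemma~\ref{lem:Q:sizes} which yields $\leftmset |O_1|, |O_2|\rightmset$. The key observation is that if $v \in O_i$, then by Lemma~\ref{lem: outsider-in-interval-rep} the interval of $v$ sits at one geometric extreme of any tidy interval representation of $G$, whereas $O_{\neg i} \cup N_G(O_{\neg i})$ lies at the opposite extreme. In particular, $v$ has no neighbor in $O_{\neg i} \cup N_G(O_{\neg i})$, so both the induced subgraph $G[O_{\neg i} \cup N_G(O_{\neg i})]$ and the degrees of its vertices in $N_G(O_{\neg i})$ are preserved verbatim in $G-v$. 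Consequently $H_{O_{\neg i}}$ can be read off the card $G_v$ as soon as we locate the vertex set $O_{\neg i} \cup N_G(O_{\neg i})$ inside that card.

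To locate this vertex set, we apply the outsider-identification machinery of Section~\ref{sec:outsiders} to $G-v$ itself, which is an interval graph with at most one flank and no universal vertex. When $|O_i| \geq 2$, Lemma~\ref{lem: steal vertex from outsider class} guarantees that $O(G-v) = O_{\neg i}$ and $N_{G-v}(O_{\neg i}) = N_G(O_{\neg i})$, so the outsider class of $G_v$ together with its $G_v$-neighborhood is precisely the desired set. When $|O_i| = 1$, Lemma~\ref{lem:one:flank:means:max:deg:rivet} tells us that $G-v$ inherits the flanks of $G$ but gains a new twin class $A'_i$ of bordering maximum-degree vertices on the side where $v$ used to sit; this may produce an additional outsider class $O'$ in $G-v$ that must be distinguished from $O_{\neg i}$. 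We single out $O_{\neg i}$ by comparing the outsider-class sizes in $G_v$ with the already reconstructed $\leftmset |O_1|, |O_2|\rightmset$ and by observing that only $O_{\neg i}$ is adjacent to $A_{\neg i}$, which persists in $G-v$ unchanged.

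Once $O_{\neg i} \cup N_G(O_{\neg i})$ is located in $G_v$, we annotate each vertex $c \in N_G(O_{\neg i})$ with $\deg_{G-v}(c) = \deg_G(c)$, producing $H_{O_{\neg i}}$ as an annotated graph. Letting $v$ range over $\mathcal{O}$, outsider cards with $v \in O_1$ contribute $|O_1|$ copies of $H_{O_2}$, and cards with $v \in O_2$ contribute $|O_2|$ copies of $H_{O_1}$; together with the reconstructed $\leftmset |O_1|, |O_2|\rightmset$ this determines $\leftmset H_{O_1}, H_{O_2}\rightmset$ (directly when $|O_1| \neq |O_2|$, and as an unordered pair when $|O_1| = |O_2|$). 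The main obstacle will be the case $|O_i| = 1$, where the newly created twin class $A'_i$ may introduce a spurious outsider class that needs to be disentangled from the genuine $O_{\neg i}$; the resolution combines the cardinality information from Lemma~\ref{lem:Q:sizes} with the geometric characterization of outsiders in Lemma~\ref{lem: outsider-in-interval-rep} to combinatorially single out $O_{\neg i}$.
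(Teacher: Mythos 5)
Your central idea --- that an outsider $v\in O_i$ has no neighbour in $O_{\neg i}\cup N_G(O_{\neg i})$, so that $H_{O_{\neg i}}$ survives in the card $G_v$ with its annotations intact and can be read off once that vertex set is located --- is exactly the paper's argument in the case $\min\{|O_1|,|O_2|\}\geq 2$, where Lemma~\ref{lem: steal vertex from outsider class} makes $O_{\neg i}$ the \emph{unique} outsider class of $G-v$ and the location problem disappears. The gap is in the subcases with $|O_i|=1$, which you correctly flag as the main obstacle but do not actually resolve. There the card $G_v$ has \emph{two} twin classes of one-sided maximum-degree vertices, namely $A_{\neg i}$ and the new class $A_i'$ from Lemma~\ref{lem:one:flank:means:max:deg:rivet}, and correspondingly two candidate outsider classes, the genuine $O_{\neg i}$ and a spurious one. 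Your criterion (a) (compare class sizes against $\leftmset |O_1|,|O_2|\rightmset$) fails whenever the spurious class happens to have size $|O_{\neg i}|$, which is not excluded. Your criterion (b) (``only $O_{\neg i}$ is adjacent to $A_{\neg i}$, which persists in $G-v$ unchanged'') is circular: a card is an unlabeled isomorphism class, and deciding which of the two one-sided maximum-degree classes of $G_v$ is ``the old'' $A_{\neg i}$ is precisely the same left--right disambiguation problem as deciding which outsider class is $O_{\neg i}$. Lemma~\ref{lem: outsider-in-interval-rep} characterizes outsiders in a representation of $G$, not of the card, so it does not break this symmetry either.

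The paper closes this gap with two devices absent from your proposal. First, when $|O_i|=1$ it never locates $O_i$ inside a card at all: $H_{O_i}$ is a single vertex $q_i$ joined to a clique, hence determined by $\deg_G(q_i)$ together with $\leftmset \deg_G(u)\colon u\in N_G(q_i)\rightmset$, which is strongly reconstructible by Lemma~\ref{lem: reconstruct-degree-sequence}. Second, in the mixed case $|O_1|=1<|O_2|$ it resolves the two-candidate ambiguity in $G_{o_1}$ by computing the \emph{spurious} annotated graph from a different card $G_x$ with $x\in O_2$ (where $A_1$ is the unique one-sided maximum-degree class and the spurious class can be exposed by further deleting the vertex of $O_1$) and then identifying $H_{O_2}$ by elimination. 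Some such cross-referencing between cards, or the degree-sequence shortcut, is genuinely needed; as written, your argument does not go through when some $|O_i|=1$.
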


\begin{proof}
Assume first that~$\min \{|O_1|,|O_2|\} >1$.
If~$o\in O_i$, then we obtain from Lemma~\ref{lem: steal vertex from outsider class} that $O_i\setminus \{o\}\subseteq F_{G-o}$ and $O_{\neg i} = O(G-o)$.
No vertex in~$N_G[O_{\neg i}]$ is adjacent to~$o$, so~$H_{O_{\neg i}}$ is the graph induced by $N_{G_o}[O(G_o)]$ in~$G_o$ (including the annotation).

\medskip
If~$|O_1| = |O_2| = 1$, then by Lemma~\ref{lem:Q1:reconstructible}, we may inspect the two cards~$G_{q_1}$ and~$G_{q_2}$ in~$Q^1$.
Let~$i \in \{1,2\}$. We determine~$\deg_G(q_i)$. The neighbors of~$q_i$ are a clique in~$G$ and the multiset of their degrees can be reconstructed (see Lemma~\ref{lem: reconstruct-degree-sequence}). This determines~$H_{O_i}$ as an annotated graph.

\medskip
Finally, assume that~$|O_1|=1$ and~$|O_2|>1$.
With the previous argument, we determine~$H_{O_1}$.
In the following, we determine~$H_{O_2}$.
By Lemma~\ref{lem:Q1:reconstructible}, we can determine the card~$G_{o}$ with~$o\in O_1$. We argue that~$G_{o}$ has two equivalence classes of maximum degree vertices with only one side.
Indeed, it has the twin equivalence class~$A_{2}$ with only one side. It also has a twin equivalence class~$A_1'$ of maximum degree vertices  (Lemma~\ref{lem:one:flank:means:max:deg:rivet}) adjacent to all vertices of~$N[A_1]$ except~$o$.
We now argue that~$A_1'$ is not~$A_{2}$. Indeed, if this was the case, then vertices in~$A_1$ would be adjacent to all vertices of~$G$ except those in~$O_{2}$ and vertices in~$A_{2}$ would be adjacent to all vertices except those in~$O_1$, but this means vertices in~$A_1$ and~$A_2$ have different degrees, which cannot be.

Set $O_1' \coloneqq O(G-o) \cap A_1'$.
The multiset $\leftmset H_{O_1'}, H_{O_{2}} \rightmset$ (up to isomorphism of annotated induced subgraphs) can be reconstructed from $G_o$.
However, we need to determine which of these graphs is~$H_{O_{2}}$. For this it suffices to determine the isomorphism type of~$H_{O_1'}$. This can be done by inspecting a card~$G_{x}$ with~$x\in O_{2}$. Such a card has one flank and one maximum-degree equivalence class with only one side (by Lemma~\ref{lem: steal vertex from outsider class}), which must be~$A_1$. We can find a vertex~$o\in O_1$ by choosing any vertex~$o$ in~$N[A_1]$ with~$N[N[o]]\subseteq N[A_1]$. If we delete~$o$, we can recover~$H_{O_1'}$ as the graph induced by the one class of outsides in~$G_x-o$.
 Here, we use the fact that no vertex in~$H_{O_1'}$ is adjacent to a vertex from~$O_{2}$ (in particular not~$x$). This is the case since~$A_1'\neq A_{2}$.
\end{proof}

Note that with our notation defined before, we have that~$H_{\NTwo{G}{O_i}}$ is the annotated induced subgraph~$G\left[\NTwo{G}{O_i}\cup N_G(O_i)\right]$. We are ready to show that interval graphs without flanks are reconstructible.

\begin{lemma}\label{lem:no:flanks:reconstruct}
If~$|F_1|=0$ and~$|F_2|=0$, then~$G$ is reconstructible.
\end{lemma}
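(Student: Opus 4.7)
The plan is to apply Reconstruction-by-Separation (Lemma~\ref{lem:reconst:clique:sep:implies:reconst}) to the clean clique separation $\partition_G(O_i) = (O_i, N_G(O_i), \NTwo{G}{O_i})$ for a suitable $i \in \{1,2\}$. By Lemma~\ref{lem:HO1HO2:reconstructible} the multiset $\leftmset H_{O_1}, H_{O_2} \rightmset$ of annotated graphs is already reconstructible, so the crux of the proof is to reconstruct $H_{\NTwo{G}{O_i}}$ for one $i$ and to pair it with the correct $H_{O_i}$.

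In the generic case $\max\{|O_1|, |O_2|\} \geq 2$, say $|O_1| \geq 2$, I would apply the Distant Vertex Lemma (Lemma~\ref{lemma:farthest:away:and:at:least:two:reconstruct:other:side}) with $A = \NTwo{G}{O_1}$, $C = N_G(O_1)$, $B = O_1$. The cards $\leftmset G_v \colon v \in O_1 \rightmset$ can be separated out from $\leftmset G_v \colon v \in Q^{\geq 2} \rightmset$ (Lemma~\ref{lemma:find:Q2}): when $|O_1| \neq |O_2|$ the two sub-multisets are distinguished by the size of the new flank appearing after deletion (Lemma~\ref{lem: steal vertex from outsider class}), and when $|O_1| = |O_2|$ either choice yields an isomorphic reconstruction. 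Among these cards I select one $G_o$ for which $G-o$ is connected and $|N_G(o) \cap N_G(O_1)|$ is minimal. Inside $G-o$ the relevant triple is identifiable: the new flank contains $O_1 \setminus \{o\}$, the separator $N_G(O_1)$ is a maximum-degree twin class adjacent to this flank, and the remainder is $\NTwo{G}{O_1}$. The Distant Vertex Lemma then delivers $H_{\NTwo{G}{O_1}}$ as an annotated graph; pairing it with the element of $\leftmset H_{O_1}, H_{O_2} \rightmset$ whose annotated separator matches, Reconstruction-by-Separation reassembles $G$.

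The main obstacle is the edge case $|O_1| = |O_2| = 1$, where the Distant Vertex Lemma fails because $|B| = 1$. Writing $O_i = \{o_i\}$, note that $H_{\NTwo{G}{O_1}}$ is just $G - o_1$ equipped with the annotation $\deg_G(v) = \deg_{G-o_1}(v) + 1$ for $v \in N_G(o_1)$ and $\deg_G(v) = \deg_{G-o_1}(v)$ otherwise. Since $G_{o_1}$ is reconstructible as a card in $Q^1$ (Lemma~\ref{lem:Q1:reconstructible}), I only need to locate $N_G(o_1)$ inside $G_{o_1}$. The structural anchor is Lemma~\ref{lem:one:flank:means:max:deg:rivet}: in $G - o_1$ a new twin class $A_1'$ of maximum-degree bordering vertices appears, and the set $N_G[A_1] \setminus \{o_1\}$ can be recovered from $A_1'$ as those vertices of $N_{G-o_1}[A_1']$ that are also adjacent to $A_1$ (thereby excluding the single extra vertex $y \notin N_G[A_1]$ that singles itself out by not being adjacent to all of $A_1$). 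Within $N_G[A_1] \setminus \{o_1\}$ the clique $N_G(o_1) = N_G(O_1)$ is pinned down by containing $A_1$, being a clique of known size $\deg_G(o_1)$, and having the $G$-degree multiset dictated by the annotation of $H_{O_1}$; any residual ambiguity is absorbed by a twin-equivalence argument in the spirit of Lemma~\ref{lem:reconst:clique:sep:implies:reconst}, while the inherent symmetry between $o_1$ and $o_2$ guarantees that both candidate cards in $Q^1$ yield isomorphic reconstructions. With $N_G(o_1)$ identified, the annotation on $H_{\NTwo{G}{O_1}}$ is complete, and a final invocation of Reconstruction-by-Separation closes the argument. The technical difficulty in this edge case lies precisely in justifying the identification of $N_G(o_1)$ and in ruling out stray non-isomorphic reconstructions; the generic case is a more direct application of the tools already developed.
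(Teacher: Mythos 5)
Your generic case $\max\{|O_1|,|O_2|\}\geq 2$ is essentially the paper's argument: the paper also picks an extremal outsider $x\in O_2$ (maximizing $\sum_{y\in Y_x}\deg(y)$, which amounts to minimizing $N_G(x)\cap N_G(O_2)$), locates $O_2\setminus\{x\}$ and its neighborhood inside the card, recovers $H_{\NTwo{G}{O_2}}$ by correcting annotations, pairs it with the correct element of $\leftmset H_{O_1},H_{O_2}\rightmset$ using $G_x$, and applies Reconstruction-by-Separation. One imprecision there: $N_G(O_1)$ is a clique but is \emph{not} a maximum-degree twin class (it contains $A_1$ together with further vertices of $\Span(A_1)$ of various degrees); the paper instead locates $O_2\setminus\{x\}$ in the card via the extremal non-neighborhood-contained twin class $A_2'$ of degree $\maxdeg(G)-1$. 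This is fixable.

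The genuine gap is your treatment of $|O_1|=|O_2|=1$, which is where the paper spends almost all of its effort (its Cases 2 and 3). Reconstructing $G$ from $G_{o_1}$ amounts to deciding, up to an automorphism of $G-o_1$, \emph{which} subset of $N_G[A_1]\setminus\{o_1\}$ was $N_G(o_1)$. Knowing that this set is a clique of size $\deg_G(o_1)$ containing $A_1$ with a prescribed multiset of $G$-degrees does not pin it down: in the card you only see $(G-o_1)$-degrees, so a vertex $u\in N_G(o_1)$ with $\deg_G(u)=d$ and a vertex $w\notin N_G(o_1)$ with $\deg_G(w)=d-1$ are indistinguishable by degree in $G_{o_1}$, and if they are not twins in $G-o_1$ attaching the new vertex to the wrong one produces a non-isomorphic graph. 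Your sentence ``any residual ambiguity is absorbed by a twin-equivalence argument'' asserts exactly what needs to be proved, and no linear-ordering lemma applies here because the candidate vertices lie inside $\Span(A_1)$ rather than in the separator of a clean clique separation; likewise there is no ``inherent symmetry between $o_1$ and $o_2$''. The paper resolves this with substantial additional structure: it splits on whether $\maxdeg(G)=|V(G)|-2$, on the existence of $(\maxdeg(G)-1)$-vertices, and on connectivity of $G-(A_1\cup A_2)$; in the connected subcase it builds a \emph{different} clean clique separation inside $Z=G-(A_1\cup A_2)$ from a side of a bordering class guaranteed to contain two vertices (so that the Distant Vertex Lemma applies after all), and in the remaining subcase it introduces the classes $O_i^+$ and outsider lengths. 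None of this machinery is replaceable by the degree-matching argument you propose, so the edge case remains open in your write-up.
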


\begin{proof}
We distinguish cases according to whether~$\max \{|O_1|,|O_2|\} >1$. 

\paragraph{Case 1. $\mathbf{\max \{|O_1|,|O_2|\} >1}$.}
By symmetry, we may assume that $|O_2| \geq 2$.
For $v \in O_2$ set~$Y_v \coloneqq {N}_{G-v}(O_2\setminus \{v\})$.
Observe that $Y_v$ can be reconstructed from~$G_v$: there is precisely one non-neighborhood-contained twin equivalence class $A_2'$ in $G_v$ of vertices of degree $\maxdeg(G)-1$ which is extremal with respect to \say{$\prec$} among all non-neighborhood-contained twin equivalence classes of vertices of degree at least $\maxdeg(G)-1$.
The (unlabeled) vertices of $O_2\setminus\{v\}$ are precisely the ones in $\NTildeZero{G_v}{A_2'}\setminus N_{G_v}[\NTildeOne{G_v}{A_2'}]$.

Choose~$x\in O_2$ such that~$\sum_{y\in Y_x} \deg(y)$ is maximal. 
By the choice of~$x$, a vertex in~${N}_G(O_2)$ can only be adjacent to~$x$ if it is also adjacent to all other vertices of~$O_2$. Thus, in~$G$, 
for each vertex~$t$ in~${N}_G(O_2) \cap N_G(x)$, we have~$\deg(t)\geq |O_2|+|{N}_G(O_2)|$ (since ${N}_G(O_2)$ is a clique and there is at least one vertex in $\NTwo{G}{O_2}$ to which every vertex of  ${N}_G(O_2)$ is adjacent). In contrast, for vertices~$m$ in~$O_2$, we have~$\deg(m)\leq |O_2|+|{N}_G(O_2)|-1$. 

Now, we consider the graph~$(G-x)\left[{N}_G(O_2)\cup \NTwo{G}{O_2}\right]$. This graph is essentially $H_{\NTwo{G}{O_2}}= G[{N}_G(O_2)\cup \NTwo{G}{O_2}]$ except that the annotation of some vertices has been reduced by exactly one. This is the case for exactly the vertices that are neighbors of~$x$ in~$G$.
We can alter $(G-x)\left[{N}_G(O_2)\cup \NTwo{G}{O_2}\right]$ into~$H_{\NTwo{G}{O_2}}$ as follows. For each neighbor~$w$ that~$x$ has with degree at least~$|O_2|+|{N}_G(O_2)|$, we pick a vertex~$y\in (G-x)\left[{N}_G(O_2)\cup \NTwo{G}{O_2}\right]$ (arbitrarily) that has annotation~$\deg(w)-1$ and a degree in~$(G-x)\left[{N}_G(O_2)\cup \NTwo{G}{O_2}\right]$ of $\deg(w)-|O_2|$.

We alter its annotation by increasing it by 1 to~$\deg(w)$. (Vertices whose annotation has been altered once will not be altered again.)

Note that since neighborhoods are linearly ordered, two vertices~$w_1,w_2$ in~${N}_G(O_2)\cup \NTwo{G}{O_2}$ which have the same degree in~$G_x\left[\NTildeOne{G}{O_2}\cup \NTildeTwo{G}{O_2}\right]$ and which have the same annotation are twins in~$G_v$. It thus does not matter whether we increase the annotation of the vertex~$w_1$ instead of the vertex~$w_2$.

We have thus managed to reconstruct the graph~$H_{\NTwo{G}{O_2}}$.
By Lemma~\ref{lem:HO1HO2:reconstructible}, also $\leftmset H_{O_1},H_{O_2}\rightmset$ is reconstructible. Note further, as argued in the proof of Lemma~\ref{lem:HO1HO2:reconstructible}, from~$G_x$, we can determine which graph is~$H_{O_{1}}$. We can identify~$H_{O_2}$ up to isomorphism by choosing a graph in $\leftmset H_{O_1},H_{O_2}\rightmset$ so that the other graph is isomorphic to~$H_{O_{1}}$. 

Overall, we have determined~$(H_{O_2},H_{\NTwo{G}{O_2}})$ and by Lemma~\ref{lem:reconst:clique:sep:implies:reconst} and Lemma~\ref{lem:tilde:sets:give:clean:clique:sep}, the graph is reconstructible.

\paragraph{Case 2. $\mathbf{|O_1|= |O_2| =1}$  and $\mathbf{\maxdeg(G)= |V(G)|-2}$.} 
Say~$O_1=\{o_1\}$ and~$O_2= \{o_2\}$. 
In particular, for~$i\in \{1,2\}$, we have~$N_G[A_i]= V(G)\setminus\{o_{\neg i}\}$. Note that $\leftmset G_v\colon v \in A_1 \cup A_2 \rightmset$ is reconstructible since it is the multiset of all cards corresponding to maximum degree vertices of $G$.

We argue that $G$ is reconstructible if there exists a $(\maxdeg(G)-1)$-vertex $x$ in~$G$.
Since $\maxdeg(G) \geq 3$ and $|O_1| = |O_2| = 1$, we have that $x \notin O$.
By Lemma~\ref{lem: properties of o}, we have for $i \in \{1,2\}$  $N_G(o_i) \cap \Vmaxdeg(G) = A_i$ and $N_G[o_i] \subseteq N_G[A_i]$.
Hence, we can locate~$O$ in $G_x$ as the set of vertices not adjacent to two equivalence classes of vertices of degree~$\maxdeg(G)-1$.
 Inspecting the degrees of these vertices we can determine in~$G_x$ whether~$x$ has a neighbor in~$O$.
If~$x$ has no neighbor in~$O$, then we reconstruct~$G$ from~$G_x$ by adding a vertex adjacent to all vertices except those in~$O$.
The other case is that all $(\maxdeg(G)-1)$-vertices of $G$ are adjacent to a vertex of~$O$.
For this case, consider a card~$G_a$ with~$\deg_G(a)=\maxdeg(G)$ in a class~$A_i$ such that~$O_i$ has a neighbor of degree~$\maxdeg(G)-1$.
If~$a$ can be chosen from both~$A_1$ and~$A_{2}$, then we further assume that~$|A_i|\leq |A_{\neg i}|$. Note that we can determine whether~$a$ is contained in~$A_i$ with~$|A_i|\leq |A_{\neg i}|$ by ensuring~$a$ has maximum degree in~$G$ and choosing~$a$ so that it maximizes the size of the largest twin equivalence class of vertices of degree~$\Delta(G)-1$ in~$G_a$.

From~$G_a$, we reconstruct $G$ as follows: Locate a vertex~$x$ with~$\deg_{G_a}(x)=\maxdeg(G)-2$. If possible, pick~$x$ so that it is adjacent to a vertex which itself is not adjacent to the larger equivalence class of $(\maxdeg(G)-1)$-vertices.
There are two vertices not adjacent to~$x$. Insert~$a$ back into the graph and join it to all vertices except a non-neighbor of~$x$ of smallest degree in~$G_a$.

Next we argue that $G$ is reconstructible if~$G-(A_1\cup A_2)$ is not connected:
By the previous paragraph, we may assume that $G$ is free of $(\maxdeg(G)-1)$-degree vertices.
Consider a card $G_x$ with $x \in A_1 \cup A_2$.
Observe that $G_x- \Vmaxdeg(G_x)$ is isomorphic to $G-(A_1 \cup A_2)$ and, hence,
we can reconstruct the isomorphism types of the connected components of~$G-(A_1\cup A_2)$. If we can determine the isomorphism type of the graphs induced by the connected components~$C_1$ and~$C_2$ that contain~$o_1$ and~$o_2$, respectively, and can locate the vertex~$o_1$, respectively~$o_2$, in the component, the we can reconstruct the graph\footnote{Formally we can reconstruct~$\leftmset [(G[C_1],o_1)]_{\cong}, [(G[C_2],o_2)]_{\cong}\rightmset$)}. 
For this note that by deleting a vertex from~$A_i$ (where~$|A_i|\leq |A_{\neg i}|$), we can determine~$o_i$ and the isomorphism type of the connected component of~$G-(A_1\cup A_2)$ that contains~$o_i$ together with the vertex~$o_i$. We can also determine~$|A_i|$. We now consider~$G-a_{\neg i}$ with~$a_{\neg i}\in A_{\neg i}$. If~$|A_i|=|A_{\neg i}|$, we obtain the graph induced by the other component~$C_{\neg i}$ as before. Otherwise, if~$|A_i|<|A_{\neg i}|$, in~$G_{a_{\neg i}}$ there are exactly two vertices not adjacent to all maximum degree vertices (corresponding to~$o_i$ and~$o_{\neg i}$). This gives rise to two connected components each with a vertex singled out. We can identify the one belonging to~$o_i$ or they are isomorphic. In either case we can identify the isomorphism type of the connected components belonging to~$o_{\neg i}$. Overall if~$G-(A_1\cup A_2)$ is disconnected, we can reconstruct~$G$.

From now on, we may assume that~$Z \coloneqq G-(A_1\cup A_2)$ is connected and there is no $(\maxdeg(G)-1)$-vertex in~$G$. 

As usual, the equivalence classes of the vertices in~$Z$ are linearly ordered up to reversal (Lemma~\ref{lem:order:of:max:deg:vertices}). This includes all the vertices of $\Delta(Z)$.

Since~$o_i$ is an outsider of~$G$, by Lemma~\ref{lem: outsider-in-interval-rep}, in the graph~$Z$ there is a unique twin equivalence class of non-neighborhood-contained vertices~$Z_i$ adjacent to~$o_i$. Moreover, the class~$Z_i$ is the first or last equivalence class in the linear ordering of non-neighborhood-contained vertices of~$Z$.

For each~$i$ let~$D_i$ be the bordering equivalence class of~$Z$ that is closest to~$Z_i$. (It is possible that~$Z_i= D_i$. It is also possible that~$Z_i= D_{\neg i}$ but then~$D_i=D_{\neg i}$.)

We will argue now that in~$Z$, for at least one~$j\in \{1,2\}$, we have that~$o_{\neg j}$ is contained in a side~$R_j$ of~$D_j$. In fact, we will argue the following stronger statement:

\begin{claim} \label{claim:someside:contains:two}
In~$Z$, for at least one~$j\in \{1,2\}$, we have that~$o_{\neg j}$ and (at least) one additional vertex is contained in a side~$R_j$ of~$D_j$. 
\end{claim}

\noindent $\ulcorner$ 
For each~$j\in \{1,2\}$, since~$\maxdeg(D_{\neg j}) \leq \maxdeg(G)-2\leq n-4$
in total at least 3 vertices are in the sides of~$D_{j}$.
Thus one side contains at least 2 vertices. That side contains~$o_j$ or~$o_{\neg j}$. If it contains~$o_{\neg j}$ we are done. Otherwise the side contains~$o_j$ and some other vertex~$x$. This implies that some side of~$D_{\neg j}$ contains~$o_j$ and the other vertex~$x$ (since~$D_{\neg j}$ is farther away from~$Z_j$ than~$D_j$).\hfill$\lrcorner$

Let~$i\in \{1,2\}$ now be such that a side~$R_i$ of~$D_i$ contains~$o_{\neg i}$ and an additional vertex.

Note that from~$G_{o_i}$ we can determine whether $o_{\neg i}$  and an additional vertex is contained in a side~$R_i$ of~$D_{i}$ in~$Z$.

We obtain a clean clique separation~$\partition_Z(R_i)$. This separation of~$Z$ extends to a separation~$(R_i,N_Z(R_i)\cup A_1 \cup A_2,\NTwo{Z}{R_i})$ of~$G$ with linearly ordered neighborhoods.

From~$G_{o_i}$ we can recover~$H_{R_i}$ since vertices in~$R_i\cup N_Z(R_i)$ are not adjacent to~$o_i$ (Lemma~\ref{lemma:farthest:away:and:at:least:two:reconstruct:other:side}). (Here we also use that in~$G_{o_i}$ we can locate~$o_{\neg i}$ using the vertices in~$A$ and thus can locate~$R_i$.)

Since~$R_i$ contains at least one other vertex besides~$o_{\neg i}$, from~$G_{o_{\neg i}}$ we can recover~$H_{\NTwo{Z}{R_i}}$ (Lemma~\ref{lemma:farthest:away:and:at:least:two:reconstruct:other:side}). (Here we also use that in~$G_{o_{\neg i}}$ we can locate~$o_{i}$ using the vertices in~$A$ and thus can locate~$R_i\setminus \{o_{\neg i}\}$.)

This implies that, for some~$i\in \{1,2\}$, we can reconstruct~$(H_{R_i}, H_{\NTwo{Z}{R_i}})$ and hence the graph~$G$. 

\paragraph{Case 3. $\mathbf{|O_1|= |O_2| =1}$  and $\mathbf{\maxdeg(G) \neq |V(G)|-2}$. }
Say~$O_1=\{o_1\}$ and~$O_2= \{o_2\}$. In particular~$N_G[A_i]\neq V(G)\setminus\{o_{\neg i}\}$ for one and thus both~$i\in \{1,2\}$. 
By Lemma~\ref{lem:one:flank:means:max:deg:rivet}, besides~$A_1$ and~$A_2$ there is at least one other twin equivalence class of $\maxdeg(G)$-vertices in~$G$.

For $i$ in $\{1,2\}$ by Lemma~\ref{lem:one:flank:means:max:deg:rivet}, we have $F_{G_{o_i}} = \emptyset$.  We define~$O_i^+$ as the outsider class of~$G_{o_i}$ that is not a class of outsiders in~$G$. Set~$\bar O_i^+ \coloneqq \{o_i\} \cup O_i^+$.
We obtain a clean clique separation~$\partition_G(\bar O_i^+)$.

We can reconstruct~$\leftmset G_x\colon x\in O_j^+ \text{ and } |O_j^+|=1 \text{ for some $j \in \{1,2\}$ }\rightmset$ since this is exactly the set of cards $G_x$ with $\maxdeg(G_x) = \maxdeg(G)$ and a flank of size~$1$: indeed,~$F_{G_{o_i}} = \emptyset$ if~$o_i\in O_i$ and for~$x\notin O(G)$, if~$G_x$ has a flank, then that flank contains~$O_1 \cup O_1^+\setminus \{x\}$ or $O_2 \cup O_2^+\setminus \{x\}$.
Note that $|O_i^+|$ is the maximum size of an outsider class in $G_{o_i}$.
\begin{claim}
		If~$|O_i^+|>1$, then from~$G_{o_i}$ we can reconstruct~$H_{\overline{N_G[ \bar O_i^+]}}$ and~$H_{\bar O_{\neg i}^+}$.
\end{claim}
\noindent $\ulcorner$ We can locate~$O_i^+$ in~$G-o_i$ as the outsider class of size larger than one. A vertex adjacent to~$O_i$ is adjacent to all vertices of~$O_i^+$. We can thus apply Lemma~\ref{lemma:farthest:away:and:at:least:two:reconstruct:other:side} to reconstruct~$H_{\overline{N_G[ \bar O_i^+]}}$ and~$H_{\bar O_{\neg i}^+}$.
\hfill $\lrcorner$

\begin{claim}
	If~$|O_i^+|=1$, say $O_i^+ = \{o_i^+\}$, then from~$G_{o_i^+}$ we can reconstruct~$H_{\bar O_{\neg i}^+}$.
\end{claim}
\noindent $\ulcorner$ Observe that~$O_{\neg i}$ is the unique outsider class in~$G-{o_i^+}$ for~$O_i^+= \{o_i^+\}$.
The vertex~$o_i^+$ may have common neighbors with the vertices in~$H_{\bar O_{\neg i}^+}$. However, such vertices have degree~$\maxdeg$ in~$G$, are twins, and adjacent to all of~$\bar O_{\neg i}^+$. This means that from inspecting~$G_{o_i^+}$ we can determine the number of such neighbors of~$\bar O_{\neg i}^+$ and increase the annotation of degree~$\maxdeg-1$ vertices adjacent to all of~$O_{\neg i}^+$ but not to~$O_{\neg i}$ accordingly.
Thus, we may apply Lemma~\ref{lemma:farthest:away:and:at:least:two:reconstruct:other:side} to reconstruct~$H_{\bar O_{\neg i}^+}$.
\hfill $\lrcorner$

Together these two claims imply that if~$|O_i^+|>1$ for some~$i\in\{1,2\}$, we can reconstruct the graph, since we can reconstruct a pair~$(H_{\bar O_{ i}^+},H_{\overline{N_G[ \bar O_i^+]}})$ and apply Lemma~\ref{lem:reconst:clique:sep:implies:reconst}.

It remains to consider the case~$|O_1|=|O_1^+|=|O_2|=|O_2^+|=1$.
Say, $O_i^+ = \{o_i^+ \}$ for $i \in \{1,2\}$.
Define the \emph{outsider length} of $O_i$ 
 to be the length~$s$ of the longest chain of vertices~$(o_i = x_1,x_2,x_3,\ldots,x_s)$ for which~$\{x_i\}$ forms an outsider class in~$G-\{x_1,\ldots,x_{i-1}\}$ but not in~$G-\{x_1,\ldots,x_{i-2}\}$.
Note that $o_i = x_1$ and $o_i^+ = x_2$ in the outsider chain of $O_i$ and hence, both outsider lengths are at least~$2$.
Note that we can reconstruct the maximum length of an outsider chain in $G$ from the two cards $G_{o_1}$ and $G_{o_2}$.
W.l.o.g.,~the outsider length of~$O_{\neg i}$ is not larger than that of~$O_{i}$.

\begin{itemize}

\item 
Let us first assume that for the longest chain $(o_{i},o_i^+, x_3,\ldots,x_s )$ we have the property that $\maxdeg (G-\{o_{i},o_i^+,x_3,\ldots,x_s\}) = \maxdeg(G)$.
In particular, the outsider length of $O_{i}$ in $G-{o_{\neg i}}$ is the same as its length in $G$.
In~$G_{o_{\neg i}}$, we find the outsider class that is not an outsider class in~$G$ (intuitively where~$o_{\neg i}$ was removed), it is the one with shorter outsider length. We can reinsert~$o_{\neg i}$ since every vertex of degree~$d$ adjacent to~$o_{\neg i}$ is a vertex of degree~$d-1$ adjacent to~$O_{\neg i}^+$ in~$G_{o_{\neg i}}$ and all vertices with the same degree adjacent to~$O_{\neg i}^+$ in~$G_{o_{\neg i}}$ are twins.

\item Finally, assume~$\maxdeg (G-\{o_{i},o_i^+,x_3,\ldots,x_s\}) <\maxdeg(G)$. In that case, both outsider lengths of~$G$ decrease when removing a single vertex from~$O$.
As argued before, we can reconstruct~$\leftmset G_x\colon x\in O_i^+\text{ and } |O_i^+|=1\text{ for some $i \in \{1,2\}$}\rightmset=\leftmset G_x\colon x\in O_1^+ \cup O_2^+ \rightmset$. 
If~$s>3$, then in~$G_{o_i^+}$, we can recognize~$o_i$ and thus~$x_3$. We can then reconstruct~$G$ since we know the degrees of vertices adjacent to~$o_i$ and also of vertices adjacent to~$x_3$.

Otherwise~$s=2$ and there are exactly three twin equivalence classes of vertices of maximum degree in~$G$, namely~$A_1$,~$A_2$, and a third class~$A'$. We can reconstruct the cards~$G_a$ with~$a\in A'$ as the cards in which the number of pairs of vertices of degree~$\Delta(G)-1$ that dominate the graph is maximal. In particular, we can reconstruct $|A'|$.

If~$|A'|>1$, we can in~$G_a$ duplicate a non-bordering maximum degree vertex to reconstruct~$G$.
If~$|A'|=1$ then we can reinsert~$a$ in~$G_a$ to reconstruct~$G$ as follows. For each~$A_i$ there are exactly two vertices not adjacent to~$A_i$ in~$G-a$. If they are not twins, the vertex with the larger degree is adjacent to~$A'$. We thereby locate two vertices adjacent to~$A'$. On top of these, all vertices adjacent to both~$A_1$ and~$A_2$ are adjacent to~$A$.\qedhere
\end{itemize}

\end{proof}

\section[One flank (|F1|=0 and |F2|>0)]{One flank ($|F_1|=0$ and~$|F_2|>0$)}\label{sec:one:flank}

In the following, we consider the case in which the graph~$G$ has precisely one non-empty flank~$F_2$.  
We may continue to assume that~$G$ is connected, has no universal vertex, and $\maxdeg(G) \geq 3$.

By Lemma~\ref{lem: E-recognizable} and Lemma~\ref{lem:each:flank:represented:in:E}, the multiset of cards~$\mathcal{E}_2$ is non-empty and can be reconstructed from the deck and $G_v \in \mathcal{E}_2$ implies $v \in F_2$.
By Lemma~\ref{lem:extreal:max:deg:vertices}, there is one twin equivalence class~$A_1$ of vertices of maximum degree in~$G$ which have exactly one side and, hence,~$O(G)=O_1$.

\begin{lemma}\label{lem:one:flank:size:more:than:one:reconstruc}
If~$|F_1|=0$ and~$|F_2|>1$, then~$G$ is reconstructible.
\end{lemma}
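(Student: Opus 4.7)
The plan is to apply Reconstruction-by-Separation (Lemma~\ref{lem:reconst:clique:sep:implies:reconst}) to the clean clique separation $(F_2, R_2, B_2) \coloneqq \partition_G(F_2)$, where $R_2 = N_G(F_2)$ and $B_2 = \NTwo{G}{F_2}$. In any tidy interval representation the intervals of $F_2$ lie entirely to the right of those of $\bulk(G)$, and every vertex of $R_2$ has a neighbor in $F_2$ together with a neighbor in $\bulk(G)$; hence the vertices of $R_2$ share a common point in any such representation and form a clique, so the separation is clean. Reconstructing the two annotated subgraphs $H_{F_2}$ and $H_{B_2}$ therefore suffices.

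First I would reconstruct $H_{F_2}$ using an outsider card. Since $|F_1|=0$ and $G$ has no universal vertex, $A_1 \neq \emptyset$ and hence $O_1 \neq \emptyset$ by Lemma~\ref{lem:anonempty-ononempty}; the cards $G_o$ with $o \in O_1$ are identifiable from the deck by Lemmas~\ref{lemma:find:Q2} and~\ref{lem:Q1:reconstructible}. Because $N_G[o] \subseteq \Span_G(A_1)$ is disjoint from $N_G[F_2]$, the vertices of $F_2 \cup R_2$ together with all of their $G$-degrees are preserved in $G-o$, so $H_{F_2}$ sits intact inside $G_o$. To locate it I would first identify $A_1$ in $G-o$ (as the twin class of maximum-degree vertices with a single side on the outsider end) and then pick out $F_2$ among the at most two flanks of $G-o$ as the one whose neighborhood is not reached through $A_1$. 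When $|O_1|\geq 2$ the second flank of $G-o$ contains $O_1\setminus\{o\}$ and is distinguishable from $F_2$ by this criterion; when two candidate flanks happen to coincide as annotated subgraphs, the choice is immaterial.

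Next I would reconstruct $H_{B_2}$ via the Distant Vertex Lemma (Lemma~\ref{lemma:farthest:away:and:at:least:two:reconstruct:other:side}) applied with $A\coloneqq B_2$, $C\coloneqq R_2$, $B\coloneqq F_2$; the size hypothesis $|B|\geq 2$ is exactly our standing assumption $|F_2|>1$. I pick $b \in F_2$ such that $G-b$ is connected and $|N_G(b) \cap R_2|$ is minimal, which exists because $G[F_2]$ (an interval graph) has a simplicial vertex. Then $G_b \in \mathcal{E}_2$. For any graph $G'$ with the same deck as $G$, the reconstructible one-flank structure (Lemmas~\ref{lem:no:flanks:can:be:decided} and~\ref{lem:sizes}) supplies a matching clean clique separation $(F_2',R_2',B_2')$ and a corresponding vertex $b' \in F_2'$ whose card agrees with $G_b$, and any isomorphism $G-b \cong G'-b'$ automatically respects the partition into $B_2 \cup R_2$ and $F_2\setminus\{b\}$ because the sides are defined invariantly from the flank structure. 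The Distant Vertex Lemma then yields $H_{B_2} \cong H_{B_2'}$ as annotated graphs, and applying Lemma~\ref{lem:reconst:clique:sep:implies:reconst} to the reconstructed pair $(H_{F_2},H_{B_2})$ gives $G \cong G'$.

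The hardest part is correctly identifying $H_{F_2}$ inside an outsider card, since deleting an outsider may create a second flank in $G-o$ that must be told apart from $F_2$. The resolution hinges on the fact that the twin class $A_1$ of maximum-degree vertices with a single side is structurally recoverable from any card that preserves the bulk, and it neighbors the outsider-derived flank while being separated from $F_2$ through $R_2$. A secondary technicality, routinely handled as in earlier sections, is verifying that the minimality choice of $b$ in the Distant Vertex Lemma transfers consistently across deck-equivalent graphs.
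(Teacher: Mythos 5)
Your overall architecture coincides with the paper's: both reconstruct the pair $\bigl(H_{F_2}, H_{\NTwo{G}{F_2}}\bigr)$ for the clean clique separation $\partition_G(F_2)$ and finish with Lemma~\ref{lem:reconst:clique:sep:implies:reconst}, and your recovery of $H_{\NTwo{G}{F_2}}$ from an extremally chosen flank card in $\mathcal{E}_2$ via the Distant Vertex Lemma is essentially the paper's first step. The gap is in the recovery of $H_{F_2}$ from an outsider card $G_o$. You propose to locate $F_2$ by ``first identifying $A_1$ in $G-o$ as the twin class of maximum-degree vertices with a single side on the outsider end.'' This is not available: every vertex of $A_1$ is adjacent to $o$ (Lemma~\ref{lem: properties of o}), so in $G-o$ the class $A_1$ has degree $\maxdeg(G)-1$. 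If $\Vmaxdeg(G)\neq A_1$, then $\maxdeg(G_o)=\maxdeg(G)$ and $A_1$ is not a maximum-degree class of $G_o$ at all; moreover, when $G_o$ acquires a second flank (the one containing $O_1\setminus\{o\}$, cf.\ Lemma~\ref{lem: steal vertex from outsider class}), Lemma~\ref{lem:extreal:max:deg:vertices} applied to $G_o$ gives $A(G_o)=\emptyset$, so there is \emph{no} maximum-degree class with a single side in the card. Finally, ``on the outsider end'' is circular, since deciding which end of the unlabeled card is the outsider end is precisely the problem.

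The consequence is that when $G_o$ has two flanks you have no invariant criterion to decide which of the two flank-neighborhood subgraphs is $H_{F_2}$ and which belongs to the newly created flank. The paper resolves this by cross-referencing with a card $G_x$, $x\in F_2$: since $|F_2|>1$ one has $O_1=O(G-x)$, so the annotated graph at the new flank can be computed independently from $G_x$ and $H_{F_2}$ identified as the other member of the pair (Case~2 of the paper's proof). The case $\Vmaxdeg(G)=A_1$, where $\maxdeg(G_o)<\maxdeg(G)$ and $G_o$ may contain two one-sided twin classes of degree $\maxdeg(G)-1$, requires the further machinery of Claim~\ref{claim:recognize:X} (recognizing the distance-maximal set $X$ and then separating $A'$ from $A''$ via the vertices $w'$ and $w''$). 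Neither of these steps appears in your proposal, and without them the identification of $H_{F_2}$ inside the outsider card is not justified; the rest of your argument is sound.
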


\begin{proof}
We recover~$H_{\NTwo{G}{F_2}}$ as follows:
For~$v$ with $G_v \in \mathcal{E}_2$, define~$Y_v \coloneqq N_G[\bulk(G_v)] \cap N_G(F_{G_v})$. 
Choose a card~$G_x$ in~$\mathcal{E}_2$ such that~$\sum_{y\in Y_x} \deg(y)$ is maximal.
The graph~$H_{\NTwo{G}{F_2}}$ can be recovered with annotation by considering the graph~$G_x\left[\NTwo{G_x}{F_2}\cup {N}_{G_x}(F_2)\right]$   and increasing the annotation of certain vertices by one. The annotation of which vertices has to be increased is governed by the degree of the vertices in~$G_x\left[\NTwo{G_x}{F_2}\cup {N}_{G_x}(F_2)\right]$ 
(see the first case of Lemma~\ref{lem:HFi:reconstr} for details).

We argue now that~$H_{F_2}$ is reconstructible.
We distinguish three cases. For this let $v$ be in~$O_1$.

\paragraph{Case 1. $\mathbf{G_v}$ has maximum degree~$\mathbf{\Delta(G)}$ and only one flank.}
In this case then~$H_{F_2}$ is induced by the vertices from the flank and their neighbors. The annotation is the same in~$G_v$ and~$G$ since the vertices of $H_{F_2}$ are not adjacent to~$v$ in~$G$.

\paragraph{Case 2. $\mathbf{G_v}$ has maximum degree~$\mathbf{\Delta(G)}$ and two flanks.}
In particular, there are two annotated graphs induced by the flanks and their neighbors, namely~$H_{F_2}$ and a second graph~$H'_v$. We need to determine which one is which. We consider first all cards~$G_v$ with~$v\in O_1$. We can assume that the graph~$H'_v$ is always the same for all~$v$, otherwise we can identify~$H_{F_2}$ as the graph that is always present in the pair~$(H_{F_2},H'_v)$.

Consider a card~$G_x \in \mathcal{E}_2$.
Since $|F_2|>1$, we have $O_1 = O(G-x)$. Hence, we can reconstruct~$H'_v$ as the graph induced by the vertices in~$O_1\setminus \{v\}$ and their neighbors (where~$v\in O_1$ is arbitrary). We have thereby reconstructed~$H'_v$ and thus~$H_{F_2}$. 

\paragraph{Case 3. $\mathbf{\Delta(G-v) < \Delta(G)}$.}
In this case $\Delta(G-v)= \Delta(G)-1$ (since $v$ is an outsider).
The graph~$G-v$ has a twin equivalence class of $\Delta(G-v)$-vertices with only one side, namely~$A_1$.
If it does not have two such classes, then we can reconstruct~$H_{F_2}$ as graph induced by the vertices not in~$\Span_{G-v}(A_1)$.
If otherwise $G-v$ has a second such class~$A'$, then we reconstruct a multiset of two graphs~$\leftmset H_{F_2}, H'\rightmset$, where $H'$ is the graph induced by the vertices not in $\Span_{G-v}(A')$.
The class~$A'$ is adjacent to at least two vertices of~$F_2$.

Let~$X$ be the set of all vertices satisfying~$G_x \in \mathcal{E}_2$ and $x$ is an outsider of~$A'$ in~$G-v$. 

\begin{claim} \label{claim:recognize:X}
	The multiset~$\leftmset G_x\colon x\in X\rightmset$ is reconstructible.
\end{claim}
\noindent $\ulcorner$ 
Note first that the distance between~$A_1$ and a vertex~$x\notin \Span(A_1)$ is the same in~$G$ and in~$G-v$.

Next we argue that in~$G$ (and thus~$G-v)$ the vertices in~$X$ are exactly the vertices whose distance from~$A_1$ is maximal: by Lemma~\ref{lem: outsider-in-interval-rep}, we may assume that $X = \{v\colon r_v < \min\{\ell_u\colon u \in N_{G-v}(\Span(A')) \}  \}$ in some tidy interval representation of~$G-v$. Thus for every shortest path of maximum length the penultimate vertex is in~$A'$ and the last vertex is in~$X$.

We conclude that the set~$X$ consists exactly of those vertices~$y$ for which 
\begin{enumerate}
\item $M(y)\coloneqq \max\{d_{G-y}(A_1,w)\colon w\in V(G)\}$ is as small as possible, and\label{prop:small:diam},
\item among the cards satisfying Property~\ref{prop:small:diam}, we have that $|\{u\colon d_{G-y}(A_1,u)= M(y)\}|$ is as small as possible.
\end{enumerate}

We conclude that~$X$ can be recognized.\hfill~$\lrcorner$

Fix a vertex $x \in X$.
In~$G-x$ there is one twin equivalence class of $\Delta(G-x)-$vertices. That class is~$A_1$. If there is a non-neighborhood-contained bordering class of degree~$\Delta(G)-2$, then that class is~$A'$.
We can thus assume that $A'$ is neighborhood-contained by another class of degree~$\Delta(G)-1$ in~$G-x$, say~$\widetilde{A}$.
The class~$\widetilde{A}$ dominates at most two equivalence classes of vertices of degree~$\Delta(G)-2$. One of them is~$A'$. Let us call the other one~$A''$. 
There is exactly one vertex~$w'$ adjacent to~$\widetilde{A}$ but not~$A'$.
Similarly, there is exactly one vertex~$w''$  adjacent to~$\widetilde{A}$ but not~$A''$.
The vertex~$w'$ has a neighbor outside of~$\Span_{G-x}(\widetilde{A})$ while~$w''$ does not. We can use this to distinguish~$A'$ from~$A''$. Overall, we can reconstruct~$A'$, thereby reconstruct~$H'$ and thus~$H_{F_2}$.

Overall, we have reconstructed~$(H_{\NTwo{G}{F_2}}, H_{F_2})$ and the result follows with Lemma~\ref{lem:reconst:clique:sep:implies:reconst}.
\end{proof}

\begin{lemma}\label{lem:one:flank:size:one:reconstruc}
If~$|F_1|=0$ and~$|F_2|=1$, then~$G$ is reconstructible.
\end{lemma}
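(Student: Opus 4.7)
My plan is to follow the template of Lemma~\ref{lem:one:flank:size:more:than:one:reconstruc}: identify a clean clique separation of $G$, reconstruct the two annotated induced subgraphs that come from it, and then invoke Reconstruction-by-Separation (Lemma~\ref{lem:reconst:clique:sep:implies:reconst}). The natural separation is $(B_2, R_2, \{f\})$ with $R_2 = N_G(f)$ and $B_2 = V(G) \setminus N_G[f]$; this is clean and clique by Lemma~\ref{lem:tilde:sets:give:clean:clique:sep}, applied in a tidy interval representation in which $f$ is extremal (which exists because $|F_1| = 0$ forces $F_2 = \{f\}$ to one end).

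The easy half is $H_{F_2} = G[\{f\} \cup N_G(f)]$. The card $G_f$ is identifiable as the unique element of $\mathcal{E}_2$ (Lemma~\ref{lem: E-recognizable}, Lemma~\ref{lem:each:flank:represented:in:E}), so $\deg_G(f)$ and the multiset $\leftmset \deg_G(u) \colon u \in N_G(f) \rightmset$ are reconstructible (Lemma~\ref{lem: reconstruct-degree-sequence}). Since $N_G(f) \cup \{f\}$ is a clique in $G$, the annotated graph $H_{F_2}$ is a clique on $\deg_G(f) + 1$ vertices with the above multiset as annotations on the non-$f$ vertices, which determines it up to isomorphism.

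The hard half is $H_{B_2}$: its underlying graph is $G - f \cong G_f$, already in hand, and the task is to mark $N_G(f)$ in $G_f$ up to automorphism. I plan to use an outsider card. By Lemma~\ref{lem:anonempty-ononempty} the class $O_1$ is nonempty, and the corresponding cards are identifiable through Lemma~\ref{lemma:find:Q2} and Lemma~\ref{lem:Q1:reconstructible}. Fix $o \in O_1$. If $|O_1| = 1$, Lemma~\ref{lem:one:flank:means:max:deg:rivet} guarantees $F(G - o) = \{f\}$, so $f$ is the unique flank vertex of $G_o$ and $N_G(f) = N_{G-o}(f)$ is legible directly; in the case $|O_1| \geq 2$, Lemma~\ref{lem: steal vertex from outsider class} says $G - o$ has a second flank $O_1 \setminus \{o\}$, and for $|O_1| \neq 2$ the size-$1$ flank of $G_o$ must be $\{f\}$. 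The marking is transferred to $G_f$ via the common induced subgraph $G - o - f$: since $A_1$ is a clique (its members are pairwise false twins by the twin lemma for max-degree vertices in interval graphs) and $f$ is non-adjacent to $A_1$, we have $\nucl_G(A_1) = A_1$ and hence $\Span_{G_f}(A_1) = \Span_G(A_1)$, so $O_1(G) \subseteq O_1(G_f)$ and a suitable $o' \in O_1(G_f)$ can play the role of $o$ on the $G_f$ side; the choice is irrelevant up to an automorphism of $G_f$ preserving the annotation class.

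With $H_{F_2}$ and $H_{B_2}$ in hand, Lemma~\ref{lem:reconst:clique:sep:implies:reconst} concludes the proof. The main obstacle I expect is the edge case $|O_1| = 2$, where both flanks of $G_o$ are singletons and cannot be distinguished by size alone (so one must compare degrees and, if necessary, cross-reference both cards in $\{G_{o_1}, G_{o_2}\}$ with $G_f$ by looking for an invariant shared across the cards); further degenerate subcases, such as $\maxdeg(G) = |V(G)| - 2$ or a universal vertex appearing in $G_f$, will need direct ad hoc reconstructions of $G$ in the spirit of Case~2 and Case~3 of Lemma~\ref{lem:no:flanks:reconstruct} and Cases~4 and~5 of Lemma~\ref{lem:two:flanks:reconstruc}, which is where the bulk of the case-analysis work will sit.
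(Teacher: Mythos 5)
Your plan breaks at the step you treat as routine: reconstructing the annotated graph $H_{B_2}$, i.e.\ marking $N_G(f)$ inside the card $G_f$. With the separation $(B_2,N_G(f),\{f\})$ the side containing $f$ has exactly one vertex, so the Distant Vertex Lemma (Lemma~\ref{lemma:farthest:away:and:at:least:two:reconstruct:other:side}, which requires $|B_j|\geq 2$) is unavailable, and the ``transfer via the common induced subgraph $G-o-f$'' does not replace it. From an outsider card $G_o$ you can indeed obtain $\bigl(G-o-f,\,N_G(f)\bigr)$ up to isomorphism, but to pass to $\bigl(G-f,\,N_G(f)\bigr)$ you must show that every isomorphism from $G-f-o'$ (for your chosen $o'$) to $G-o-f$ pulls the marking back to the \emph{same} annotation class of $G-f$. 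That is exactly the re-insertion problem: automorphisms of $G-o-f$ need not extend to $G-f$, so two such pullbacks can be inequivalent as markings of $G-f$ even though they agree on the smaller graph. In effect you have reduced the lemma to ``re-insert the deleted vertex $o$ into a card,'' which is the reconstruction problem itself; the Distant Vertex Lemma is precisely the paper's device for controlling this ambiguity (via the minimality of $N(b)\cap C$ and the twin argument inside $C$), and its hypothesis fails here. This is why the paper never cuts at $f$: it cuts elsewhere so that both sides are large enough or the re-insertion is explicit --- at a side of a bordering class of $Z=G-\{a,x\}$ in Case~1.a, along a maximal degree-2 path in Case~1.b, and at $\partition_G(O_1^*)$, $\partition_G(F_2\cup\widehat{O_2})$, or $\partition_G(O_1\cup O_1^+)$ in Case~2.

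A second, independent problem is the case $\Vmaxdeg(G)=A_1$ (a single twin class of maximum-degree vertices), which is the bulk of the paper's proof (its Case~1). There Lemma~\ref{lem: steal vertex from outsider class} does not apply to $G-o$ because its hypothesis $\Vmaxdeg(G)\neq A_i$ fails: deleting an outsider drops the maximum degree, the flank structure of the card is computed relative to the new maximum degree, and $f$ need not appear as a flank vertex of $G_o$ at all --- so even locating $f$ and $N_G(f)$ in the outsider card, which your argument takes as given, needs a separate treatment. The part of your proposal that is sound is $H_{F_2}$: since $N_G[f]$ is a clique, that annotated graph is determined by $\deg_G(f)$ and $\leftmset\deg_G(u)\colon u\in N_G(f)\rightmset$, both strongly reconstructible from $G_f$, which is identifiable as the unique card in $\mathcal{E}_2$. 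But the other half of the separation is where all the work lies, and it is not done.
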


\begin{proof} By the assumptions of this lemma, we have $|F_G| = |F_2|  = 1$, say $F_2 = \{f_2\}$. By Lemma~\ref{lem: E-recognizable} and Lemma~\ref{lem:each:flank:represented:in:E}, we can reconstruct $G_{f_2}$.
There is precisely one non-empty outsider class of $G$, say $O_1 = O(G)$, by Lemma~\ref{lem:extreal:max:deg:vertices} and Lemma~\ref{lem:anonempty-ononempty}.  
	
	\medskip
	\noindent
\textbf{Case 1. $\mathbf{G}$ has only one twin-equivalence class $\mathbf{A}$ of $\mathbf{\Delta(G)}$-vertices.}
Let $a$ be in $A$. If~$|A|>1$, then we can reconstruct $G$ as follows: 
Note first that the vertices in $A\setminus \{a\}$ form the unique twin equivalence class of $(\Delta(G)-1)$-vertices in $G-a$. Hence, we can reconstruct $G$ from~$G_a$ by duplicating a $(\Delta(G)-1)$-vertex.

If~$G_a$ is not connected, then we can reconstruct $G$ as follows: Let $C_1, C_2, \dots, C_k$ be the components of~$G_a$.
Note that $A$ is the unique class of $\Delta(G)$-vertices in $G-f_2$ (since $f_2$ is a flank vertex). In particular, we can reconstruct $G-\{a, f_2\}$ from $G_{f_2}$.
This allows us to reconstruct the isomorphism type of the component of $G-a$ containing $f_2$, w.l.o.g., it is $C_k$. Further, we can locate~$f_2$ inside the component\footnote{This formally means we can reconstruct $[(C_k,f_2)]_{\cong}$.}.
We can reconstruct $G$ from the disjoint union of $C_1, C_2, \dots, C_{k-1}, C_k$ joining $a$ to every vertex of $C_1, C_2, \dots, C_{k-1}, C_k$.

We may assume from now on that~$A= \{a\}$ and~$G_a$ is connected.
By Lemma~\ref{lem:one:flank:means:max:deg:rivet}, we know that~$|O_1|>1$.

\medskip
\noindent \textbf{Case 1.a. $\mathbf{|O_1|=2}$.}
 We prove that there is a $(\Delta(G)-1)$-vertex in $N_G(f_2)$.
 All vertices in~$\Span_G(A)\setminus (O_1)$ have a neighbor in~$N_G(\Span_G(A))$ by the definition of outsiders. Furthermore, $N_G(\Span_G(A)) = N_G(f_2)$.
 From Lemma~\ref{lem:flanks:create:clean:clique:sep}, we conclude that $\partition_G(\Span_G(A)) = (\Span_G(A), N_G(f_2), \{f_2\})$ is a clean clique separation and, hence, has linearly ordered neighborhoods.
 Thus, there is a vertex $x \in N_G(f_2)$ adjacent to every vertex of $\Span_G(A)\setminus (O)$. Altogether $N_G(x) =  \Span_G(A)\setminus O \cup N_G(\Span_G(A))\setminus \{x\} \cup \{f_2\} = (N_G[\Span_G(A)] \cup \{f_2\}) \setminus (O \cup \{x\}) $. Since $|N_G[\Span_G(A)] \cup \{f_2\}| = \maxdeg(G) + 2$, we obtain
 $\deg_G(x) = \maxdeg(G)-1$.

Let~$X$ be the twin equivalence class of~$x$. 
If~$|X|>1$, then we reconstruct $G$ by duplicating a non-neighborhood-contained $(\Delta(G)-2)$-vertex of $G_x$. So we assume~$|X|=1$.

\medskip
If~$G-\{a,x\}$ is  not connected, then we can reconstruct $G$ as follows: We determine the isomorphism types of the connected components of~$G-\{a,x\}$
 together with the information which vertices are adjacent to~$a$ or~$x$ by inspecting cards as follows: In~$G_x$, we locate~$a$ as the only $(\Delta(G)-1)$-vertex. We then locate~$f_2$ as the only vertex not adjacent to~$a$. This means we can determine the isomorphism types of all connected components of~$G-\{a,x\}$ and in the component containing~$f_2$, we can also locate the vertex~$f_2$.
 \begin{itemize}
 
 \item Assume that~$O_1=\{o,o'\}$ with~$\deg_{G}(o)\leq \deg_{G}(o')$. Note that if~$\deg_{G}(o)= \deg_{G}(o')$, then~$o$ and~$o'$ are twins.
 In~$G_{o}$, let~$M$ be the vertices of degree~$\Delta(G)-1$. We can, in~$G_{o}$ locate exactly two connected components of~$G_{o}-M$
  each containing a special vertex that is the only one not adjacent to all vertices of degree~$\Delta(G)-1$. These two special vertices correspond to~$f_2$ and~$o'$. 
 Since we know the isomorphism type of the connected component of~$f_2$ in~$G-\{a,x\}$ (including being able to find the vertex), we can thus determine the isomorphism type of the connected component of~$o'$ in~$G-\{a,x,o\}$. If~$\deg_{G}(o)= |A_1|$ then~$o$ is an isolated vertex in~$G-\{a,x\}$ and we are done. Otherwise, we add~$o$ to the connected component of~$o'$ in~$G-\{a,x,o\}$ by the usual technique of increasing the degree of suitable vertices adjacent to~$o'$ by exactly 1. 
\end{itemize}
We argue that there are no other $(\maxdeg(G)-1)$-vertices.
There can be only two other such types. One such type is adjacent to all vertices except~$f_2$ and one vertex~$o$ from~$O_1$. If it exists, then in~$G_o$, we can distinguish~$f_2$ and the other vertex~$o'$ from~$O_1$. We can thus reconstruct the graph since we have found the vertex~$o'$ that is the neighbor of the extremal vertex~$o$ that was removed.
The other type of $(\maxdeg(G)-1)$-vertex that could exist has~$f_2$ and some other vertex~$f^+$ 
 as non-neighbors. In this case in~$G_{f_2}$, we can find~$f^+$ and then reconstruct the graph.

\medskip
Now we can assume that $Z\coloneqq G-\{a,x\}$ is connected. In $Z$ the vertex~$f\in F$ is adjacent to exactly one non-neighborhood-contained equivalence class of vertices~$Z_f$. 
In $Z$, let~$D$ be the equivalence class of $\Delta(Z)$-vertices closest to~$Z_f$ in the, up to reflection, well defined order of non-neighborhood-contained twin equivalence classes. 

First assume that~$D\neq Z$. Note that we can reconstruct whether~$D\neq Z$ since we can locate~$f$ and~$x$ in~$G_a$.
Let~$S$ be the side of~$D$ that contains~$f$ (which must exist, as otherwise~$D=Z$). 
We obtain a clean clique separation~$\partition_Z(S)$.
This separation extends to a separation~$(S,N_{Z}(S)\cup \{a,x\},\NTwo{Z}{S})$ of~$G$. We obtain~$H_{S}$ from~$G_x$ since we can locate~$f$ and since~$x$ is adjacent to all vertices of~$S\cup N_{Z}(S)$. We can reconstruct~$H_{\NTwo{Z}{S}}$ from~$G_a$ since~$a$ is adjacent to all vertices of~$N_{Z}(S)\cup \{a,x\}\cup \NTwo{Z}{S}$.
Overall, we found a clean clique separation.

If~$D=Z$, the argument is similar except that we use for~$S$ the side of~$D$ that contains~$O_2$ instead of~$f$.

\medskip
\noindent \textbf{Case 1.b. $\mathbf{|O_1|>2}$.}
Let~$x_1,\ldots,x_t$ be a maximal sequence of vertices where~$x_1=f_2$ and for each~$i>1$ the vertex~$x_i$ has degree~$2$ in~$G-a$ (possibly~$t=1$).
If this sequence contains all vertices of~$G-a$, then~$G$ is a path with an added vertex of degree~$n-1$ and is reconstructible.
Otherwise, let~$w$ be a neighbor of~$x_t$ of maximum degree in~$G-a$ and~$w'$ a neighbor of~$x_t$ of minimum degree not in~$\{w,x_{t-1}\}$.

\begin{itemize}
\item If~$N_G[\{x_1,\ldots,x_t,w\}] = V(G)$, then we reconstruct~$G$ as follows:
We find a card~$G_{v}$, with~$v$ a twin of~$w$ or~$w$ itself. Such cards are exactly the cards~$G_v$ with~$v\notin A\cup \{F_2\}$ for which, starting from~$f$, we can find the longest path of degree 2 vertices and~$v$ has maximal degree among them.
We reconstruct~$G$ from~$G_v$ by adding a vertex adjacent to all vertices not on the path of this longest path starting from~$f$ and to the last vertex.
\item If~$N_G[\{x_1,\ldots,x_t,w\}] \neq V(G)$, then we use a card~$G_{v'}$ with~$v'$ a twin of~$w'$ or~$w'$ itself.
Set $V' \coloneqq \{v\colon N_G[v]\subseteq 
N_G[\{x_1,\ldots,x_t,w\}$ and consider the partition $\partition_G(V')$.
 
From $G_{v'}$, we can determine~$H_{\overline{N_G[V']}}$.

Inspecting a card~$G_o$ with~$o\in O_1$, we can locate the vertex~$f\in F_2$ as vertex not adjacent to a vertex of degree~$\Delta(G)-1$. We can thus recover the (unannotated) graph~$H_{V'}$ or determine that~$o\in V'$.
Choosing, among the vertices in~$O_1$ for which~$o\notin V'$, an extremal vertex (i.e., one for which~$\sum_{y\in N(V')} \deg(y)$ is as large as possible), 
we can determine~$H_{V'}$ with annotation.

Overall, we obtain $\leftmset H_{\overline{N_G[V']}}, H_{V'} \rightmset$ and, hence, the graph is reconstructible.
\end{itemize}

\paragraph*{Case 2. $\mathbf{G}$ has more than 1 equivalence class of $\mathbf{\Delta(G)}$-vertices.}

This implies that in~$G-f_2$ there are two classes of outsiders, namely~$O_1$ and a second class, which we call~$\widehat{O_2}$.

\noindent \textbf{Case 2.a.}~$\mathbf{|O_1|> 2}$. In this case, there is a set~$O^*_1 \subseteq V(G)$ containing~$O_1$ such that every graph~$G-o$ with~$o\in O_1$ has two flanks, one of size~$1$ (namely~$F_2$) and the other one being~$O^*_1\setminus o$.
(If the vertices in~$A_1$ have non-equivalent neighbors of maximum degree then~$O^*_1=O_1$, but in other cases~$O^*_1$ may be larger.)

From a card~$G_o$ with~$o\in O_1$ and~$o$ extremal (this means~$\sum_{y\in N_{G-o}(O^*_1\setminus \{o\})} \deg(y)$ as large as possible),
 we can recover~$H_{\NTwo{G}{O^*_1}}$.
For~$f\in F_2$,
we can also recover two graphs from~$G_f$, where one is~$H_{O^*_1}$ and the other is the graph that would be~$H_{O^*_2}$ in~$G-f$. However, the graph~$H_{O^*_2}$ can also be obtained from~$G_o$ by deleting~$f$.
Thus, we can reconstruct~$(H_{O^*_2}, H_{{\NTwo{G}{O^*_2}}})$.

\noindent \textbf{Case 2.b.}~$\mathbf{|O_1|=2}$. In that case, we can reconstruct~$H_{{\NTwo{G}{F_2\cup \widehat{O_2}}}}$ from~$G_f$ or, in case~$|\widehat{O_2}|=2$, we can use~$G_{\widehat{o_2}}$ with~$\widehat{o_2}\in \widehat{O_2}$. Furthermore, from~$G_o$ with~$o\in O_1$ of minimal degree, we can possibly locate~$H_{F_2\cup \widehat{O_2}}$ as annotated graph induced by a flank vertex (namely $f$) and two vertices that become outsiders after deleting that flank vertex. However, there could be another annotated graph~$H'$ obtained similarly by deleting a flank vertex (namely the vertex in~$O_1$ other than~$o$). However, this other annotated graph can be recovered from~$G_f$ (or $G_{\widehat{o_2}}$) as well by deleting the outsider in the class of size~$2$ of smaller degree.
Thus, we can obtain the pair~$(H_{F_2\cup \widehat{O_2}}, H_{{{\NTwo{G}{F_2\cup \widehat{O_2}}}}})$.

\noindent \textbf{Case 2.c.~$\mathbf{|O_1|=1}$ and~$\mathbf{|\widehat{O_2}|\neq 1}$.}
From~$G_o$ with~$o \in O_1$ it is possible to reconstruct~$H_{F_2\cup \widehat{O_2}}$. 
Since~$|\widehat{O_2}|\neq 1$, in~$G_f$, we can distinguish the two classes of outsiders and recover~$H_{{\NTwo{G}{F_2\cup \widehat{O_2}}}}$. 

\noindent \textbf{Case 2.d.~$\mathbf{|O_1|=1}$ and~$\mathbf{|\widehat{O_2}|}= 1$.} Say $O_1= \{o_1\}$. 
If there are more than two twin classes of vertices of maximum degree, then we can consider the class~$O_1^+$ of outsiders in~$G-o_1$. 
We get a triple~$\partition_G( O_1\cup O_1^+)$.
From~$G_{o_1}$, we can extract~$H_{S_1^+}$. From~$G_f$, we can possibly extract~$H_{O_1\cup O_1^+}$ as graph induced by an outsider and an outsider obtained by deleting the outsider. However, it may be the case that there is a second graph~$H'$ that is obtained in this fashion. Nevertheless, this second graph~$H'$ can also be extracted from~$G_{o_1}$. As before, we have reconstructed a clean clique separation of~$G$.

 Finally, suppose there are exactly two twin classes of vertices of maximum degree and~$|O_1|=1$ and~$|\widehat{O_2}|= 1$. We let~$A_1$ be the set of maximum degree neighbors of~$O_1$ and~$\widehat{A_2}$ be the maximum degree neighbors of~$\widehat{O_2}$. 

If~$G-(A_1\cup \widehat{A_2})$ is not connected, then we can reconstruct the graph with the usual technique of considering components up to isomorphism:
For this first note that we can distinguish vertices in~$A_1$ from those in~$\widehat{A_2}$ according to whether they have two sides or one. We can thus also determine from~$G_a$ whether~$a\in A_1$ or~$a\in \widehat{A_2}$.
We can recover the isomorphism type of the connected component that contains~$o_2\in O_2$ and~$f$ by considering~$G_{a_2}$ with~$a_2\in \widehat{A_2}$. Here, in case~$o_2$ and~$f$ are not twins in~$G_{a_2}$, we can also identify~$f$ as the vertex of smaller degree.

We can recover the connected component of~$O_1$ by considering~$G_{a_1}$ with~$a_1\in A_1$: if~$|A_1|>1$, then we can locate~$O_1$ in~$G_{a_1}$. Otherwise if~$|A_1|=1$, then in~$G_{a_1}$ we might not be able to distinguish~$O_1$ from~$f$. However, we already know the isomorphism type of the component that contains~$f$ (and can locate~$f$ in it), so the other component (or either one in case they are isomorphic) yields the isomorphism type of the connected component of~$O_1$.
Overall, we now assume that~$G-(A_1\cup \widehat{A_2})$ is connected.

Consider (similar to the end of Case A.2.) the longest path starting from~$f_2$ with vertices that have degree 2 in~$G-A_1-\widehat{A_2}$. If this covers all of~$G-A_1-\widehat{A_2}$, then~$G$ is reconstructible being a path with two suitably added twin classes. Otherwise define a vertex~$w$ as maximum degree vertex and~$w'$ as minimum degree vertex where we cannot continue anymore.
Note that~$G_o$ with~$o\in O_1$ is reconstructible.
The case follows now from similar arguments to the case where there is only one twin equivalence class of vertices of maximum degree and~$|O_1|>2$.
\end{proof}

\paragraph{Proof of the main theorem.} Having proven all the cases, we conclude with the proof of our main theorem.

\begin{proof}[Proof of Theorem~\ref{main:thm}]
We know that interval graphs are recognizable~\cite{DBLP:journals/dm/Rimscha83}.
For an interval graph~$G$, we can reconstruct the sizes of the (possibly empty) flanks by Lemma~\ref{lem:sizes}. If the graph has two flanks then the graph is reconstructible by Lemma~\ref{lem:two:flanks:reconstruc}. If the graph has no flanks, then it is reconstructible by Lemma~\ref{lem:no:flanks:reconstruct}. If the graph has exactly one flank, then it is reconstructible by Lemmas~\ref{lem:one:flank:size:more:than:one:reconstruc} and~\ref{lem:one:flank:size:one:reconstruc}.
\end{proof}

\textbf{Acknowledgements.} We thank an anonymous reviewer for spotting a mistake in one of our proofs and numerous valuable comments. The research leading to these results has received funding from the European Research Council (ERC) under the European Union’s Horizon 2020 research and innovation programme (EngageS: grant agreement No.\ 820148).

\bibliographystyle{alpha}
\bibliography{main}

\end{document}